\newtheorem{thm}{Theorem}
\newtheorem{cor}[thm]{Corollary}
\newtheorem{lem}[thm]{Lemma}
\newtheorem{prop}[thm]{Proposition}
\newtheorem{defn}[thm]{Definition}
\newtheorem{claim}[thm]{Claim}
\newcommand{\N}{\mathbb{N}}
\newcommand{\R}{\mathbb{R}}
\newcommand{\eps}{\varepsilon}
\newcommand{\eq}[1]{\begin{equation}{#1}\end{equation}}
\newcommand{\mlt}[1]{\begin{multline}{#1}\end{multline}}
\newcommand{\mltt}[1]{\begin{multline*}{#1}\end{multline*}}
\newcommand{\set}[2]{\{{#1}\colon{#2}\}}
\newcommand{\Set}[2]{\Big\{{#1}\colon{#2}\Big\}}
\newcommand{\Leqref}[1]{\stackrel{\scriptscriptstyle{\eqref{#1}}}{\leq}}
\begin{document}

\title[Inner and outer smooth approximation of convex hypersurfaces]{Inner and outer smooth approximation of convex hypersurfaces. When is it possible?}

\author{Daniel Azagra}
\address{Departamento de An{\'a}lisis Matem{\'a}tico y Matem\'atica Aplicada,
Facultad de Ciencias Matem{\'a}ticas, Universidad Complutense, 28040, Madrid, Spain.  
}
\email{azagra@mat.ucm.es}

\author{Dmitriy Stolyarov}
\address{Department of Mathematics and Computer Science,
St. Petersburg State University, and
St. Petersburg Department of Steklov Mathematical Institute,
St. Petersburg,
Russia}
\email{d.m.stolyarov@spbu.ru}

\date{April 15, 2022}

\keywords{convex body, convex hypersurface, fine approximation}

\subjclass[2010]{26B25, 28A75, 41A30, 52A20, 52A27, 53C45}

\begin{abstract}
Let $S$ be a convex hypersurface (the boundary of a closed convex set $V$ with nonempty interior) in $\R^n$. We prove that $S$ contains no lines if and only if for every open set $U\supset S$ there exists a real-analytic convex hypersurface $S_{U} \subset U\cap \textrm{int}(V) $. We also show that $S$ contains no rays if and only if for every open set $U\supset S$ there exists a real-analytic convex hypersurface $S_{U}\subset U\setminus V$. Moreover, in both cases, $S_U$ can be taken strongly convex.  We also establish similar results for convex functions defined on open convex subsets of $\R^n$, completely characterizing the class of convex functions that can be approximated in the $C^0$-fine topology by smooth convex functions from above or from below. We also provide similar results for~$C^1$-fine approximations.
\end{abstract}

\maketitle

\section{Main results}

The main purpose of this paper is to establish the following two results.
\begin{thm}\label{main thm on outer approximation}
Let $S=\partial V$, where $V\subset \R^n$ is a closed convex set with nonempty interior. The following statements are equivalent.
\begin{enumerate}
\item $S$ contains no rays.
\item For every open set $U\supset S$ there exists a real-analytic convex hypersurface $S_{U}\subset U\setminus V$.
\item For every open set $U\supset S$ there exists a real-analytic strongly convex hypersurface $S_{U}\subset U\setminus V$.
\end{enumerate}
\end{thm}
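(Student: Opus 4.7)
The implication $(3) \Rightarrow (2)$ is immediate, so the substance lies in $(2) \Rightarrow (1)$ and $(1) \Rightarrow (3)$.

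For $(2) \Rightarrow (1)$, I argue the contrapositive. Suppose $S$ contains a ray $r(t) = x_0 + tv$, $t \geq 0$. Choose $U$ to be a tubular neighborhood of $S$ whose width tapers to zero at infinity, e.g.\
\[
U = \bigl\{x \in \R^n : d(x, S) < \eta(|x|)\bigr\}, \qquad \eta : [0,\infty) \to (0,\infty),\ \eta(R) \to 0.
\]
Any convex hypersurface $S_U \subset U \setminus V$ bounds a closed convex body $V_U \supsetneq V$. Pick $p \in V_U \setminus V$ and let $P$ be the $2$-plane through $x_0$ containing $r$ and $p$. By convexity, $V_U \cap P$ contains the wedge $\mathrm{co}(\{p\} \cup r)$, whose upper boundary far along $v$ is a ray parallel to $r$ at perpendicular distance $\delta := d(p, \mathrm{aff}(r)) > 0$. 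The boundary of the planar convex region $V_U \cap P$ is contained in $S_U \subset U$, hence lies within $\eta(|x|)$ of $S$; for points far along $v$ this is $< \delta$, contradicting the wedge containment.

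The main content is $(1) \Rightarrow (3)$. Assuming $S$ contains no rays, the construction proceeds in two stages. \emph{Stage 1 (smoothing).} Represent $V$ as the $0$-sublevel set of a convex function, e.g.\ $\tfrac12 d_V^2$, and convolve with a mollifier whose scale $\rho(x)>0$ varies spatially and is chosen small enough (relative to the width of $U$ at $x$) that the smoothed sublevel sets stay inside $V \cup U$. This yields a $C^\infty$ convex function $g$ whose sublevel set $\{g \leq c\}$ for a suitable small $c > 0$ is trapped between $V$ and $V \cup U$. Add a strictly convex correction — whose magnitude is adapted to the recession cone of $V$, using the no-rays hypothesis to ensure it does not protrude beyond $U$ — to make the Hessian positive definite, yielding a $C^\infty$ strongly convex function $\tilde g$ whose sublevel set $W := \{\tilde g \leq c\}$ is a $C^\infty$ strongly convex body with $V \subset W \subset V \cup U$.

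\emph{Stage 2 (analyticity).} Approximate $\tilde g$ by a real-analytic convex function $\tilde g_\omega$ in the Whitney fine topology — for example via Whitney's real-analytic approximation theorem combined with a convex-regularization step as in earlier work of the first author. Since strong convexity is an open condition in the $C^2$-fine topology, $\tilde g_\omega$ remains strongly convex, and $\{\tilde g_\omega = c\}$ is the desired real-analytic strongly convex hypersurface in $U \setminus V$.

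The main obstacle is Stage 1: choosing the mollifier scale and the convex correction consistently with an \emph{arbitrarily} narrow neighborhood $U$, while using the no-rays hypothesis to certify that the curvature needed for strong convexity is compatible with the geometry of $V$ at infinity. Concretely, one must rule out the obstruction exhibited in $(2) \Rightarrow (1)$ globally, not merely pointwise. The analytic upgrade in Stage 2 is then essentially formal.
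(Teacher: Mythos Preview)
Your outline for $(1)\Rightarrow(3)$ has a genuine gap at the heart of Stage~1: convolving a convex function with a mollifier whose scale $\rho(x)$ varies in $x$ does \emph{not} in general produce a convex function. For fixed $y$ in the mollifier integral, the map $x\mapsto x-\rho(x)y$ is affine only when $\rho$ is affine, so $x\mapsto f(x-\rho(x)y)$ need not be convex, and neither need the average. This is exactly the obstruction that makes $C^0$-fine convex approximation nontrivial, and it cannot be repaired by choosing $\rho$ small: the issue is qualitative, not quantitative. Your subsequent ``strictly convex correction adapted to the recession cone'' is too vague to carry the argument; you acknowledge that this is the hard step, but the proposal does not contain the idea needed to resolve it.

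The paper takes a quite different route. It first shows (Lemma~\ref{If S does not contain rays then S is a graph}) that a ray-free unbounded $S$ is, after a rigid motion, the graph of a properly convex function $f\colon U'\subseteq\R^{n-1}\to\R$ whose graph contains no ray; the bounded case is handled via the squared Minkowski functional. This reduces the problem to Theorem~\ref{main thm for lower approximation of convex functions}, whose engine is Theorem~\ref{NoRaysTheorem}: the no-rays condition is equivalent to the statement that supporting affine functions based far away stay uniformly below $f$ on each compact set. From this one builds a rough convex minorant $f-\varepsilon<g<f$ (Corollary~\ref{Just convex approximation from outside]}), and then upgrades to $C^\infty$ strong convexity not by variable-scale mollification but by the \emph{smooth-maximum} gluing of Proposition~\ref{properties of M(f,g)}, which patches together local strongly convex approximants while preserving convexity globally. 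Whitney's theorem gives analyticity at the end, as you say. So your Stage~2 matches the paper, but your Stage~1 substitutes an invalid step for the paper's two key ingredients (the no-rays characterization and the smooth-max gluing).

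A smaller point on $(2)\Rightarrow(1)$: your assertion that any convex hypersurface $S_U\subset U\setminus V$ must bound a body $V_U\supsetneq V$ is not automatic from the hypotheses as literally stated --- a tiny sphere sitting in $U\setminus V$ is a real-analytic strongly convex hypersurface that does not enclose $V$. The paper's sketch (``similar to $(3)\Rightarrow(1)$ of Theorem~\ref{main thm for lower approximation of convex functions}'') implicitly reads the statement in the approximation sense, and your argument does too; just be aware that this step needs either an added hypothesis or a separate justification.
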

\begin{thm}\label{main thm on inner approximation}
Let $S=\partial V$, where $V$ is a closed convex set with nonempty interior in $\R^n$. The following statements are equivalent.
\begin{enumerate}
\item $S$ contains no lines.
\item For every open set $U\supset S$ there exists a real-analytic convex hypersurface $S_{U} \subset U\cap \mathrm{int}(V)$. 
\item For every open set $U\supset S$ there exists a real-analytic strongly convex hypersurface $S_{U} \subset U\cap \mathrm{int}(V)$. 
\end{enumerate} 
\end{thm}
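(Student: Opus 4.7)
The plan is to break the equivalence into three implications, with the constructive $(1) \Rightarrow (3)$ as the technical heart and the other two implications relatively direct. The implication $(3) \Rightarrow (2)$ is immediate, since every strongly convex hypersurface is in particular convex.

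For $(2) \Rightarrow (1)$ I argue by contraposition: assume $S$ contains a line $\ell$ in a direction $v$ and build an open neighborhood $U$ of $S$ admitting no fine real-analytic convex inner approximation. Since $V$ is closed and convex, it is invariant under all translations by $\R v$; writing $\R^n = H \oplus \R v$ with $H = v^\perp$, one has $V = V_0 + \R v$ and $S = \partial V_0 + \R v$ for a closed convex $V_0 \subset H$ with nonempty interior in $H$. Pick a continuous $\epsilon : \R \to (0, \infty)$ with $\epsilon(t) \to 0$ as $|t| \to \infty$ and set
\[
U = \{x + tv : x \in H,\ t \in \R,\ \mathrm{dist}_H(x, \partial V_0) < \epsilon(t)\},
\]
which is an open neighborhood of $S$. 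Reading (2) in the intended fine-approximation sense, namely $S_U = \partial V_U$ for a closed convex $V_U \subset V$ with $V \setminus V_U \subset U$, fix any $x_0 \in \mathrm{int}_H(V_0)$ and set $d_0 = \mathrm{dist}_H(x_0, \partial V_0) > 0$. For every $t$ with $\epsilon(t) < d_0$, the point $x_0 + tv$ lies in $V$ but outside $U$, hence in $V_U$; since this holds for all $|t|$ large enough, the closed convex subset $V_U \cap (x_0 + \R v)$ of the line $x_0 + \R v$ contains two rays to $\pm\infty$ and therefore equals the whole line. Letting $x_0$ range over $\mathrm{int}_H(V_0)$ and using closedness of $V_U$, one gets $V_U \supset V_0 + \R v = V$, so $V_U = V$ and $\partial V_U = S$, contradicting $S_U \subset \mathrm{int}(V)$.

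For $(1) \Rightarrow (3)$, I aim to produce $S_U$ as the zero level set of a real-analytic strongly convex upper approximant of a convex function representing $V$. Specifically, write $V = \{\phi \leq 0\}$ for a convex function $\phi : \R^n \to \R$ with $\{\phi = 0\} = S$ and $\phi$ coercive in every direction outside the recession cone of $V$; this is possible precisely because the no-lines hypothesis is equivalent to pointedness of that recession cone. Given $U \supset S$, produce a real-analytic strongly convex function $\phi_U \geq \phi$ with $\phi_U$ so close to $\phi$, in a spatially adapted way, that $V_U := \{\phi_U \leq 0\}$ is a nonempty closed convex subset of $V$ satisfying $V \setminus V_U \subset U$ and $\{\phi_U = 0\} \subset U \cap \mathrm{int}(V)$. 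Then $S_U := \{\phi_U = 0\}$ is the desired hypersurface; strong convexity of $\phi_U$ makes $\nabla \phi_U$ nonzero on this level set, so $S_U$ is a real-analytic strongly convex hypersurface bounding $V_U \subset V$.

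The technical difficulty lies entirely in producing the upper approximant $\phi_U$ in the $C^0$-fine topology. The natural first move is the Gaussian convolution $G_t \ast \phi$: it is real-analytic, preserves convexity, and majorizes $\phi$ by Jensen's inequality, and adding $\eta |x|^2$ restores strong convexity. A single bandwidth $t > 0$ produces only a globally uniform approximation, whereas fineness forces the error $\phi_U - \phi$ to shrink according to the local thickness of $U$ near $S$. Because real-analytic functions admit no compactly supported partitions of unity, the usual $C^\infty$ patch-and-glue argument is unavailable. The likely remedy is to assemble $\phi_U$ as a convergent series or integral of Gaussian mollifications of $\phi$ at scales locally tuned to the geometry of $U$, with the pointedness of the recession cone of $V$ providing the growth of $\phi$ at infinity required to control the sum. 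This step, and in particular the simultaneous preservation of real-analyticity, strong convexity and the fine upper bound, is where I expect the main effort to lie.
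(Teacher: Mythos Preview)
Your overall architecture matches the paper's: reduce $(1)\Rightarrow(3)$ to producing a $C^0$-fine real-analytic strongly convex \emph{upper} approximant of a convex defining function for $V$, and take a level set. The paper does exactly this with the Minkowski functional $\mu=\mu_V$ (after translating so that $0\in\mathrm{int}(V)$): the no-lines hypothesis on $S$ is equivalent, via Lemma~\ref{mu is essentially coercive}, to $\mu$ being essentially coercive, i.e., to the graph of $\mu$ containing no lines, which is precisely the hypothesis of Theorem~\ref{main thm for upper approximation of convex functions}. That theorem hands you $g$ real-analytic and strongly convex with $\mu<g<\mu+\varepsilon$, and one sets $S_U=g^{-1}(1)$.

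The substantive gap in your proposal is the construction of the approximant itself, which you rightly flag as the crux but only sketch. Your suggested mechanism --- a series or integral of Gaussian mollifications of $\phi$ at spatially tuned scales --- is not how the paper proceeds, and it is not clear it works: once the bandwidth (or the weights) depend on $x$, convexity is no longer automatic, and controlling an infinite analytic sum simultaneously for the fine upper bound and for convergence of complex extensions is delicate. The paper's proof of Theorem~\ref{main thm for upper approximation of convex functions} is structurally different. One first obtains a merely convex upper approximant $f<h<f+\varepsilon$ (Corollary~\ref{Just convex approximation from inside}, resting on the separating-affine-function Theorem~\ref{NoLinesTheorem}); then one approximates uniformly by $C^\infty$ strongly convex functions built from $(n{+}1)$-dimensional corner functions, glued across a compact exhaustion with the smooth maxima $M_\delta$; only at the very end is Whitney's $C^2$-fine analytic approximation theorem applied once to the resulting $C^\infty$ strongly convex function. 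Real-analyticity is not carried through the construction --- it is obtained in a single stroke after strong convexity has been secured in the $C^\infty$ category, which is what makes that last step harmless.

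On $(2)\Rightarrow(1)$: your explicit reinterpretation of $(2)$ in the fine-approximation sense ($S_U=\partial V_U$ with $V_U\subset V$ and $V\setminus V_U\subset U$) is indeed what is meant; read literally, a tiny sphere inside $U\cap\mathrm{int}(V)$ would always witness $(2)$. Under that reading your tube argument is correct and is the geometric counterpart of what the paper points to when it defers to the function-level argument around Lemma~\ref{no lines lemma}.
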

\begin{cor}\label{Corollary for strict convexity}
If $S$ is a strictly convex hypersurface in $\R^n$, then for every open set $U\supset S$ there exist real-analytic strongly convex hypersurfaces $S_I$ and $S_O$ such that $S_{I} \subset U\cap \mathrm{int}(V)$ and $S_{O}\subset U\setminus V$.
\end{cor}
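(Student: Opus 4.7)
The plan is to apply Theorems \ref{main thm on inner approximation} and \ref{main thm on outer approximation} simultaneously, using the observation that strict convexity of $S$ immediately forces both geometric hypotheses to hold. By the standard characterization, $S=\partial V$ is strictly convex precisely when it contains no nontrivial line segment: indeed, if $[x,y]\subset S$ with $x\neq y$, then a supporting hyperplane of $V$ at the midpoint is constant on $[x,y]$, so the whole segment lies in $S$, contradicting strict convexity. In particular $S$ contains neither a doubly infinite line nor a ray, so condition (1) in each of the two theorems is satisfied.

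Given any open set $U\supset S$, applying condition (3) of Theorem \ref{main thm on inner approximation} produces a real-analytic strongly convex hypersurface $S_I\subset U\cap\mathrm{int}(V)$, while applying condition (3) of Theorem \ref{main thm on outer approximation} produces a real-analytic strongly convex hypersurface $S_O\subset U\setminus V$. These are precisely the two approximants the corollary asks for.

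There is essentially no obstacle at this stage: the entire analytic content is packaged inside the two main theorems, and the corollary is a purely formal consequence once one observes that the condition ``$S$ contains no line segments'' is strictly stronger than either ``$S$ contains no rays'' or ``$S$ contains no lines''. The only verification worth isolating, should one want to be completely explicit, is the equivalence between strict convexity of $V$ and the absence of segments in $\partial V$, which is a one-line consequence of the supporting hyperplane theorem as indicated above.
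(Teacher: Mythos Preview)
Your argument is correct and matches the paper's approach exactly: the paper does not give a separate proof of this corollary, treating it as an immediate consequence of Theorems \ref{main thm on outer approximation} and \ref{main thm on inner approximation} once one notes that a strictly convex hypersurface (by the paper's own definition, one containing no line segments) a fortiori contains no rays and no lines. Your first paragraph is slightly more than needed, since the paper \emph{defines} strict convexity of $S$ as the absence of segments, so no supporting-hyperplane argument is required; but this is harmless.
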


Results of this kind are important for extending locally concave functions, which are commonly used as Bellman functions of certain extremal problems in harmonic analysis. Results similar to Corollary~\ref{Corollary for strict convexity} may be found along the lines of Section $4$ in~\cite{StolyarovZatitskiy}. Our initial motivation comes from the need of Corollary~\ref{Corollary for strict convexity} in higher dimensional generalizations of the work of that paper.

The preceding theorems are relatively easy consequences of the following results, which, as we believe, are of independent interest in themselves.

\begin{thm}\label{main thm for lower approximation of convex functions}
Let $U\subseteq\R^n$ be a non-empty open convex set, let $f\colon U\to\R$ be convex. The following statements are equivalent.
\begin{enumerate}
\item The graph of $f$ does not contain any ray.
\item For every continuous function $\varepsilon\colon U\to (0, \infty)$ there exists a real-analytic strongly convex function $g\colon U\to\R$ such that $f-\varepsilon<g<f$.
\item For every continuous function $\varepsilon\colon U\to (0, \infty)$ there exists a convex function $g\colon U\to\R$ such that $f-\varepsilon<g<f$.
\end{enumerate}
\end{thm}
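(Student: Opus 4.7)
The implication $(2)\Rightarrow(3)$ is immediate; the content lies in $(3)\Rightarrow(1)$ and $(1)\Rightarrow(2)$.

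For $(3)\Rightarrow(1)$ I argue by contrapositive. Suppose the graph of $f$ contains a ray, i.e.\ there exist $x_0\in U$ and $v\in\R^n\setminus\{0\}$ with $x_0+tv\in U$ for every $t\ge 0$ (so $\|x_0+tv\|\to\infty$) and $f(x_0+tv)$ affine in $t$. Take the test function $\varepsilon(x)=(1+\|x-x_0\|)^{-1}$, which is continuous and positive on $U$. If a convex $g$ with $f-\varepsilon<g<f$ existed, then $h(t):=g(x_0+tv)-f(x_0+tv)$ would be convex in $t$ (being the difference of a convex function and an affine function along a line) and strictly negative on $[0,\infty)$. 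A convex function on $[0,\infty)$ that is bounded above has nonpositive right derivative and is therefore non-increasing, so $h(t)\le h(0)<0$ for all $t\ge 0$. But the lower bound $g>f-\varepsilon$ forces $h(t)>-\varepsilon(x_0+tv)\to 0$, contradicting $h(t)\le h(0)<0$ for large $t$.

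For $(1)\Rightarrow(2)$ the plan is to construct $g$ in two stages. \emph{Stage A.} Build a $C^\infty$ strongly convex $\tilde g\colon U\to\R$ with $f-\tfrac{1}{2}\varepsilon<\tilde g<f$. Since $U$ is second countable, there is a countable family of affine minorants $\{\ell_n\}$ of $f$ coming from subgradients at a dense subset of $U$, with $f=\sup_n\ell_n$ pointwise. Fix a locally finite cover $\{V_j\}$ of $U$ by small open convex sets; in each $V_j$ select finitely many locally relevant $\ell_n$'s, shift each downward by a small amount tied to $\varepsilon$ and a smoothing parameter, and combine them by a smooth maximum such as a weighted log-sum-exp. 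Patch the local pieces in a convexity-preserving way (for instance, a single global weighted log-sum-exp with weights shaped by a smooth partition of unity). A small smooth strongly convex perturbation, localized so as not to spoil $\tilde g<f$, then enforces pointwise positive Hessian. The hypothesis that the graph of $f$ contains no ray is used here in a quantitative form: it guarantees that for every $x\in U$ and every $\delta>0$ one can find an affine minorant $\ell$ with $f(x)-\delta<\ell(x)<f(x)$, uniformly enough that the downward shifts and smoothing slack can be absorbed into $\tfrac{1}{2}\varepsilon$ throughout $U$, including near $\partial U$ and at infinity. \emph{Stage B.} Upgrade $\tilde g$ to a real-analytic strongly convex $g$ with $f-\varepsilon<g<f$, using the classical density of real-analytic functions in $C^2(U)$ in the Whitney $C^2$-fine topology (Carleman/Whitney-type theorems on open subsets of $\R^n$). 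Since pointwise strong convexity is an open condition in $C^2$-fine and the strict sandwich $f-\varepsilon<g<f$ is open in $C^0$-fine, any sufficiently fine analytic approximation of $\tilde g$ satisfies all required properties.

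The main obstacle is Stage A. Direct smoothing of $f$ by convolution produces a \emph{majorant} (by Jensen's inequality), not a minorant, so the construction must proceed via a smoothed lower envelope of affine minorants of $f$; the delicate point is controlling the smoothing slack so the result stays strictly below $f$ while remaining within $\tfrac{1}{2}\varepsilon$ of it. This is precisely where the no-ray hypothesis is indispensable: if $f$ were affine along some ray, then by the argument for $(3)\Rightarrow(1)$ no convex $g<f$ could approach $f$ within any continuous $\varepsilon\to 0$ along that ray, regardless of the construction.
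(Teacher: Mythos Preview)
Your argument for $(3)\Rightarrow(1)$ treats only unbounded rays: you assume $x_0+tv\in U$ for all $t\ge 0$ and use $\varepsilon(x)=(1+\|x-x_0\|)^{-1}\to 0$ along the ray. In this paper a \emph{ray} is, by convention, also any segment $[x_0,z)$ with $z\in\partial U$ on which $f$ is affine. For such a ray your test function does not tend to $0$ as $t\uparrow T$ (where $x_0+Tv=z$), so the contradiction $h(t)>-\varepsilon(x_0+tv)\to 0$ fails. The paper handles both cases with a single device (Lemma~\ref{no rays lemma}): choose $\varepsilon$ of class $C^1$ on $\R^n$ with $\varepsilon^{-1}(0)=\R^n\setminus U$ (and $\varepsilon\to 0$ at infinity), so that $\varepsilon(x_0+tv)\to 0$ at either type of ray endpoint; then the convexity argument forces $g\ge f$ near that endpoint, contradicting $g<f$.

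For $(1)\Rightarrow(2)$ your Stage~A is a plan rather than a proof, and the two critical steps are left unsubstantiated. First, the sentence ``patch the local pieces in a convexity-preserving way (for instance, a single global weighted log-sum-exp with weights shaped by a smooth partition of unity)'' is not a valid mechanism: multiplying exponentials by nonconstant cutoff weights generally destroys convexity of the log-sum. Second, your description of how the no-ray hypothesis enters (``for every $x$ and $\delta>0$ one can find an affine minorant $\ell$ with $f(x)-\delta<\ell(x)<f(x)$'') is a triviality that holds for every convex $f$ and has nothing to do with rays; what is actually needed is the nontrivial quantitative statement that distant subgradients cannot touch $f$ on a fixed compact set --- this is exactly Theorem~\ref{NoRaysTheorem}, which you neither state nor prove. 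The paper avoids your Stage~A entirely: it first uses Theorem~\ref{NoRaysTheorem} to produce a merely convex $h$ with $f-\varepsilon<h<f$ (Corollary~\ref{Just convex approximation from outside]}), and then, since ``no ray'' implies ``no line'', applies the already-proved upper-approximation Theorem~\ref{main thm for upper approximation of convex functions} to $h$ with tolerance $\delta=f-h$, obtaining a real-analytic strongly convex $g$ with $h<g<h+\delta=f$. Your Stage~B (Whitney $C^2$-fine analytic approximation) matches the paper's final step.
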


\begin{thm}\label{main thm for upper approximation of convex functions}
Let $U\subseteq\R^n$ be a non-empty open convex set, let $f\colon U\to\R$ be convex. The following statements are equivalent.
\begin{enumerate}
\item The graph of $f$ does not contain any line.
\item For every continuous function $\varepsilon\colon U\to (0, \infty)$ there exists a real-analytic strongly convex function $g\colon U\to\R$ such that $f<g<f+\varepsilon$.
\item For every continuous function $\varepsilon\colon U\to (0, \infty)$ there exists a convex function $g\colon U\to\R$ such that $f<g<f+\varepsilon$.
\end{enumerate}
\end{thm}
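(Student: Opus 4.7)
The implication $(2)\Rightarrow(3)$ is trivial. For $(3)\Rightarrow(1)$, my plan is to argue the contrapositive: suppose the graph of $f$ contains a line; its projection is a line $\ell\subset U$ on which $f|_{\ell}$ is affine. I would set $\varepsilon(x):=(1+|x|^{2})^{-1}$, which is positive and continuous on $U$ and satisfies $\varepsilon|_{\ell}\to 0$ in both directions along $\ell$. For any hypothetical convex $g$ with $f<g<f+\varepsilon$, the function $h:=g-f$ would be convex on $\ell\cong\R$ (since $f|_{\ell}$ is affine), strictly positive, and bounded above by $\varepsilon|_{\ell}$. Parametrizing $\ell$ as $t\mapsto x_{0}+tv$ and applying midpoint convexity, $h(x_{0})\leq\tfrac{1}{2}\bigl(h(x_{0}+tv)+h(x_{0}-tv)\bigr)\to 0$ as $t\to\infty$, contradicting $h(x_{0})>0$.

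For the substantive direction $(1)\Rightarrow(2)$, my plan is a two-stage construction. \emph{Stage 1.} I would produce a $C^{\infty}$ strongly convex $g_{1}:U\to\R$ with $f<g_{1}<f+\varepsilon/2$, using an upward mollification $g_{1}(x)=\int f(x+\delta(x)z)\rho(z)\,dz$ with $\rho$ a standard smooth symmetric compactly supported mollifier and $\delta:U\to(0,\infty)$ smooth, chosen to vanish fast enough as $x\to\partial U$ and also small relative to the local modulus of continuity of $f$ so that $g_{1}<f+\varepsilon/2$ pointwise. By Jensen's inequality $g_{1}\geq f$. Convexity is preserved by taking $\delta$ affine on the members of a locally finite convexity-compatible partition of $U$; strong convexity is inherited from the convexity-defect of $f$ along every line (guaranteed by the no-line hypothesis), possibly after a small additional real-analytic positive perturbation adapted to the geometry.

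\emph{Stage 2.} I would upgrade $g_{1}$ to a real-analytic strongly convex $g_{2}$ with $g_{1}<g_{2}<g_{1}+\varepsilon/2$. Using a Whitney-type fine $C^{2}$-analytic approximation theorem on the open set $U\subseteq\R^{n}$, I would obtain a real-analytic $\tilde h$ with $|g_{1}-\tilde h|$ and $\|D^{2}(g_{1}-\tilde h)\|$ pointwise below a prescribed positive analytic function $\eta:U\to(0,\infty)$; choosing $\eta<\varepsilon/4$ pointwise and pointwise less than half the convexity modulus of $g_{1}$, I would set $g_{2}:=\tilde h+2\eta$. Then $g_{2}$ is real-analytic, satisfies $g_{1}<g_{2}<g_{1}+3\eta<g_{1}+\varepsilon/2$, and $D^{2}g_{2}=D^{2}\tilde h+2D^{2}\eta$ remains pointwise positive.

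The main obstacle I anticipate is Stage 1: producing a genuinely global $C^{\infty}$ strongly convex upper approximation of an arbitrary convex $f$ whose pointwise upper bound $f+\varepsilon/2$ may decay to $f$ at infinity along lines contained in $U$. Variable-bandwidth Jensen mollification does not preserve convexity in general, and a global strongly convex analytic perturbation cannot be pointwise small along any line in $U$ in a uniform way. The no-line hypothesis is exactly what is needed: it guarantees that $f$ grows non-trivially along every line in $U$, which is the margin we use to fit both the mollification error and the strong-convexity lift under the $\varepsilon/2$-budget.
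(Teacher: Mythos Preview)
Your implication $(3)\Rightarrow(1)$ is incomplete when $U\neq\R^n$. In the paper's convention a ``line'' in $U$ is any nonempty intersection of $U$ with an affine line, so it may be a bounded segment or a half-line. Your choice $\varepsilon(x)=(1+|x|^2)^{-1}$ does not tend to zero as $x$ approaches a finite boundary point of $U$, so the midpoint-convexity squeeze does not apply. The paper fixes this by taking $\varepsilon$ of class $C^1$ with $\varepsilon^{-1}(0)=\R^n\setminus U$ and then using a one-dimensional lemma (Lemma~\ref{no lines lemma}) that handles finite endpoints via the derivative of $\varepsilon$ there.

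The real gap is Stage~1 of $(1)\Rightarrow(2)$. You correctly identify the obstacle and then do not overcome it: variable-bandwidth mollification $x\mapsto\int f(x+\delta(x)z)\rho(z)\,dz$ does not yield a convex function unless $\delta$ is constant, and your suggestion of making $\delta$ piecewise affine on a ``convexity-compatible partition'' is not a construction---gluing convex pieces does not give a global convex function. Likewise, the remark that strong convexity can be achieved by ``a small additional real-analytic positive perturbation'' contradicts your own (correct) observation that no strongly convex function can be uniformly small along a full line contained in $U$. So Stage~1 as written does not produce any $C^\infty$ strongly convex $g_1$ with $f<g_1<f+\varepsilon/2$. (Stage~2 also has a small defect: setting $g_2=\tilde h+2\eta$ with $\eta$ an analytic tolerance function gives no control on $D^2\eta$, so positivity of $D^2g_2$ is not guaranteed. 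The fix is simply to take $\eta$ smaller than both the $C^0$-gaps and half the least eigenvalue of $D^2g_1$, and set $g_2=\tilde h$; no shift is needed.)

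The paper's route is structurally different and avoids mollification entirely. First (Theorem~\ref{NoLinesTheorem} and Corollary~\ref{Just convex approximation from inside}) it produces a \emph{rough} convex $h$ with $f<h<f+\varepsilon$ by taking the convex hull of the graph of $f+\varepsilon$; the no-lines hypothesis is what makes $h>f$ strict. Then, since $h$ also has no lines in its graph, it is not of the form $c\circ P+\ell$ with $P$ a projection to a lower dimension, and by Theorem~\ref{Uniform approximation by strongly convex functions} it can be \emph{uniformly} approximated from below by $C^\infty$ strongly convex functions. Finally, a smooth-maximum gluing over a compact exhaustion of $U$ produces a global $C^\infty$ strongly convex $g$ with $f<g<f+\varepsilon$, and Whitney's $C^2$-fine analytic approximation upgrades $g$ to real-analytic. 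The key idea you are missing is the first step: getting \emph{any} convex function strictly between $f$ and $f+\varepsilon$ is already nontrivial and uses the no-lines hypothesis in an essential way (via the separation-type Theorem~\ref{NoLinesTheorem}), and once that is done the smoothing is a uniform-approximation problem, not a fine one.
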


In order to avoid any possible ambiguity in the preceding statements, let us fix some definitions. A {\em convex hypersurface} $S$ is the boundary of a convex set with nonempty interior. Such a set $S$ will be called {\em strictly convex} provided that $S$ contains no line segments. Similarly, a convex function is strictly convex if its graph does not contain any line segment. If $U$ is a nonempty convex subset of $\R^n$, we say that a $C^2$ function $f\colon U\to\R$ is {\em strongly convex} whenever $D^2f(x)$ is strictly positive definite for every $x\in U$. A (not necessarily $C^2$) function $\varphi\colon U\to\R$ will be said to be strongly convex if for every $x\in U$ there exist $r_x>0$ and a $C^2$ strongly convex function $\psi_{x}\colon B(x, r_x)\to\R$ such that $\varphi-\psi_x$ is convex on $B(x, r_x)$. A set $S$ will be called a real-analytic convex (resp. strongly convex) hypersurface of $\R^n$ provided that there exists a real-analytic convex (resp. strongly convex) function $g\colon \R^n\to\R$ such that $S=g^{-1}(r)$ for some $r>\inf_{x\in\R^n}g(x)$ (which implies that $Dg(x)\neq 0$ for all $x\in S$).

Let us now explain what we mean by a ray in the case that $U\neq\R^n$.
The phrase {\em the graph of~$f$ contains a ray} will mean, in this paper, that there exists~$x\in U$ and~$e\in \R^d$ such that the restriction of the function~$f$ to the set~$\set{x+t e}{t\in [0,\infty)}\cap U$ is affine. Line segments of the form $[x, z):=\set{(1-t)x+tz}{ t\in [0, 1)}$, where $x\in U$ and $z\in\partial U$, are rays for us. Similarly, in the above results and what follows, in the case $U\neq\R^n$, a line in $U$ will be a nonempty intersection of $U$ with a line in $\R^n$. It is worth noting that, when $U=\R^n$, saying that the graph of a convex function $f\colon \R^n\to\R$ does not contain any line is equivalent to asserting that $f$ is {\em essentially coercive} (which means coercive up to a linear perturbation). This is a consequence of \cite[Lemma 4.2]{Azagra} or \cite[Theorem 1.11]{AzagraMudarra}.

For background about this kind of problems, see \cite{Azagra} and the references therein. In \cite{Azagra} it was proved that every convex function $f\colon U\subseteq\R^n\to\R$ and every $\varepsilon\in (0, \infty)$ there exists a real-analytic convex function $g\colon U\to\R$ such that $|f-g|\leq\varepsilon$. This result is no longer valid in general when the number $\varepsilon$ is replaced with a strictly positive continuous function, although in \cite{Azagra} it was also shown that if $f$ is {\em properly convex}, then the result is still true.\footnote{A function $f\colon U\to\R$ is properly convex provided that $f=\varphi+\ell$, with $\ell$ linear and $\varphi\colon U\to [a, b)$ convex and proper (meaning that $\varphi^{-1}[a, c]$ is compact for every $c\in [a, b)$); here $b\in\R\cup\{\infty\}$.} However, unless $U=\R^n$, proper convexity is not a necessary condition for this kind of approximation. The following result enlarges the class of functions known to admit such approximations, providing a simple geometrical characterization of the class of convex functions (defined on an arbitrary convex and open subset of $\R^n$) that can be approximated in the $C^0$-fine topology by real-analytic strictly convex functions.

\begin{cor}\label{corollary for approximation of convex functions}
Let $U\subseteq\R^n$ be a non-empty open convex set, let $f\colon U\to\R$ be convex. The following statements are equivalent.
\begin{enumerate}
\item The graph of $f$ does not contain any line.
\item For every continuous function $\varepsilon\colon U\to (0, \infty)$ there exists a real-analytic strongly convex function $g\colon U\to\R$ such that $|f-g|<\varepsilon$.
\item For every continuous function $\varepsilon\colon U\to (0, \infty)$ there exists a strictly convex function $g\colon U\to\R$ such that $|f-g|<\varepsilon$.
\end{enumerate}
\end{cor}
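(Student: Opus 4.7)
\noindent\emph{Proof plan.}\ The plan is to reduce the corollary to Theorem~\ref{main thm for upper approximation of convex functions} together with a short one-variable observation; no further construction will be needed, and the only nontrivial input will be the upper-approximation theorem itself.

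For the implication $(1)\Rightarrow(2)$, I will apply Theorem~\ref{main thm for upper approximation of convex functions} to the pair $(f,\varepsilon)$ to obtain a real-analytic strongly convex $g\colon U\to\R$ with $f<g<f+\varepsilon$; the required two-sided bound $|f-g|<\varepsilon$ then follows at once from $0<g-f<\varepsilon$. The implication $(2)\Rightarrow(3)$ is immediate, since a real-analytic strongly convex function has $D^{2}g$ strictly positive definite everywhere, and is therefore strictly convex in the sense of the paper: by Taylor's theorem with remainder, $g(\lambda x+(1-\lambda)y)<\lambda g(x)+(1-\lambda)g(y)$ for all $x\neq y$ in $U$ and $\lambda\in(0,1)$.

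For $(3)\Rightarrow(1)$ I will argue by contrapositive. If the graph of $f$ contains a line, then there exist $x_{0}\in U$ and $v\in\R^{n}\setminus\{0\}$ such that $x_{0}+tv\in U$ for every $t\in\R$ and $t\mapsto f(x_{0}+tv)$ is affine. Choose the constant $\varepsilon\equiv 1$ and assume, towards a contradiction, that some strictly convex $g\colon U\to\R$ satisfies $|f-g|<1$ on $U$. Then $h(t):=g(x_{0}+tv)-f(x_{0}+tv)$ is the composition of the strictly convex function $g$ with an injective affine parametrization, minus an affine function, hence strictly convex on all of $\R$; and yet $|h|<1$. This will produce a contradiction, because a convex function on $\R$ that is bounded above must be constant: if $h(t_{0})<h(t_{1})$ for some $t_{0}<t_{1}$, then monotonicity of the slopes of a convex function forces $h(t)\geq h(t_{1})+\frac{h(t_{1})-h(t_{0})}{t_{1}-t_{0}}(t-t_{1})\to\infty$ as $t\to\infty$, and the case $h(t_{0})>h(t_{1})$ is symmetric. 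The resulting contradiction shows that the graph of $f$ cannot contain a line, completing the plan. I do not expect any substantive obstacle: the entire argument is a bookkeeping reduction to Theorem~\ref{main thm for upper approximation of convex functions} plus the one-line fact that strictly convex functions on $\R$ are unbounded.
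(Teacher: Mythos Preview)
Your argument for $(1)\Rightarrow(2)$ and $(2)\Rightarrow(3)$ is correct and matches the paper's. The gap is in $(3)\Rightarrow(1)$: you have overlooked the paper's convention that, when $U\neq\R^n$, a \emph{line in $U$} is any nonempty set of the form $\{x_0+tv:t\in\R\}\cap U=\{x_0+tv:t\in(a,b)\}$, where $a$ or $b$ may be finite. Your contrapositive assumes $x_0+tv\in U$ for \emph{every} $t\in\R$, which need not hold; and your ``bounded strictly convex on $\R$'' contradiction collapses on a bounded interval or half-line. For instance, on $(0,1)$ the function $h(t)=t^2$ is strictly convex with $|h|<1$, and on $(0,\infty)$ the function $h(t)=e^{-t}$ is strictly convex and bounded. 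So the constant choice $\varepsilon\equiv 1$ cannot detect affinity along such ``lines''.

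The paper handles this by choosing a \emph{non}-constant $\varepsilon$: a $C^1$ function with $\varepsilon^{-1}(0)=\R^n\setminus U$ (and $\varepsilon\to 0$ at infinity). Then along the line one has $|h(t)|<\psi(t):=\varepsilon(x_0+tv)$, where $\psi$ is differentiable, vanishes at the endpoints $a,b$ (finite or infinite), and Lemma~\ref{no lines lemma} forces $h\equiv 0$, contradicting strict convexity of $g$. To repair your argument you must either adopt this choice of $\varepsilon$ and invoke that lemma, or reproduce its content: namely, show that a convex $\varphi$ on $(a,b)$ squeezed between $\pm\psi$ with $\psi$ vanishing (to first order) at the endpoints must be identically zero.
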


Our methods can be tuned to obtain $C^1$-fine approximation of $C^1$ convex functions by real-analytic convex functions. The following result improves \cite[Theorem 1.10]{Azagra}.

\begin{thm}\label{C1 fine result}
Let $U\subseteq\R^n$ be a non-empty open convex set and let $f\colon U\to\R$ be convex and of class $C^1$. The following statements are equivalent.
\begin{enumerate}
\item The graph of $f$ does not contain any line.
\item For every continuous function $\varepsilon\colon U\to (0, \infty)$ there exists a real-analytic strongly convex function $g\colon U\to\R$ such that $|f-g|<\varepsilon$ and $\|Df-Dg\|<\varepsilon$.
\end{enumerate}
\end{thm}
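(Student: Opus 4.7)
The implication $(2)\Rightarrow(1)$ is immediate: the $C^0$ half of $(2)$ already gives statement $(2)$ of Corollary~\ref{corollary for approximation of convex functions}, whence $(1)$ follows. The real content is $(1)\Rightarrow(2)$, and the plan is to reduce it to the $C^0$-fine approximation already secured by Theorem~\ref{main thm for lower approximation of convex functions}. The crucial observation is that for a pair of convex functions, one of which is $C^1$, a sufficiently small $C^0$-fine bound on a ball of radius $r$ around $x$ automatically forces the gradient at $x$ to be correspondingly close, via subgradient inequalities combined with a first-order Taylor expansion.

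More precisely, I would first record a gradient comparison lemma: if $f\colon U\to\R$ is convex and $C^1$, $g\colon U\to\R$ is convex and differentiable, $B(x,r)\subset U$, and $|f-g|\le\eta$ on $B(x,r)$, then
\[
\|Dg(x)-Df(x)\|\ \le\ \omega_{x,r}(r)+\tfrac{2\eta}{r},
\]
where $\omega_{x,r}$ denotes a modulus of continuity of $Df$ on $\overline{B(x,r)}$. The proof combines $g(x\pm h)\ge g(x)\pm\langle Dg(x),h\rangle$ with $|f-g|\le\eta$ and with the Taylor estimate $f(x\pm h)=f(x)\pm\langle Df(x),h\rangle+E$, $|E|\le\omega_{x,r}(|h|)|h|$, to obtain $|\langle Dg(x)-Df(x),h\rangle|\le\omega_{x,r}(|h|)|h|+2\eta$ for all $|h|\le r$; specialisation to $h$ of length $r$ in the direction of $Dg(x)-Df(x)$ yields the stated inequality.

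Next, given a continuous $\varepsilon\colon U\to(0,\infty)$, I would construct continuous positive functions $r,\tilde\varepsilon\colon U\to(0,\infty)$ with $\tilde\varepsilon<\varepsilon$, such that for every $x\in U$ the closed ball $\overline{B(x,r(x))}$ is compactly contained in $U$, the oscillation of $Df$ on it is less than $\varepsilon(x)/2$, and $\tilde\varepsilon(y)\le r(x)\varepsilon(x)/4$ for every $y\in B(x,r(x))$. This is a standard locally finite refinement: cover $U$ by countably many balls $B(x_i,r_i)$ with $\overline{B(x_i,2r_i)}\subset U$ and with $r_i$ so small that $Df$ oscillates on $\overline{B(x_i,2r_i)}$ by less than $\tfrac12\min_{\overline{B(x_i,2r_i)}}\varepsilon$, and take $\tilde\varepsilon$ to be a positive continuous function bounded on each patch by $r_i\cdot\min_{B(x_i,r_i)}\varepsilon/4$, obtained from a partition of unity subordinate to the cover. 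Applying Theorem~\ref{main thm for lower approximation of convex functions} with the error $\tilde\varepsilon$ then produces a real-analytic strongly convex $g$ with $f-\tilde\varepsilon<g<f$; in particular $|f-g|<\varepsilon$ pointwise, and for each $x$ the gradient lemma applied on $B(x,r(x))$ gives
\[
\|Dg(x)-Df(x)\|\ <\ \tfrac{\varepsilon(x)}{2}+\tfrac{\varepsilon(x)}{2}=\varepsilon(x),
\]
completing the proof.

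The main obstacle is the bookkeeping of the previous paragraph: assembling a single continuous positive $\tilde\varepsilon$ that is small enough in every relevant ball simultaneously, while remaining strictly positive even near $\partial U$, where $r(x)$ must shrink to zero and the modulus $\omega_{x,r(x)}$ may blow up. This is standard locally-finite-cover technology but has to be carried out with some care; once it is in place the gradient comparison step is classical, and Theorem~\ref{C1 fine result} follows without any further machinery beyond Theorem~\ref{main thm for lower approximation of convex functions} and Corollary~\ref{corollary for approximation of convex functions}.
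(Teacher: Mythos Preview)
Your overall strategy is sound, and considerably more direct than the paper's, but there is one genuine error: you invoke Theorem~\ref{main thm for lower approximation of convex functions}, whose hypothesis is that the graph of $f$ contains no \emph{ray}, whereas the hypothesis you are given in Theorem~\ref{C1 fine result} is only that the graph contains no \emph{line}. These are not equivalent. For instance, $f(x,y)=y^{2}+\big(\max(0,x)\big)^{2}$ on $\R^{2}$ is $C^{1}$ and convex, its graph contains the ray $\{(t,0,0):t\le 0\}$, yet it contains no line. For such an $f$, Theorem~\ref{main thm for lower approximation of convex functions} (direction $(3)\Rightarrow(1)$) says precisely that you \emph{cannot} find a convex $g$ with $f-\tilde\varepsilon<g<f$ for every continuous $\tilde\varepsilon>0$, so the step ``Applying Theorem~\ref{main thm for lower approximation of convex functions} \dots\ produces a real-analytic strongly convex $g$ with $f-\tilde\varepsilon<g<f$'' fails.

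The fix is immediate: replace that appeal by Theorem~\ref{main thm for upper approximation of convex functions}, whose hypothesis is exactly ``no lines'' and which yields a real-analytic strongly convex $g$ with $f<g<f+\tilde\varepsilon$. Your gradient comparison lemma is symmetric in the sign of $f-g$, so the rest of the argument goes through verbatim. With this correction your proof is complete (modulo the locally-finite bookkeeping you flag, which is indeed routine).

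Once corrected, your route is genuinely different from the paper's. The paper builds an elaborate tower: first a $C^{1}$-fine version of Corollary~\ref{Just convex approximation from inside} (Lemma~\ref{C1approxlemma} and Proposition~\ref{C1 version of just convex approx from inside}), then a sequence of such approximants glued via smooth maxima $M_{\delta}$, with explicit control of first derivatives through Lemma~\ref{estimate of first derivatives of smooth maxima}, and only at the end invokes Whitney. You bypass all the smooth-maximum machinery by observing that a $C^{0}$-fine bound on a convex approximant automatically yields a $C^{1}$-fine bound once one budgets $\tilde\varepsilon$ against the local modulus of continuity of $Df$; this is the quantitative content of Theorem~\ref{automatic convergence of gradients} (Rockafellar, Theorem~25.7), which the paper also uses but only locally, inside Lemma~\ref{C1approxlemma}. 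Your argument is shorter and conceptually cleaner; the paper's has the virtue that each step is constructive and stays within the smooth-maximum toolkit already developed, which may transfer more readily to settings where the gradient-comparison estimate is unavailable.
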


Section~\ref{S2} contains results on approximation of convex functions by other convex functions from below and above; here we do not care about the smoothness of functions. Section~\ref{S3} derives Theorems~\ref{main thm for lower approximation of convex functions},~\ref{main thm for upper approximation of convex functions}, and Corollary \ref{corollary for approximation of convex functions} from the results of Section~\ref{S2} and techniques of~\cite{Azagra}. Section~\ref{S4} contains the proofs Theorems~\ref{main thm on outer approximation} and~\ref{main thm on inner approximation}. The last Section~\ref{S5} is devoted to the proof of Theorem~\ref{C1 fine result}.

\section{Approximation by rough functions}\label{S2}

In the proofs of Theorems \ref{main thm for lower approximation of convex functions} and \ref{main thm for upper approximation of convex functions} we will use the following two theorems (Theorems~\ref{NoRaysTheorem} and~\ref{NoLinesTheorem} below), which we believe to be novel and of independent interest.

\begin{thm}\label{NoRaysTheorem}
Let $U\subseteq\R^n$ be a non-empty open convex set, let $f\colon U\to\R$ be convex.
The graph of~$f$ does not contain any ray if and only if for every compact subset $K$ of $U$ there exists a compact subset $C$ of $U$ such that $K\subset \mathrm{int}(C)$ and, for 
$$
\varphi(x):=\sup\{f(y)+\xi(x-y) \colon y\in U\setminus C, \xi\in\partial f(y)\},\quad x\in U,
$$
we have that
$$
\inf\{f(x)-\varphi(x) \colon x\in K\}>0.
$$
\end{thm}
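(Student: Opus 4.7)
The plan is to prove both directions by contrapositive. For the easy direction, suppose the graph of $f$ contains a ray, i.e.\ there exist $x_0 \in U$ and $e \in \R^n \setminus \{0\}$ such that $f$ is affine with some slope $\sigma$ along $e$ on $R := \{x_0 + te : t \ge 0\} \cap U$. At any point $y = x_0 + t e \in R$ with $t > 0$, every $\xi \in \partial f(y)$ satisfies $\xi(e) = \sigma$, hence $f(y) + \xi(x_0 - y) = f(x_0)$. Since $R$ either escapes to infinity or terminates at a point of $\partial U$, for any compact $C \subset U$ with $x_0 \in \mathrm{int}(C)$ one can choose such a $y$ in $R \setminus C$. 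Taking $K = \{x_0\}$ this yields $\varphi(x_0) \ge f(x_0)$; combined with the subgradient bound $\varphi \le f$, this gives $\varphi(x_0) = f(x_0)$, violating the desired positivity.

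For the hard direction I would again argue contrapositively. Suppose some compact $K \subset U$ admits no good $C$. Exhausting $U$ by compacts $C_m$ with $K \subset \mathrm{int}(C_1)$ and $\bigcup C_m = U$, the failure produces $x_m \in K$, $y_m \in U \setminus C_m$, and $\xi_m \in \partial f(y_m)$ with
\[
\alpha_m := f(x_m) - f(y_m) - \xi_m(x_m - y_m) \to 0
\]
(and $\alpha_m \ge 0$ by the subgradient inequality). Set $t_m := |y_m - x_m|$ and $e_m := (y_m - x_m)/t_m$, and pass to subsequences so that $x_m \to x^\ast \in K$ and $e_m \to e \in S^{n-1}$. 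Since $y_m$ leaves every compact of $U$, either $t_m \to \infty$, or $t_m$ stays bounded and $y_m \to y^\ast \in \partial U$.

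The heart of the proof is a sandwich estimate along $[x_m, y_m]$. Writing $z_m(s) := x_m + s e_m$ for $s \in [0, t_m]$, convexity gives $f(z_m(s)) \le (1 - s/t_m) f(x_m) + (s/t_m) f(y_m)$, while the subgradient inequality at $y_m$ gives $f(z_m(s)) \ge f(y_m) - (t_m - s)\, \xi_m(e_m)$. Using the identity $f(y_m) = f(x_m) + t_m\, \xi_m(e_m) - \alpha_m$, both bounds collapse to
\[
\bigl| f(z_m(s)) - f(x_m) - s\, \xi_m(e_m) \bigr| \le \alpha_m.
\]
Fix any $s > 0$ with $x^\ast + s e \in U$; for large $m$, $s < t_m$ and $z_m(s) \to x^\ast + s e$, so continuity of $f$ forces $s\, \xi_m(e_m) \to f(x^\ast + s e) - f(x^\ast)$. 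Since the quotient $(f(x^\ast + s e) - f(x^\ast))/s$ must be independent of the chosen $s$, $f$ is affine on $\{x^\ast + s e : s \ge 0\} \cap U$, a ray in the sense of the paper, contradicting the hypothesis.

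The main subtlety is the slope $\xi_m(e_m)$, which a priori could blow up in the case $t_m \to \infty$. The sandwich estimate sidesteps this: it forces the products $s\, \xi_m(e_m)$ to converge for each admissible $s$, and dividing by $s$ together with the $s$-independence of the limit simultaneously delivers convergence of $\xi_m(e_m)$ along the subsequence and affinity of $f$ on the limit ray.
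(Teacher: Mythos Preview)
Your argument is correct, and for the hard direction it takes a different, somewhat cleaner route than the paper's. The paper argues by contradiction: assuming no ray, it locates on the limiting direction $v_0$ two auxiliary points $z_0,w_0$ where the secant slopes strictly increase (a ``kink''), transports them to points $z_k,w_k$ along $v_k$, and then runs a chain of inequalities using subgradients at $w_k$ and $y_k$ to force $\limsup_k\big(f(y_k)+\eta_k(x_0-y_k)\big)\le f(x_0)-r\|z_0-x_0\|<f(x_0)$. Your contrapositive instead extracts the affinity directly: the two-sided sandwich $|f(z_m(s))-f(x_m)-s\,\xi_m(e_m)|\le\alpha_m$ packages both the convexity upper bound and the subgradient lower bound in one line, and the $s$-independence of the limit of $\xi_m(e_m)$ immediately gives an affine restriction on the limit ray. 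This avoids the auxiliary points and the long inequality chain. A further advantage of your version is that it keeps the base points $x_m$ variable and only passes to $x^\ast$ in the limit; the paper's write-up starts from a single fixed $x_0$, which strictly speaking requires the kind of subsequence/continuity argument you carry out. Conversely, the paper's kink formulation makes the quantitative gap $r\|z_0-x_0\|$ explicit, which can be convenient if one wants a uniform lower bound rather than a mere contradiction.
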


\noindent Here $\partial f(x)$ stands for the the subdifferential of~$f$ at the point~$x$:
\eq{
\partial f(x) = \Set{L \ \text{is a linear function}}{\forall y\in U \quad f(y) \geq f(x) + L(y-x)},\quad x\in U.
}
Recall that the set~$\partial f(x)$ is non-empty and
\eq{
f(x) = 	\sup\Set{f(y) + L(x-y)}{y\in U, \, L\in \partial f(x)},\quad x\in U.  
}

\begin{proof}[Proof of Theorem \ref{NoRaysTheorem}]
The 'only if' implication is evident, let us prove the 'if' part. If the statement is not true, then there exist $x_0\in U$ and a sequence $(y_k)\subset U$ such that $\lim_{k\to\infty}\|y_k\|=\infty$ or $\lim_{k\to\infty} d(y_k, \partial U)=0$ (if $U\neq\R^n$), and 
\begin{equation}\label{asymptotic line}
f(x_0)=\lim_{k\to\infty} \Big(f(y_k) + \eta_k(x_0-y_k)\Big)
\end{equation}
for some $\eta_k\in\partial f(y_k)$. Denoting $$v_k:=\frac{y_k-x_0}{\|y_k-x_0\|},$$ up to passing to some subsequence, we may assume that $(v_k)$ converges to some $v_0\in\mathbb{S}^{n-1}$. Here and in what follows we use the standard Euclidean norm on~$\R^n$ and denote the unit sphere by~$\mathbb{S}^{n-1}$. Since the graph of $f$ does not contain any ray and $f$ is convex, there exist two points $z_0, w_0\in U\cap \{x_0+t v_0\colon t>0\}$ such that $\|w_0-x_0\|>\|z_0-x_0\|$ and
$$
\frac{f(z_0)-f(x_0)}{\|z_0-x_0\|} <\frac{f(w_0)-f(z_0)}{\|w_0-z_0\|}\leq L(v_0)
$$
for every $L\in\partial f(w_0)$.

Let us define $$w_k:= x_0+ |w_0-x_0|v_k, \,\,\, z_k:=x_0+ |z_0-x_0|v_k.$$
The points $w_k$ and $z_k$ may fall out of $U$ for some $k$, but for all $k$ large enough we have that $w_k, z_k\in U$. Up to extracting a subsequence, we may, thus, assume that $w_k, z_k\in U$ for all $k\in\N$. Let us also set
$$
r_k:=\frac{f(w_k)-f(z_k)}{\|w_k-z_k\|}-\frac{f(z_k)-f(x_0)}{\|z_k-x_0\|},
$$
and choose~$\xi_k\in\partial f(w_k)$ for each $k\in\N$. Note that
\eq{\label{eq5}
\frac{f(w_k) - f(z_k)}{\|w_k-z_k\|} \leq \xi_k(v_k).
}

Since $\lim_{k\to\infty}w_k=w_0$, $\lim_{k\to\infty}z_k=z_0$, and $f$ is continuous, we have that
$$
\lim_{k\to\infty}r_k= r:= \frac{f(w_0)-f(z_0)}{\|w_0-z_0\|}-\frac{f(z_0)-f(x_0)}{\|z_0-x_0\|}>0.
$$
For sufficiently large $k\in\N$, we have $\|y_k-x_0\|>\|w_k-x_0\|$, and by convexity,
$$
\xi_k(v_k)\leq\eta_k(v_k),
$$
and 
\eq{\label{Eq4}
\eta_k(v_k)\geq \frac{f(y_k)-f(w_k)}{\|y_k-w_k\|}.
}
Therefore we have
\begin{eqnarray*}
& & f(y_k)+\eta_k(x_0-y_k)= f(y_k)-\|y_k-x_0\|\eta_k(v_k)=\\
& & f(y_k)-\|y_k-w_k\|\eta_k(v_k)-\|w_k-x_0\|\eta_k(v_k) \Leqref{Eq4} \\
& & f(y_k)-f(y_k)+f(w_k)-\|w_k-x_0\|\eta_k(v_k) \leq \\
& & f(w_k)-\|w_k-x_0\| \xi_k(v_k)= \\
& & f(w_k)-\|w_k-z_k\|\xi_k(v_k)-\|z_k-x_0\|\xi_k(v_k) \Leqref{eq5} \\
& & f(w_k)+f(z_k)-f(w_k)+\|z_k-x_0\|\frac{f(z_k)-f(w_k)}{\|w_k-z_k\|}=\\
& & f(z_k)+\|z_k-x_0\| \left( -r_k -\frac{f(z_k)-f(x_0)}{\|z_k-x_0\|}\right)=\\
& & f(x_0)-r_k \|z_k-x_0\|,
\end{eqnarray*}
which implies
$$
\limsup_{k\to\infty} \Big(f(y_k)+\eta_k(x_0-y_k)\Big)\leq f(x_0)-\|z_0-x_0\|r <f(x_0),
$$
in contradiction to \eqref{asymptotic line}.
\end{proof}

\begin{cor}\label{Just convex approximation from outside]}
Let $U\subseteq\R^n$ be a non-empty open convex set, let $f\colon U\to\R$ be convex.
Let~$\eps\colon U\to \R_+$ be a strictly positive continuous function. Assume that the graph of~$f$ does not contain rays. There exists a convex function~$g\colon U \to \R$ such that
\eq{
f(x) - \eps(x) < g(x) < f(x)
}
for all~$x\in U$.
\end{cor}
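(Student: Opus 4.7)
My plan is to build $g$ as $f$ minus a carefully weighted countable sum of ``defect functions'' $M_j := f - \varphi_j$ coming from Theorem~\ref{NoRaysTheorem} applied to an exhaustion of $U$. A single $\varphi$ from that theorem fails by itself: it satisfies $\varphi \le f$ with a strict gap on $K$, but it equals $f$ on $U \setminus C$ (take $y = x$ in the supremum defining $\varphi$), so no single $\varphi$ can make $g$ strictly less than $f$ throughout $U$. A countable combination supported on an exhaustion will fix this.

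Concretely, fix compacts $K_1 \subset \mathrm{int}(K_2) \subset K_2 \subset \mathrm{int}(K_3) \subset \cdots$ with $\bigcup_j K_j = U$, and apply Theorem~\ref{NoRaysTheorem} to each $K_j$. Since enlarging the compact $C$ in the theorem only decreases the corresponding $\varphi$ while keeping the gap on $K$ positive, I may assume $C_j = K_{j+1}$. This yields convex functions $\varphi_j$ with $\varphi_j = f$ on $U \setminus K_{j+1}$ and $\inf_{K_j}(f - \varphi_j) =: \delta_j > 0$, so $M_j := f - \varphi_j$ is continuous, nonnegative, supported in $K_{j+1}$ with $N_j := \sup_U M_j < \infty$, and $M_j \ge \delta_j$ on $K_j$. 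Setting $\mu_j := \min_{K_{j+1}} \eps > 0$ and
\[
\eta_j := \frac{\min(\mu_j, 1)}{2^{j+2}(1 + N_j)}, \qquad g := f - \sum_{j=1}^{\infty} \eta_j M_j = \Bigl(1 - \sum_j \eta_j\Bigr) f + \sum_j \eta_j \varphi_j,
\]
I then check the two required bounds.

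The inequality $g > f - \eps$ follows because $\eta_j M_j(x) = 0$ unless $x \in K_{j+1}$, in which case $\eps(x) \ge \mu_j$ and $\eta_j M_j(x) \le \mu_j/2^{j+2} \le \eps(x)/2^{j+2}$, so summing yields $\sum_j \eta_j M_j(x) \le \eps(x)/4 < \eps(x)$. The inequality $g < f$ holds since, for $j_0 := \min\{j : x \in K_j\}$, one has $\eta_{j_0} M_{j_0}(x) \ge \eta_{j_0}\delta_{j_0} > 0$. The main obstacle I foresee is justifying that the infinite series defines a convex function: this works because $\sum_j \eta_j \le 1/4 < 1$ makes each finite partial sum a nonnegative combination of the convex functions $f, \varphi_1, \ldots, \varphi_N$, hence convex, and a pointwise limit of convex functions is convex.
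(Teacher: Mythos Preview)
Your proof is correct, but it is considerably more elaborate than the paper's. The paper dispenses with the exhaustion entirely and defines $g$ in one stroke as
\[
g(y) = \sup\Bigl\{ f(x) + L(y-x) - \tfrac{1}{2}\eps(x) : x \in U,\ L \in \partial f(x) \Bigr\},
\]
which is automatically convex as a supremum of affine functions; the bound $g(y) > f(y) - \eps(y)$ is immediate from the choice $x = y$, and the strict inequality $g(y) < f(y)$ follows from a single application of Theorem~\ref{NoRaysTheorem} (with $K = \{y\}$) together with the positivity of $\min_C \eps$ to handle the nearby $x$'s. Your construction instead applies the theorem once per set in an exhaustion and glues the resulting defect functions $M_j$ via summable weights. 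This is a genuinely different route: it trades the paper's slick global formula for an explicit decomposition $f - g = \sum_j \eta_j M_j$ into nonnegative compactly supported pieces, which could be useful if one wanted finer local control over $f - g$, but at the cost of more bookkeeping. One small remark: the claim that you may take $C_j = K_{j+1}$ is not fully justified by ``enlarging $C$ decreases $\varphi$'' alone, since the $C$ produced by the theorem for $K_j$ need not lie inside the pre-chosen $K_{j+1}$; you should either build the exhaustion iteratively (choose $K_{j+1}$ only after seeing $C_j$) or, more simply, work directly with the $C_j$'s and set $\mu_j = \min_{C_j} \eps$, $N_j = \max_{C_j} M_j$ --- the remainder of your argument goes through unchanged.
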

The strict sign in the second inequality is important. 
\begin{proof}
Let us construct~$g$ with the formula
\eq{
g(y) = \sup\Set{f(x)+ L(y-x) - \frac{1}{2}\eps(x)}{x\in U,\quad L_x \in \partial f(x)}.
}
The function~$g$ is clearly convex. Plugging~$x := y$ into this formula, we get~$g(y) \geq f(y) - \frac{1}{2}\eps(y)>f(y)-\varepsilon(y)$. The inequality~$g(y) < f(y)$ follows from Theorem~\ref{NoRaysTheorem} and the continuity of~$\eps$.
\end{proof}

\begin{thm}\label{NoLinesTheorem}
The graph of~$f$ does not contain lines if and only if for any~$x\in U$ there exists a compact set~$C_x\subset U$ and an affine function~$A_{x}$ such that~$f(x) < A_{x}(x)$, however,
$f(y) > A_{x}(y)$ provided~$y\in U\setminus C_x$.
\end{thm}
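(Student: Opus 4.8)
The plan is to mirror the structure of the proof of Theorem~\ref{NoRaysTheorem}, but now exploiting the absence of full lines rather than the absence of rays. The ``if'' direction is again essentially immediate: if the graph of $f$ contained a line, say $f$ restricted to $\{x_0+tv\colon t\in\R\}\cap U$ were affine with slope $\ell$ in the direction $v$, then for the affine function $A_{x_0}$ furnished by the hypothesis we would have $A_{x_0}(x_0)>f(x_0)$, while $A_{x_0}\le f$ would be forced to fail on a set reaching out to $\partial U$ (or to infinity) in \emph{both} the $+v$ and $-v$ directions; but an affine function lying strictly above an affine function at one point cannot lie below it far away on a whole line, and by convexity $f$ equals that affine function along the line, so $f>A_{x_0}$ outside a compact set is impossible. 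I would spell this out by noting that $g(t):=f(x_0+tv)-A_{x_0}(x_0+tv)$ is convex in $t$, negative for $|t|$ large, hence negative on an interval, contradicting $g(0)<0$... wait, rather: $g$ is convex, $g(0)<0$, and $g(t)>0$ for $t$ outside a bounded set on both sides --- but a convex function that is positive for $t\to+\infty$ and $t\to-\infty$ and negative somewhere is fine; the actual contradiction uses that $f$ is \emph{affine} along the line, so $g$ is affine, and an affine function cannot be negative at $0$ and positive for both $t\to\pm\infty$. That kills the ``if'' direction.

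For the ``only if'' direction, fix $x\in U$; I want to produce a compact $C_x$ and an affine $A_x$ with $A_x(x)>f(x)$ but $A_x<f$ on $U\setminus C_x$. The natural candidate is $A_x(y):=f(x)+\delta+L_0(y-x)$ for a suitable slope $L_0$ and small $\delta>0$, where $L_0$ should be chosen ``centrally'' in the subdifferential $\partial f(x)$ --- intuitively, as a direction that makes $f$ grow in every direction away from $x$. Concretely, I would argue by contradiction: suppose no such pair exists. Then for every affine $A$ with $A(x)>f(x)$, the set $\{y\in U\colon A(y)\ge f(y)\}$ is noncompact, i.e.\ it contains a sequence $y_k$ with $\|y_k\|\to\infty$ or $d(y_k,\partial U)\to 0$. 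Normalizing $v_k:=(y_k-x)/\|y_k-x\|\to v_0\in\mathbb S^{n-1}$ and passing to subsequences, one extracts an asymptotic direction $v_0$ in which $f$ grows at most linearly with a controlled slope. The key is to do this simultaneously for a family of affine functions $A$ whose slopes fill out a neighborhood of some $L_0$; since the graph contains no line, for each direction $v$ the two one-sided asymptotic slopes of $f$ (toward $+v$ and $-v$) satisfy a \emph{strict} inequality $f'_\infty(x;-v) < -f'_\infty(x;v)$ in the appropriate sense (this is where ``no lines'' enters, exactly as ``no rays'' entered the previous proof), so one can pick $L_0\in\partial f(x)$ and $\delta>0$ so that $A_x=f(x)+\delta+L_0(\cdot-x)$ beats $f$ in every asymptotic direction, hence on the complement of some compact set by a compactness-of-the-sphere argument.

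I expect the main obstacle to be making the ``every direction simultaneously'' step rigorous: a single affine function $A_x$ must dominate $f$ outside \emph{one fixed} compact set, uniformly over all asymptotic directions $v_0\in\mathbb S^{n-1}$, whereas the contradiction hypothesis only gives, for each $A$, \emph{some} bad direction. The clean way around this is probably to run the argument contrapositively in the style of Theorem~\ref{NoRaysTheorem}: assume the conclusion fails for the specific candidate $A_x$ built from a well-chosen $L_0\in\partial f(x)$ and every $\delta>0$; get a sequence $y_k\to\infty$ (or to $\partial U$) with $f(y_k)\le A_x(y_k)$; pass to a limiting direction $v_0$; and then, using convexity together with the defining inequality of $L_0$ (namely $L_0(v_0)\le$ the asymptotic slope of $f$ in direction $-v_0$, strictly, by no-lines), derive $\liminf_k\bigl(f(y_k)-A_x(y_k)\bigr)>0$, contradicting $f(y_k)\le A_x(y_k)$. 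The delicate point is the correct choice of $L_0$ so that this strict inequality holds in \emph{all} directions at once; I would obtain it by a separation/compactness argument on $\mathbb S^{n-1}$ applied to the sublinear function $v\mapsto f'_\infty(x;v)$ (the asymptotic directional growth rate of $f$), whose non-containment of lines translates precisely into $f'_\infty(x;v)+f'_\infty(x;-v)>0$ for all $v$, guaranteeing a strictly separating linear functional $L_0$.
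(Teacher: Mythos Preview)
Your ``if'' direction is fine once you reach the self-correction: since $f$ is affine along the hypothetical line, the function $t\mapsto f(x_0+tv)-A_{x_0}(x_0+tv)$ is itself affine, negative at $t=0$, and therefore cannot be positive near both ends of the interval $\{t:x_0+tv\in U\}$.

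For the ``only if'' direction your strategy is genuinely different from the paper's, and as written it has a real gap. Everything rests on the claim that the asymptotic directional growth $p(v)=f'_\infty(x;v)$ is \emph{sublinear}, so that a strictly separating linear functional $L_0$ exists by standard convex-analysis. This is correct when $U=\R^n$: then $p$ is the recession function of $f$, genuinely sublinear, the set $\{L:L\le p\}$ has nonempty interior precisely because $p(v)+p(-v)>0$, and for any $L_0$ in that interior the convex function $g=f-f(x)-L_0(\cdot-x)$ has all sublevel sets bounded, which finishes the job. But for a proper open convex $U$ the only sensible meaning of $f'_\infty(x;v)$ is the slope of $t\mapsto f(x+tv)$ at the exit time from $U$, and this is \emph{not} sublinear in general. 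Concretely, take $U$ the open unit disk in $\R^2$, $x=0$, and $f(y)=y_1^2+\varepsilon y_2^2$ with $0<\varepsilon<\sqrt2-1$; this $f$ is strictly convex, so its graph contains no lines, yet $p(v)=(v_1^2+\varepsilon v_2^2)/\|v\|$ satisfies $p(1,1)+p(1,-1)=(1+\varepsilon)\sqrt2<2=p(2,0)$, so subadditivity fails. Without sublinearity your separation/compactness step on $\mathbb{S}^{n-1}$ does not go through, and even if a suitable $L_0$ happens to exist you still owe a separate argument that the sublevel set $\{f\le A_x\}$ stays away from $\partial U$.

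The paper avoids asymptotic slopes altogether. It fixes one $L\in\partial f(x)$, looks at the contact set $V=\{y\in U:f(y)=f(x)+L(y-x)\}$ and the set $V_\infty\subset U$ of rays in $V$ emanating from $x$, shows (Lemma~\ref{ConeConvexity}) that $V_\infty$ is convex whenever $V$ contains no lines, and then separates the closed cone generated by $V_\infty$ from the origin by a hyperplane (Theorem~\ref{SeparationTheorem}). Tilting the tangent functional $L$ slightly toward the normal of that hyperplane yields $A_x$, and an explicit star-shaped construction furnishes the compact $C_x\subset U$. Your recession-function idea essentially rediscovers this picture when $U=\R^n$ (the zero set of $p$ then coincides with the asymptotic cone of $V$), but it is the paper's geometric route through $V_\infty$ that handles the bounded-$U$ case.
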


The proof of the theorem will take some time. 

Let~$L$ be a function in the subdifferential of~$f$ at~$x$. Consider the set
\eq{\label{VDef}
V = \set{y\in U}{f(y) = f(x) + L(y-x)}.
}
Then~$V$ is a relatively closed convex subset of~$U$ (of course,~$V$ is not necessarily closed as a subset of~$\R^n$). Let~$V_\infty$ be another set,
\eq{\label{VInftyDef}
V_{\infty} = \Set{y\in U}{\exists \text{ a ray } [x,z)\subset V \text{ such that } y\in [x,z)}.
}
As usual, by a ray we understand either a classical ray (then~$z$ is an infinite point) or the segment~$[x,z)$ with~$z\in \partial U$. The set~$V_\infty$ is relatively closed in~$U$. However, in general situation, it might be non-convex.

\begin{lem}\label{ConeConvexity}
Assume~$V$ does not contain lines. Then~$V_\infty$ is convex.
\end{lem}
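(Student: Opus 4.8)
The plan is to take two points $p, q \in V_\infty$ and show the segment $[p,q]$ lies in $V_\infty$; since $V_\infty$ is relatively closed in $U$, it suffices to do this for $p, q$ in the interior (along their respective rays) and then pass to the limit. By definition there are rays $[x,z_1) \subset V$ and $[x,z_2) \subset V$ with $p \in [x,z_1)$ and $q \in [x,z_2)$. The key observation is that $V$, being a relatively closed convex subset of $U$ on which $f$ agrees with the affine function $f(x) + L(\cdot - x)$, is itself ``ray-complete from $x$'' in the following sense: if $y \in V$ then the whole segment $[x,y]$ lies in $V$ (convexity), and moreover the question is whether the ray from $x$ through $y$ stays in $V$ as far as it stays in $U$. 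First I would reduce to understanding the recession behaviour: let $r_1 = \lim_{t\to\infty}(z_1 - x)/\|z_1-x\|$ type direction (or the direction toward $z_1 \in \partial U$) and similarly $r_2$ for the second ray. The directions $r_1, r_2$ are, loosely speaking, ``recession directions of $V$ based at $x$''.

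Next I would argue that for \emph{any} point $y$ in the relative interior of the convex set $\mathrm{conv}(\{x\} \cup [x,z_1) \cup [x,z_2))$, the ray from $x$ through $y$ is contained in $V$. The mechanism: $f$ restricted to $[x, z_1)$ is affine (equal to $A := f(x) + L(\cdot - x)$), and likewise on $[x, z_2)$; so on the two-dimensional ``sector'' spanned by these two segments, $f$ is pinched between the affine function $A$ (from below, by the subgradient inequality) and, by convexity of $f$, the value of $f$ at any point of the open sector is at most a convex combination of values of $f$ along the two edges — but those values all equal the values of $A$. Hence $f = A$ on the whole sector, i.e.\ the sector lies in $V$. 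This shows: if $[x,z_1), [x,z_2) \subset V$ then the convex cone (intersected with $U$) they span lies in $V$. Consequently the corresponding rays through $p$ and through $q$, and indeed through every point of $[p,q]$, lie in $V$, which is exactly the statement that $[p,q] \subset V_\infty$.

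The one subtlety to handle carefully — and the step I expect to be the main obstacle — is the role of the hypothesis that $V$ contains no lines, and the behaviour at the boundary $\partial U$. The no-lines hypothesis is needed to rule out the degenerate configuration $r_2 = -r_1$: if the two rays $[x,z_1)$ and $[x,z_2)$ pointed in exactly opposite directions, their union would already be (a piece of) a line in $V$, contradicting the assumption; more importantly, without it the ``sector'' spanned by $r_1$ and $-r_1$ would be a full line through $x$ and the argument for ray-completeness would force a line into $V$. So I would first invoke the hypothesis to fix an open half-space direction containing all relevant recession directions. Then, when $z_1$ or $z_2$ is a finite boundary point $z \in \partial U$, I must check that the convex-combination/pinching argument still delivers $f = A$ on the open sector $\mathrm{int}\,\mathrm{conv}(\{x\} \cup [x,z_1) \cup [x,z_2))$, using only that this open sector is contained in $U$ (it is, since $U$ is open and convex and $x \in U$, $z_i \in \overline{U}$, so all points $(1-t)x + t z_i$ with $t<1$ are interior, and convex combinations of interior points are interior). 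Finally, relative closedness of $V_\infty$ in $U$ lets me pass from segments between relative-interior points of the rays to the endpoints $p, q$ themselves, completing the proof that $V_\infty$ is convex.
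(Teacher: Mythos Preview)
Your pinching argument is correct and efficient: writing any point $w$ in the open sector as a convex combination of points on the two edge rays and using $f \geq A$ together with convexity of $f$ does give $f = A$ on the whole sector, so the sector lies in $V$. This is essentially the ``convexity and closedness of $V$'' step the paper invokes, made explicit.

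The gap is in the passage from ``sector $\subset V$'' to ``$[p,q] \subset V_\infty$''. For a point $y \in [p,q]$ to lie in $V_\infty$ you need a ray $[x, z_y)$ with $z_y \in \partial U$ (or $z_y$ at infinity) lying entirely in $V$. Your sector argument only shows that the half-line from $x$ through $y$ stays in $V$ \emph{as long as it stays in the sector}. When at least one of $z_1, z_2$ is a finite boundary point, this half-line exits the sector through the ``far edge'' (the segment or ray $(z_1, z_2)$) at some point $Y$, and if $Y \in U$ you have no information about $f$ beyond $Y$. So you have not produced the required ray. This is exactly where the no-lines hypothesis enters in the paper's proof: one shows that $(z_1, z_2) \cap U = \varnothing$, because otherwise $(z_1, z_2) \cap U$ would lie in $V$ (by relative closedness, taking limits from the interior of the sector) and would be a line in the paper's sense (since $z_1$ and/or $z_2$ lie in $\partial U$, the full affine line through them meets the convex open set $U$ only on one side, so the intersection with $U$ is the whole of $\ell \cap U$). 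Once $(z_1, z_2) \cap U = \varnothing$, the half-line from $x$ through $y$ leaves $U$ precisely when it leaves the sector, giving $z_y = Y \in \partial U$ and $[x, z_y) \subset V$.

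Your diagnosis that the hypothesis only rules out the antipodal configuration $r_2 = -r_1$ is therefore off the mark; that configuration is harmless for convexity of $V_\infty$ (the two rays would already form a line, which is convex), and in any case the hypothesis is doing real work in the generic, non-antipodal situation described above. Fixing a half-space containing the recession directions does not by itself control whether the far edge of the sector meets $U$.
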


\begin{wrapfigure}{l}{0.57\textwidth} 
\vspace{-20pt}
  \begin{center}
    \includegraphics[width=0.57\textwidth]{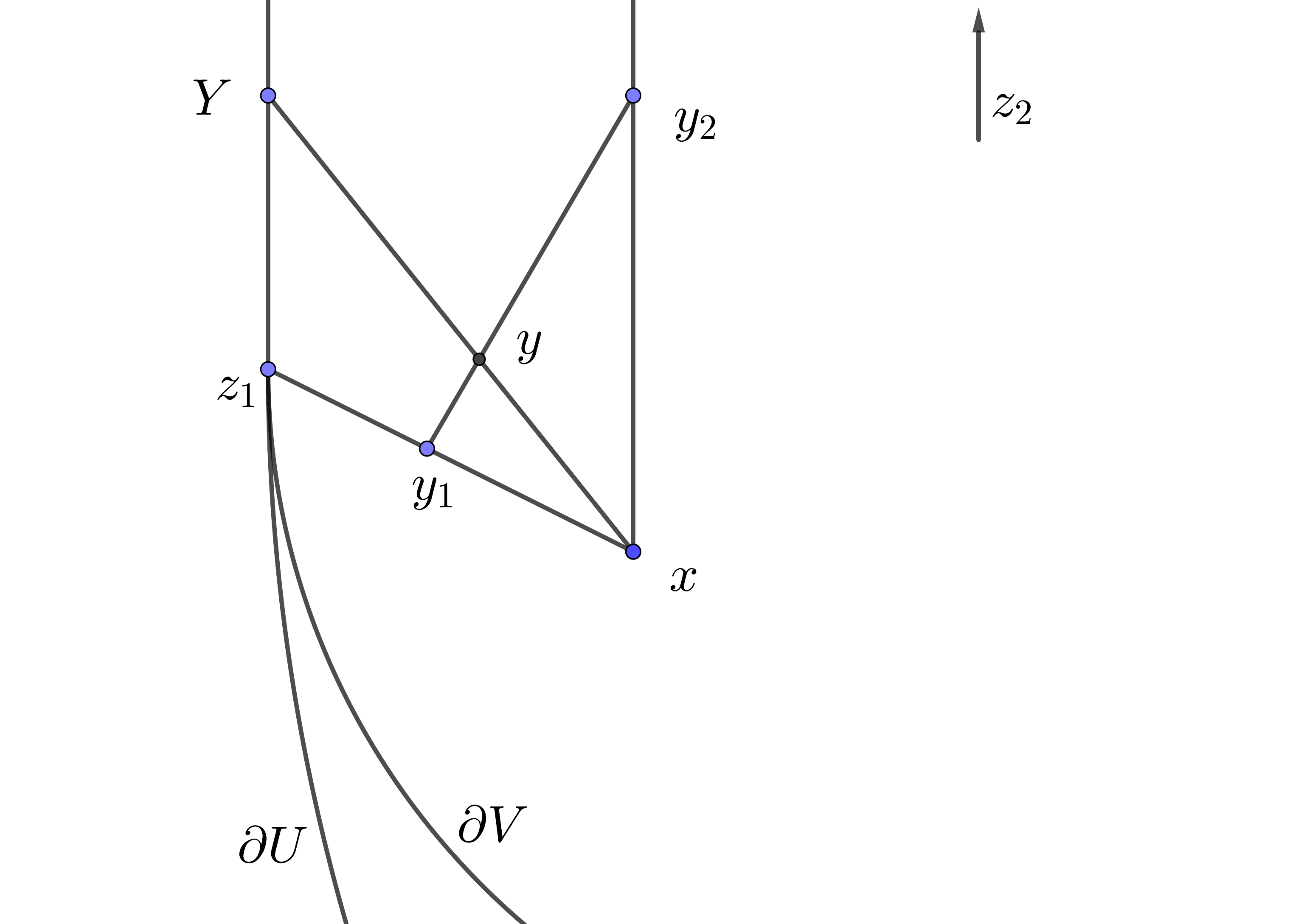}
    \caption{Illustration to the proof of Lemma~\ref{ConeConvexity}.}
\label{Ill1}
  \end{center}
  \vspace{-20pt}
  \vspace{1pt}
\end{wrapfigure} 
\emph{Proof.}
Let~$y_1$ and~$y_2$ be two points in~$V_\infty$ lying on the rays~$[x,z_1)$ and~$[x,z_2)$ correspondingly. The reasoning depends on whether~$z_1$ and~$z_2$ are finite or infinite. Let us consider the case where~$z_1$ is a finite point and~$z_2$ is infinite (this case is the most 'representative'), see Fig.~\ref{Ill1} for a visualization. Consider the ray~$(z_1,z_2)$ (which means a ray with the vertex~$z_1$ and collinear with~$[x,z_2)$). It~$(z_1,z_2)\cap U\ne \varnothing$, then this intersection is contained in~$V$ (by convexity and closedness of~$V$); this cannot happen since in such a case~$(z_1,z_2)\cap U$ is a line. 

Let~$y\in (y_1,y_2)$, we wish to show that~$y\in V_\infty$. Let~$Y = (z_1,z_2)\cap \{x+t(y-x)\colon t > 0\}$. By the above,~$Y \in \partial U$ and~$[x,Y)\subset V$. Thus,~$y\in V_\infty$.

The case where~$z_1$ and~$z_2$ are finite points is similar. The only difference is that now~$(z_1,z_2)$ is a classical segment. The case where~$z_1$ and~$z_2$ are both infinite is a little bit different (in fact, it simplifies). In this case, we do not need the assumption that~$V$ does not contain lines. We consider the ray~$\{x+t(y-x)\colon t > 0\}$ and prove directly that it belongs to~$V_\infty$ (this follows from the closedness and convexity of~$V$).
\qed

We will also need a version of the hyperplane separation theorem. We provide the proof since the construction will be used in Section~\ref{S4} below.
\begin{thm}\label{SeparationTheorem}
Let~$K$ be a convex closed cone in~$\R^n$ with the vertex at the origin. Assume~$K$ does not contain lines. There exists a linear hyperplane~$H$ such that~$H \cap K = \{0\}$.
\end{thm}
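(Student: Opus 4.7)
The plan is to proceed by induction on the ambient dimension $n$. The base case $n=1$ is immediate: the only closed convex cones in $\R$ with vertex at $0$ are $\{0\}$, $[0,\infty)$, $(-\infty,0]$ and $\R$ itself, and only the last contains a line; in the other three cases the trivial hyperplane $H=\{0\}$ works.

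For the inductive step, the first move is to apply the ordinary hyperplane separation theorem. Since $K$ contains no lines, $K$ is a proper closed convex subset of $\R^n$, and $0\in K$ cannot be an interior point (else the cone property forces $K=\R^n$). Hence there is a unit vector $\nu\in\R^n$ with $\langle\nu,x\rangle\ge 0$ for every $x\in K$, i.e.\ a supporting hyperplane $\nu^\perp$ at the origin. Set $K_0:=K\cap\nu^\perp$; this is a closed convex cone in $\nu^\perp\cong\R^{n-1}$ that again contains no lines. If $K_0=\{0\}$ we are done with $H=\nu^\perp$. Otherwise the inductive hypothesis applied inside $\nu^\perp$ produces an $(n-2)$-dimensional linear subspace $H_1\subset\nu^\perp$ with $H_1\cap K_0=\{0\}$. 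A convex cone with no lines has the property that the cone minus its apex is still convex, so $K_0\setminus\{0\}$ is convex and disjoint from $H_1$; hence it lies in a single open half-space of $\nu^\perp$ bounded by $H_1$, and I can choose a unit vector $\mu\in\nu^\perp$ orthogonal to $H_1$ (within $\nu^\perp$) with $\langle\mu,x\rangle>0$ for every $x\in K_0\setminus\{0\}$.

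The conclusion will follow from a perturbation: I claim $H:=(\nu+\eps\mu)^\perp$ is the desired hyperplane for all sufficiently small $\eps>0$. To see this, restrict to the compact slice $S:=K\cap\mathbb{S}^{n-1}$. The zero set $A:=\{x\in S:\langle\nu,x\rangle=0\}$ equals $K_0\cap\mathbb{S}^{n-1}$, where the continuous function $\langle\mu,\cdot\rangle$ is strictly positive and hence bounded below by some $c>0$. A standard compactness argument shows that the sublevel set $\{x\in S:\langle\nu,x\rangle\le\delta\}$ is contained in an arbitrarily small neighborhood of $A$ as $\delta\downarrow 0$, so I can pick $\delta>0$ so small that $\langle\mu,\cdot\rangle>c/2$ on this sublevel set. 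Outside it, $\langle\nu,\cdot\rangle\ge\delta$ and $|\langle\mu,\cdot\rangle|\le 1$, so any $\eps<\delta$ makes $\langle\nu+\eps\mu,\cdot\rangle>0$ throughout $S$, and hence throughout $K\setminus\{0\}$ by positive homogeneity.

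The step I expect to take the most care is the last one: the correct sign selection for $\mu$ and the perturbation argument. This is where the no-lines hypothesis is really doing work, through the induction itself and through the convexity of $K_0\setminus\{0\}$ that lets me orient $H_1$ consistently. The rest is a routine separation-plus-perturbation pattern, and because the construction is concrete (supporting hyperplane $\nu^\perp$, perturbation direction $\mu$, small parameter $\eps$), it should be in a form usable later in Section~\ref{S4}.
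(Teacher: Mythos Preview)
Your proof is correct, but it takes a genuinely different route from the paper's. The paper argues in one stroke: since $K$ contains no lines, the origin is an extreme point of $K$, hence $0\notin\tilde K:=\mathrm{conv}(K\cap\mathbb{S}^{n-1})$; classical separation of the compact convex set $\tilde K$ from $\{0\}$ (specifically via the nearest point $\zeta\in\tilde K$ to the origin and the perpendicular bisector of $[0,\zeta]$) yields a hyperplane $\tilde H$, and its linear translate $H$ through the origin satisfies $H\cap K=\{0\}$. No induction, no perturbation.

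What each approach buys: your induction-plus-perturbation argument is a perfectly valid self-contained proof of the separation statement, and the compactness step on $K\cap\mathbb{S}^{n-1}$ is handled cleanly. The paper's argument is shorter and, more to the point, its explicit normal direction $\zeta/\|\zeta\|$ lies in $K$ (because $\tilde K\subset K$). This is precisely the feature exploited in Section~\ref{S4} (Lemma~\ref{If S does not contain rays then S is a graph}): after rotating so that $H=\{x_n=0\}$, the nearest point $\zeta$ lands on the positive $Ox_n$-axis, so that ray is contained in the recession cone $K$, which is what forces $V$ to be an epigraph. Your normal $\nu+\eps\mu$ has no reason to belong to $K$, so despite being concrete, your construction does not directly deliver the ingredient Section~\ref{S4} actually uses; you would still need an extra argument (or a different choice of normal) to recover that property.
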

\begin{proof}
First, we note that the origin is an extreme point of~$K$. Second, consider the set~$\tilde{K} = \mathrm{conv}(K\cap \mathbb{S}^{n-1})$. This is a compact convex set that does not contain the origin (since~$K\cap \mathbb{S}^{n-1}\subset K\setminus \{0\}$,~$K\cap \mathbb{S}^{n-1}$ is compact, and~$0$ is an extreme point of~$K$). Thus, by the classical hyperplane separation theorem, there exists a hyperplane~$\tilde{H}$ that strongly separates~$\tilde{K}$ and~$0$. For example, one may consider the point~$\zeta\in \tilde{K}$ that has the smallest possible Euclidean norm and set~$\tilde{H}$ to be the midperpendicular of~$\zeta$ and the origin. Let~$H$ be the translate of~$\tilde{H}$ passing through~$0$. Then~$H\cap \tilde{K} = \varnothing$. Therefore,~$H\cap K = \{0\}$.
\end{proof}

\begin{proof}[Proof of Theorem~\ref{NoLinesTheorem}.]
If~$C_x$ and~$A_x$ as in the second condition of the theorem exist for any~$x$, then the graph of~$f$ does not contain lines. The reverse implication is less trivial.

Set~$x=0$ for convenience. Pick some~$L \in \partial f (0)$, consider the set~$V$ defined in~\eqref{VDef} and note that it does not contain lines (since the graph of~$f$ does not). Then, by Lemma~\ref{ConeConvexity},~$V_\infty$ given by~\eqref{VInftyDef} is a closed convex set that does not contain lines. Consider the minimal convex cone with the vertex~$0$ that contains~$V_\infty$ and call it~$V^*$ (note that~$V^*$ is not necessarily a subset of~$U$). This cone is also convex, closed, and does not contain lines.  By Theorem~\ref{SeparationTheorem}, there exists a hyperplane~$H$ that intersects~$V^*$ (and therefore,~$V_\infty$) at the origin only. Without loss of generality, we may assume~$H = \set{y\in \R^n}{y_n = 0}$ and that~$V_\infty$ lies in the hyperspace where~$y_n \geq 0$. We also assume~$f(0) = 0$ and~$L = 0$ (we may subtract an affine function from~$f$ and~$A_x$).

We set~$A(y) = - \eps y_n$, where~$\eps$ is a sufficiently small parameter to be specified later. 
Since~$V^*$ meets~$H$ only at the origin, there exists~$\delta > 0$ such that
\eq{\label{StrictPositivity}
V_{\infty}\setminus \{0\} \subset \Set{y\in \R^n}{y_n > \delta \|y\|}.
}
Let us call the latter set~$K_\delta$. Note that~$W = \mathrm{conv}(V\setminus K_\delta)$ is a compact set lying inside~$U$. Let~$S$ be a star-shaped (not necessarily convex) closed bounded set such that~$V\setminus K_\delta \subset \mathrm{int} S$ and~$(S\cap K_{\delta})\subset U$. We may construct the set~$S$ in the following way. Assume~$U$ contains the closed Euclidean ball of radius~$\nu$ centered at the origin. Let~$\rho$ be the distance between~$W$ and~$\partial U$. Define the function~$s\colon \mathbb{S}^{n-1} \to \R_+$ by the formula
\eq{
s(z) = \max\big(\nu, (\mu_W(z))^{-1} + \rho/2\big), \quad z\in \mathbb{S}^{n-1};
}
here~$\mu_W$ denotes the Minkowski functional of~$W$. We set~$S = \cup_{z\in \mathbb{S}^{n-1}} [0,s(z)z]$.  

Let
\eq{
M = \max\limits_{z\in S}\|z_n\|; \quad m=\inf\limits_{z\in \partial S\setminus K_\delta} \Big(f(z)\Big); \quad \eps = \frac{m}{2M}.
}
Note that~$m > 0$ since~$\partial S \setminus K_\delta$ is a compact set, which does not intersect~$V$. We will shortly prove that with this choice of~$\eps$,~$A(y) > f(y)$ when~$y\in K_\delta \cup (U\setminus S)$; in such a case, we may set~$C_0$ (the compact set we are looking for) equal to~$S$ and~$A_0(y) := A(y) + \eps_1$ for sufficiently small~$\eps_1$. If~$y\in K_\delta$ and~$\|y\| > \nu$, then
\eq{
A(y) \leq - \eps\delta\nu < 0 \leq f(y).
} 
In the case~$y\notin K_\delta$, we also have~$y\notin S$. Let~$y^*$ be the point on the intersection of the segment~$[0,y]$ with the boundary of~$S$. Then,
\eq{
A(y) = -\eps y_n = - \eps \frac{\|y\|}{\|y^*\|}\,  y^*_n  \leq \frac{m}{2} \frac{\|y\|}{\|y^*\|} < \frac{\|y\|}{\|y^*\|} f(y^*) \leq f(y),
} 
the last inequality in the chain follows from the convexity of~$f$ (since~$y^* \in [0,y)$).
\end{proof}

\begin{cor}\label{Just convex approximation from inside}
Let~$\eps \colon U\to \R_+$ be a positive continuous function. Assume the graph of~$f$ does not contain lines. There exists a convex function~$g\colon U\to \R$ such that
\eq{
f(x) < g(x) \leq f(x) + \eps(x),\quad x\in U.
}
\end{cor}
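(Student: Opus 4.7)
The plan is to mirror the construction in Corollary~\ref{Just convex approximation from outside]}, but with Theorem~\ref{NoLinesTheorem} replacing Theorem~\ref{NoRaysTheorem}, and with a \emph{supremum} (rather than infimum) of a carefully tuned family of affine functions---one per base point. A supremum of affine functions is automatically convex; if each summand lies below $f+\varepsilon$ while the summand attached to $x$ strictly exceeds $f$ at $x$, the resulting $g$ will do the job.

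To build the summands, I will fix $x\in U$ and combine two ingredients. Theorem~\ref{NoLinesTheorem} yields a compact set $C_x\subset U$ and an affine function $A_x$ with $A_x(x)>f(x)$ and $A_x<f$ on $U\setminus C_x$. A subgradient $L_x\in\partial f(x)$ gives the subtangent affine map $A_x^0(y):=f(x)+L_x(y-x)$, which satisfies $A_x^0\leq f$ on $U$ with equality at $x$. For a parameter $s\in(0,1)$ I will set
\[
B_x^s:=sA_x^0+(1-s)A_x,
\]
so that $B_x^s(x)=f(x)+(1-s)(A_x(x)-f(x))>f(x)$, while on $U\setminus C_x$ a convex combination of $A_x^0\leq f$ and the strictly smaller $A_x<f$ gives $B_x^s<f$. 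On the compact set $C_x$ the identity $B_x^s-A_x^0=(1-s)(A_x-A_x^0)$ yields the simple bound $B_x^s\leq f+(1-s)M_x$ with $M_x:=\max_{y\in C_x}|A_x(y)-A_x^0(y)|$. Setting $\mu_x:=\min_{y\in C_x}\varepsilon(y)>0$ and choosing $s_x\in(0,1)$ so that $(1-s_x)M_x\leq\mu_x$, the affine function $B_x:=B_x^{s_x}$ satisfies
\[
B_x(x)>f(x)\qquad\text{and}\qquad B_x\leq f+\varepsilon\ \text{on } U.
\]

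Finally I will take $g(y):=\sup_{x\in U}B_x(y)$. The pointwise bound $B_x\leq f+\varepsilon$ propagates to $g\leq f+\varepsilon$, so $g$ is real-valued; as a pointwise supremum of affine functions it is convex. For each $y\in U$ the single element $B_y$ of the family already gives $g(y)\geq B_y(y)>f(y)$, so $f<g\leq f+\varepsilon$ on $U$. The only delicate point---and the one place I expect to be scrutinized---is the pointwise nature of the upper bound on $C_x$: it works because $(1-s_x)M_x\leq\mu_x\leq\varepsilon(y)$ for every $y\in C_x$, while outside $C_x$ we already have the stronger $B_x<f$. All the hard geometric content (cone convexity, hyperplane separation) has already been packaged into Theorem~\ref{NoLinesTheorem}, so no further nontrivial analysis is needed at this stage.
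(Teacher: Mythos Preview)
Your proof is correct and follows essentially the same approach as the paper: the key step---taking a convex combination of the affine function $A_x$ from Theorem~\ref{NoLinesTheorem} with a subtangent $A_x^0(y)=f(x)+L_x(y-x)$, and tuning the coefficient against $\min_{C_x}\varepsilon$ and $\max_{C_x}|A_x-A_x^0|$---is exactly the construction of $\tilde A_x$ in the paper (with your $1-s_x$ playing the role of the paper's $\theta$). The only cosmetic difference is that the paper defines $g$ as the convex hull of the graph of $f+\varepsilon$ and uses the $\tilde A_x$ merely to certify $f<g$, whereas you take $g=\sup_x B_x$ directly; both choices work for the same reason.
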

\begin{proof}
Let the graph of~$g$ coincide with the convex hull of the graph of~$f+\eps$. The inequality~$g \leq f+\eps$ is evident. To prove the inequality~$f < g$, we need to modify the function~$A_{x}$ provided by Theorem~\ref{NoLinesTheorem}. Given any~$x\in U$, we will construct an affine function~$\tilde{A}_{x}$ such that~$f(x) < \tilde{A}_{x}(x)$ and~$\tilde{A}_{x}(y) < f(y) +\eps(y)$ for all~$y\in U$. This will prove the desired inequality~$f<g$.

Let~$C_x$ be the compact set delivered by Theorem~\ref{NoLinesTheorem} together with~$A_x$. We pick some number~$\theta \in (0,1)$ such that
\eq{
\theta < \frac{\inf \set{\eps(y)}{y \in C_x}}{\sup\set{A_{x}(y) - f(y)}{y\in C_x}}
}
and define
\eq{
\tilde{A}_{x}(y) = \theta A_{x}(y) + (1-\theta)\big(f(x) +L(y-x)\big),
}
here~$L\in\partial f(x)$ is an arbitrary function. Then,
\mlt{
\tilde{A}_{x}(y) - f(y) =\\ \theta\big(A_{x}(y) - f(y)\big) + (1-\theta)\big(f(x) + L(y-x) - f(y)\big) < \eps(y),\qquad y\in C_x.
} 
In the case~$y\notin C_x$ we simply have~$\tilde{A}_{x}(y) < f(y)$; the inequality~$f(x) < \tilde{A}_{x}(x)$ is also true.
\end{proof}

\section{Proofs of theorems \ref{main thm for lower approximation of convex functions} and \ref{main thm for upper approximation of convex functions}, and of Corollary \ref{corollary for approximation of convex functions}.}\label{S3}

{
We need to gather some facts and techniques from \cite{Azagra}. For instance we will be using {\em smooth maxima}: for any number $\delta>0$, denote
$$
M_{\delta}(x,y)=\frac{x+y+\theta(x-y)}{2}, \,\,\, (x, y)\in\R^2,
$$
where $\theta\colon\R\to (0,
\infty)$ is a $C^\infty$ function such that:
\begin{enumerate}
\item $\theta(t)=|t|$ if and only if $|t|\geq\delta$;
\item $\theta$ is convex and symmetric;
\item $\textrm{Lip}(\theta)=1$.
\end{enumerate}
If $f, g\colon U\subseteq\R^n\to\R$, define the function $M_{\delta}(f,g)\colon U\to\R$ by
$$
M_{\delta}(f,g)(x)=M_{\delta}(f(x), g(x)).
$$
By~$Lip(f)$ we mean the Lipschitz constant of~$f$.
\begin{prop}\label{properties of M(f,g)}
Let $f, g\colon U\subseteq\R^n\to\R$
be convex functions. For every $\delta>0$, the function
$M_{\delta}(f,g)\colon U\to\R$ has the following properties:
\begin{enumerate}
\item $M_{\delta}(f,g)$ is convex.
\item If $f$ is $C^k$ on $\{x\colon f(x)\geq g(x)-\delta\}$ and $g$ is $C^k$ on $\{x\colon g(x)\geq f(x)-\delta\}$ then $M_{\delta}(f,g)$ is $C^k$ on $U$. In particular, if $f, g$ are $C^k$, then so is $M_{\delta}(f,g)$.
\item $M_{\delta}(f,g)(x)=f(x)$ if $f(x)\geq g(x)+\delta$.
\item $M_{\delta}(f,g)(x)=g(x)$ if $g(x)\geq f(x)+\delta$.
\item $\max\{f,g\}\leq M_{\delta}(f,g)\leq \max\{f,g\} + \delta/2$.
\item $M_{\delta}(f,g)=M_{\delta}(g, f)$.
\item $\textrm{Lip}(M_{\delta}(f,g){|_B})\leq \max\{ \textrm{Lip}(f{|_B}), \textrm{Lip}(g{|_B}) \}$ for every ball $B\subset U$.
\item If $f, g$ are strictly convex on a set $B\subseteq U$, then so is $M_{\delta}(f,g)$.
\item If $f, g\in C^2(U)$ are strongly convex on a set $B\subseteq U$, then so is $M_{\delta}(f,g)$.
\item If $f_1\leq f_2$ and $g_1\leq g_2$ then $M_{\delta}(f_1, g_1)\leq M_{\delta}(f_2, g_2)$.
\end{enumerate}
\end{prop}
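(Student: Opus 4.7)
I would base everything on the identity
\begin{equation*}
M_\delta(f,g)(x) \;=\; \tfrac12\bigl(f(x)+g(x)\bigr) + \tfrac12\,\theta\bigl(f(x)-g(x)\bigr),
\end{equation*}
together with three elementary facts about $\theta$ that I would derive up front from its three defining properties: (a) $|t|\leq \theta(t)\leq |t|+\delta$ for every $t\in\R$, the lower bound coming from $\Lip(\theta)=1$ integrated inward from $\pm\delta$ and the upper bound from $\theta(0)\leq\delta$ (itself a consequence of symmetry and $\Lip(\theta)=1$); (b) $\theta'(t)\in[-1,1]$ everywhere, with $\theta'(t)=\pm 1$ for $\pm t\geq\delta$; and (c) $\theta''\geq 0$, by convexity. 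Regarded as a function on $\R^2$, $M_\delta$ then has partial derivatives $\tfrac{1\pm\theta'(a-b)}{2}\in[0,1]$ that sum to $1$ at every point, and a positive-semidefinite $2\times 2$ Hessian, so $M_\delta\colon\R^2\to\R$ is itself convex and non-decreasing in each variable.

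With these tools most assertions are one-line verifications: (3), (4), (6) just evaluate the formula; (5) is exactly (a) divided by~$2$; (10) is coordinate-wise monotonicity; and (7) follows because pointwise $DM_\delta(f,g)$ is a convex combination of $Df$ and $Dg$ by (b). For (1), I would combine the two features of $M_\delta$ on $\R^2$: monotonicity in each variable upgrades the Jensen inequalities for $f$ and $g$ inside the outer $M_\delta$, and then convexity of $M_\delta\colon\R^2\to\R$ produces the convex combination bound. Property (2) is local: at $x_0$ with $|f(x_0)-g(x_0)|>\delta$ the function $M_\delta(f,g)$ coincides in a neighbourhood with whichever of $f$ or $g$ is larger, and the regularity hypothesis supplies $C^k$ regularity of exactly that one on $\{f\geq g-\delta\}$ or $\{g\geq f-\delta\}$; at $x_0$ with $|f(x_0)-g(x_0)|\leq\delta$ both hypotheses apply near $x_0$, so $f$ and $g$ are both $C^k$ there, and the explicit formula combined with $\theta\in C^\infty$ transfers $C^k$ regularity to $M_\delta(f,g)$.

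The two remaining items are where the proposition has any real content. For (9), differentiating the identity twice gives
\begin{equation*}
D^2 M_\delta(f,g) \;=\; \alpha\,D^2 f + (1-\alpha)\,D^2 g + \tfrac{\theta''(f-g)}{2}(Df-Dg)\otimes(Df-Dg),
\end{equation*}
with $\alpha=\tfrac{1+\theta'(f-g)}{2}\in[0,1]$. The last term is positive semidefinite by (c), and the convex combination $\alpha D^2f+(1-\alpha)D^2g$ is strictly positive definite whenever $D^2f$ and $D^2g$ are, so $M_\delta(f,g)$ is strongly convex. For (8), which does not assume $C^2$, I would redo the chain used for (1) and just verify that both successive inequalities are strict: strict convexity of $f$ and $g$ yields $a:=f(\lambda x+(1-\lambda)y)<\lambda f(x)+(1-\lambda)f(y)=:A$ and similarly $b<B$, and then the fundamental-theorem-of-calculus identity
\begin{equation*}
M_\delta(A,B)-M_\delta(a,b) \;=\; \int_0^1\bigl[\partial_1 M_\delta\cdot(A-a)+\partial_2 M_\delta\cdot(B-b)\bigr]\,ds
\end{equation*}
along the segment from $(a,b)$ to $(A,B)$ is strictly positive because $\partial_1 M_\delta+\partial_2 M_\delta\equiv 1$ and both coordinate differences are positive. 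This observation — that although each partial of $M_\delta$ may vanish individually (when $|a-b|\geq\delta$), the two can never vanish simultaneously — is the only mildly subtle point I anticipate; once it is recorded, the whole list of properties reduces to bookkeeping.
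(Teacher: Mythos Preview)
Your plan is correct and complete. The paper does not actually prove this proposition: its entire proof is the one-line reference ``See \cite[Section 2]{Azagra}.'' What you have written is essentially the direct verification that appears there, organized around the observation that $M_\delta\colon\R^2\to\R$ is convex and non-decreasing in each coordinate with $\partial_1 M_\delta+\partial_2 M_\delta\equiv 1$; from this structural fact all ten items follow as you indicate, and your treatment of (8) and (9) --- the only points with any substance --- is sound.

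One tiny correction: the bound $\theta(0)\le\delta$ does not follow from symmetry and $\Lip(\theta)=1$ alone (that only gives $\theta(0)\le 2\delta$); you also need convexity, which forces $\theta'(0)=0$ and hence $\theta(0)=\theta(\delta)-\int_0^\delta\theta'\le\delta$. This is a trivial fix and does not affect the rest of the argument.
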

\begin{proof}
See \cite[Section 2]{Azagra}.
\end{proof}

The result below follows from the proof of \cite[Theorem 1.1]{Azagra}, although it was not explicitly mentioned there.
\begin{thm}\label{Uniform approximation by strongly convex functions}
Let $U$ be a nonempty convex open subset of $\R^n$, and $f\colon U\to\R$ be convex. Assume that $f$ cannot be written as $f=c\circ P+\ell$, where $P\colon \R^n\to\R^k$ is linear and surjective, $k<n$, $c\colon P(U)\to\R$, and $\ell$ is linear. Then $f$ can be uniformly approximated on $U$ by real-analytic strongly convex functions.
\end{thm}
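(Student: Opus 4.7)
My strategy is to follow the construction from the proof of Theorem~1.1 of \cite{Azagra}, which produces a real-analytic convex $f_1$ with $|f-f_1|<\varepsilon$, and to enrich the family of affine supports used there so that the resulting $f_1$ is moreover strongly convex. The non-degeneracy hypothesis enters through the following lemma: under it, the set $\bigcup_{x\in U}\partial f(x)$ is not contained in any proper affine subspace of $\R^n$. Indeed, if it lay inside $\xi_0+V$ with $V\subsetneq\R^n$, pick $w\in V^\perp\setminus\{0\}$; applying the subgradient inequality at both $x$ and $x+tw$ (using that every subgradient pairs with $w$ to the constant $\langle\xi_0,w\rangle$) gives
\begin{equation*}
f(x+tw)=f(x)+t\langle\xi_0,w\rangle,\qquad x,\,x+tw\in U.
\end{equation*}
Hence $f-\langle\xi_0,\cdot\rangle$ is constant along every line in direction $w$; letting $w$ range over a basis of $V^\perp$, $f=c\circ P+\langle\xi_0,\cdot\rangle$ with $P$ the orthogonal projection onto $V$ and $\mathrm{rank}\,P=\dim V<n$, contradicting the hypothesis. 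Thus I can fix $n+1$ points $y_0,\dots,y_n\in U$ and subgradients $\xi_i^*\in\partial f(y_i)$ with $\xi_1^*-\xi_0^*,\dots,\xi_n^*-\xi_0^*$ linearly independent.

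Next, Azagra's construction builds $f_1$ as an iterated smooth maximum $M_{\delta_k}$ of a countable family of affine supports $A_k(x)=f(p_k)+\langle\eta_k,x-p_k\rangle$, $\eta_k\in\partial f(p_k)$. I would augment this family with the $n+1$ distinguished supports $A_i^*(x)=f(y_i)+\langle\xi_i^*,x-y_i\rangle$; since each $A_i^*\leq f$, choosing the associated smoothing parameters sufficiently small preserves the uniform bound $|f-f_1|<\varepsilon$. For strong convexity, a direct computation gives
\begin{equation*}
D^2M_\delta(u,v)=\tfrac{1+\theta'(u-v)}{2}D^2u+\tfrac{1-\theta'(u-v)}{2}D^2v+\tfrac{\theta''(u-v)}{2}(Du-Dv)(Du-Dv)^T,
\end{equation*}
which, when iterated on affine supports, shows that $D^2f_1(x)$ accumulates nonnegative rank-$1$ contributions $(\eta_i-\eta_j)(\eta_i-\eta_j)^T$ weighted by strictly positive factors at every $x$ where the pair $A_i,A_j$ lies in the transition band $|A_i-A_j|<\delta_k$ of the relevant smooth maximum. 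By choosing the $\delta_k$ associated to the $A_i^*$ large enough that all $n+1$ distinguished supports remain simultaneously active at every $x\in U$, the contributions $(\xi_i^*-\xi_0^*)(\xi_i^*-\xi_0^*)^T$ sum to a strictly positive-definite matrix, so $D^2f_1(x)>0$.

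The main obstacle is the simultaneous-activity condition in the last step: arranging the $\delta_k$ so that all $n+1$ distinguished supports lie in the transition band where $\theta''>0$ at \emph{every} point $x\in U$, while keeping the uniform error below $\varepsilon$. This is a parameter-tuning issue sensitive to the order and scales in Azagra's iterated construction. A cleaner alternative that avoids these combinatorics is to replace the iterated $M_{\delta_k}$ smoothing by a single convergent log-sum-exp
\begin{equation*}
f_1(x)=\varepsilon\log\sum\nolimits_k c_k\exp\bigl(A_k(x)/\varepsilon\bigr),
\end{equation*}
with positive weights $c_k$ making the series converge on $U$; the Hessian of $f_1$ is then exactly the weighted covariance matrix of the gradient set $\{\eta_k\}\cup\{\xi_i^*\}$, which is strictly positive-definite by the affine-spanning property from the lemma.
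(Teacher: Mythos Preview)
Your opening lemma (that the subdifferential set is not contained in a proper affine subspace) is correct and is the same geometric fact the paper extracts, though the paper uses a sharper pointwise version: at \emph{every} $x_0\in U$ there is an $(n{+}1)$-dimensional corner function supporting $f$ (Lemma~\ref{reduction to Rk with k less than n}). This pointwise form is what makes the paper's argument work: each corner $C_j$ is approximated (via \cite[Lemma~4.1]{Azagra}) by a $C^\infty$ function $g_j$ that is \emph{already strongly convex on its own}, and since $M_\delta$ of strongly convex functions is strongly convex (Proposition~\ref{properties of M(f,g)}(9)), the smooth maximum inherits strong convexity automatically. No interaction between different supports is needed, and the gluing over compacts is done by \cite[Theorem~1.2]{Azagra}.

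Your route tries instead to manufacture strong convexity from the interaction of $n{+}1$ fixed affine supports, and both versions have a real gap, not merely a tuning issue. In the smooth-maximum version, for distinct $i,j$ the difference $A_i^*-A_j^*$ is a non-constant affine function, hence unbounded whenever $U$ is unbounded; thus no finite $\delta$ can keep the pair in the transition band $|A_i^*-A_j^*|<\delta$ at every $x\in U$. Outside that band $\theta''=0$ and the rank-one term vanishes, so the Hessian degenerates. In the log-sum-exp version the covariance computation is correct and does give $D^2f_1>0$, but you have not secured the uniform lower bound $f_1>f-\varepsilon$ on all of $U$: from $f_1\ge A_k+\varepsilon\log c_k$ you only get $f_1(x)\ge\sup_k\bigl(A_k(x)+\varepsilon\log c_k\bigr)$, and as $x$ escapes any compact the index $k(x)$ realizing $A_{k(x)}(x)\approx f(x)$ must grow, so the penalty $\varepsilon\log c_{k(x)}\to-\infty$ destroys the bound. (A secondary issue: if the slopes $\eta_k$ are unbounded, the series need not converge in a complex neighbourhood of $U$, so real-analyticity of $f_1$ is not automatic.) The fix the paper uses---and which your log-sum-exp idea would also need---is to work compact by compact with finitely many building blocks that are each strongly convex, and then invoke \cite[Theorem~1.2]{Azagra} for the global uniform statement.
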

For the sake of completeness, let us review the main points of the proof of \cite[Theorem 1.1]{Azagra} and make some remarks as to why the approximations can be taken strongly convex if $f$ is not of the form $f=c\circ P+\ell$. We will use some terminology from \cite{Azagra}.
\begin{defn} 
We will say that a function $C\colon\R^n\to\R$ is a
$k$-dimensional {\em corner function} on $\R^n$ if it is of the
form
    $$
C(x)=\max\{\, \ell_1 +b_1, \, \ell_2 +b_2, \, ..., \, \ell_k +b_k
\, \},
    $$
where the $\ell_j\colon\R^n\to\R$ are linear functions such that the
functions $L_{j}\colon\R^{n+1}\to\R$ defined by $L_{j}(x,
x_{n+1})=x_{n+1}-\ell_j(x)$, $1\leq j\leq k$, are linearly
independent, and the $b_j\in\R$. 
\end{defn}
We will also say that a convex
function $f\colon U\subseteq\R^n\to\R$ is supported by $C$ at a point $x\in U$
provided we have $C\leq f$ on $U$ and $C(x)=f(x)$.

The following lemma is a refinement of \cite[Lemma 4.2]{Azagra}.

\begin{lem}\label{reduction to Rk with k less than n}
Let $U\subseteq\R^n$ be open and convex, let~$f\colon U\to\R$ be a convex function and $x_0\in U$. Assume that $f$ is not supported at $x_0$ by any
$(n+1)$-dimensional corner function. Then there exist $k<n$, a
linear projection $P\colon\R^n\to\R^k$, a $C^p$ convex function
$c\colon P(U)\subseteq\R^k\to\R$, and a linear function $\ell\colon\R^n\to\R$ such that
$f=c\circ P+\ell$.
\end{lem}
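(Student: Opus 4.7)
The plan is to reformulate the corner-support hypothesis as a dimensional deficiency of the set of slopes of affine minorants of $f$, and then extract a flat direction along which $f$ is affine. The starting observation is that via the embedding $\ell_j \mapsto L_j(x,x_{n+1}) = x_{n+1}-\ell_j(x)$, a direct calculation (expand $\sum c_j L_j = 0$ in the last coordinate) shows that $L_1,\dots,L_{n+1}$ are linearly independent in $(\R^{n+1})^*$ if and only if the $n$ differences $\ell_2-\ell_1,\dots,\ell_{n+1}-\ell_1$ are linearly independent in $(\R^n)^*$; equivalently, $\ell_1,\dots,\ell_{n+1}$ are affinely independent.

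Introduce the convex set $\mathcal{D} = \{\ell \in (\R^n)^* : \exists\, b \in \R, \ \ell + b \leq f \text{ on } U\}$ of slopes of globally dominated affine functions. Convexity of $\mathcal{D}$ is immediate from pointwise convex combinations of affine minorants, and $\partial f(x_0) \subseteq \mathcal{D}$ since each $\ell \in \partial f(x_0)$ yields the global minorant $\ell + (f(x_0)-\ell(x_0))$. The first main step is to prove: if $f$ is not supported at $x_0$ by any $(n+1)$-dimensional corner, then $\mathcal{D}$ is contained in an affine subspace of $(\R^n)^*$ of dimension $k < n$. Otherwise $\mathcal{D}$ would affinely span $(\R^n)^*$; pick $\ell_1 \in \partial f(x_0)$, and since $\mathcal{D}-\ell_1$ then linearly spans $(\R^n)^*$, choose $\ell_2,\dots,\ell_{n+1} \in \mathcal{D}$ with $\ell_2-\ell_1,\dots,\ell_{n+1}-\ell_1$ linearly independent. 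Setting $b_1 := f(x_0)-\ell_1(x_0)$ and, for $j \ge 2$, any $b_j$ with $\ell_j + b_j \leq f$ on $U$, the function $C := \max_j(\ell_j + b_j)$ is an $(n+1)$-dimensional corner (by the preliminary observation), globally dominated by $f$, and $C(x_0) = f(x_0)$ because the $j=1$ piece attains $f(x_0)$ there while the others are $\leq f(x_0)$. This contradicts the hypothesis.

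So $\mathcal{D} \subseteq \ell_0 + V$ for a fixed $\ell_0 \in \mathcal{D}$ and a linear subspace $V \subseteq (\R^n)^*$ of dimension $k < n$. Let $W \subseteq \R^n$ be the annihilator of $V$, a subspace of dimension $n-k \ge 1$. For any $x \in U$, any $\ell \in \partial f(x) \subseteq \mathcal{D}$ and any $u \in W$, the element $\ell - \ell_0 \in V$ annihilates $u$, hence $\ell(u) = \ell_0(u)$. Applying the standard identities $f'(x,u) = \max_{\ell \in \partial f(x)} \ell(u)$ and $-f'(x,-u) = \min_{\ell \in \partial f(x)} \ell(u)$ for convex functions, we conclude that the two-sided directional derivative of $f$ at every $x \in U$ in every direction $u \in W$ exists and equals $\ell_0(u)$. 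Consequently $f - \ell_0$ is constant on every line segment in $U$ parallel to $W$.

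To conclude, let $P \colon \R^n \to \R^n/W \cong \R^k$ be the canonical linear projection and $\ell := \ell_0$. Since $f - \ell$ is constant on each fiber of $P$ meeting $U$ (and $U$ is convex, hence fibers of $P|_U$ are connected), it descends to a well-defined function $c \colon P(U) \to \R$ satisfying $f = c \circ P + \ell$. Convexity of $c$ is inherited from $f - \ell$ via the surjective affine map $P$; if moreover $f$ is of class $C^p$, then expressing $c$ locally as $(f - \ell) \circ s$ for a linear section $s$ of $P$ gives $c \in C^p$. The main obstacle I anticipate is the first step: one must pinpoint the precise affine-dimension condition on $\mathcal{D}$ dual to the corner-support hypothesis and, crucially, verify the converse by assembling a supporting corner from independent slopes together with one genuine subgradient at $x_0$ (so that the equality at $x_0$ is realized). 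The remaining steps — extracting $W$ and descending through $P$ — are then routine convex-analytic bookkeeping.
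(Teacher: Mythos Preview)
Your proposal is correct and follows essentially the same approach as the paper's. The paper does not spell out a full proof but refers to \cite[Lemma 4.2]{Azagra} together with the remark that, in the non-$C^1$ case, one uses the fact that a convex function whose subdifferential range is contained in $\{0\}$ must be constant, applied to $(t_1,\dots,t_{n-k})\mapsto (f-\ell_1)(y+\sum_j t_j w_j)$; this is precisely your steps of finding the annihilator subspace $W$ and showing $f-\ell_0$ is constant along $W$, while your dimensional analysis of $\mathcal{D}$ and the explicit construction of a supporting $(n+1)$-corner from one genuine subgradient plus $n$ affinely independent slopes fills in exactly the contrapositive step that the reference to \cite{Azagra} covers.
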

In the statement of \cite[Lemma 4.2]{Azagra}, $f$ was assumed to be $C^1$, but this was just for convenience; the same proof can be used to show the result for an arbitrary convex function (using the fact that if the range of the subdifferential of a
convex function is contained in $\{0\}$ then the function is
constant, and applying this to the function $(t_1,...,
t_{n-k})\mapsto (f-\ell_1)(y+\sum_{j=1}^{n-k}t_j w_j)$).

\begin{proof}[Proof of Theorem~\ref{Uniform approximation by strongly convex functions}]
In order to show Theorem \ref{Uniform approximation by strongly convex functions}, one can argue as follows. Let us consider a compact convex subset $K$ of $U$. Given $\varepsilon>0$, since $f$ is convex and
Lipschitz on $K$ we can find finitely many points $x_1, ...., x_m\in K$ and affine functions $h_1,
..., h_m\colon\R^n\to\R$ such that $f$ is differentiable at each $x_j$, each $h_j$ supports $f-\varepsilon$ at $x_j$, and $f-2\varepsilon\leq \max\{h_1, ...,
h_m\}$ on $K$. By convexity we also have $\max\{h_1, ..., h_{m}\}\leq f-\varepsilon$
on all of $U$. By the preceding lemma, for each $x_j$ we may find a $(n+1)$-dimensional corner function $C_j$ that supports $f-\varepsilon$ at $x_j$. Note that these corner functions are always defined on all of $\R^n$ (even when $f$ is not). Since $f$ is
convex and differentiable at $x_j$, we have $h_j=C_j$ on a neighborhood of
$x_j$ and, by convexity, also $h_j\leq C_j\leq f-\varepsilon$ and
$\max\{C_1, ..., C_m\}\leq f-\varepsilon$ on $U$. We also have
$f-2\varepsilon\leq \max\{h_1,..., h_m\}\leq\max\{C_1, ...,
C_{m}\}\leq f-\varepsilon$ on $K$.  Now apply \cite[Lemma 4.1]{Azagra} to the functions $C_j+\varepsilon'/2$ in order to find $C^{\infty}$ strongly
convex functions $g_{1}, ..., g_{m}\colon \R^n\to\R$ such that $C_j\leq
g_j\leq C_j +\varepsilon'$, where $\varepsilon':=\varepsilon/2m$,
and define $g\colon\R^n\to\R$ by
$$g=M_{\varepsilon'}(g_1, M_{\varepsilon'}(g_2,
M_{\varepsilon'}(g_3, ..., M_{\varepsilon'}(g_{m-1}, g_m))...))$$
(for instance, if $m=3$, then $g=M_{\varepsilon'}(g_1,
M_{\varepsilon'}(g_2, g_3))$). By Proposition \ref{properties of
M(f,g)}, we have that $g\in C^{\infty}(\R^d)$ is strongly convex,
    $$
\max\{C_1, ..., C_m\}\leq g\leq \max\{C_1, ...,
C_m\}+m\varepsilon'\leq f-\frac{\varepsilon}{2} \,\,\, \textrm{ on
} \,\,\, U,
    $$
and
    $$
f-2\varepsilon\leq \max\{C_1, ..., C_m\}\leq g \,\,\, \textrm{ on
} \,\,\, K.
    $$
Therefore, $f\colon U\subseteq\R\to\R$ can be approximated from below by $C^\infty$
strongly convex functions, uniformly on each compact convex subset of
$U$. By \cite[Theorem 1.2]{Azagra} and Remark 1 in Section 2 of the same paper, we conclude that, given
$\varepsilon>0$ we may find a $C^\infty$ strongly convex function
$h$ such that $f-2\varepsilon\leq h\leq f-\varepsilon$ on $U$.

Finally, set 
$$
\eta(x)=\frac{1}{2}\min\{ \varepsilon, \, \min\{D^{2}h(x)(v)^2 \colon v\in \R^{n}, \|v\|=1\}\}, \,\,\, x\in U.
    $$
The function $\eta\colon U\to (0, \infty)$ is
continuous, so we can apply Whitney's theorem (Lemma~$6$ in~\cite{Whitney}) on $C^2$-fine
approximation of $C^2$ functions by real-analytic functions to find a real analytic function $g\colon U\to\R$ such
that
    $$
\max\{|h-g|, \|Dh-Dg|, \|D^2h-D^2g|\}\leq \eta.
    $$
This implies that $f-3\varepsilon\leq g\leq f$ and
$D^2g\geq\frac{1}{2}D^2h>0$, so $g$ is strongly convex as well.
\end{proof}

We will also make use of the following simple fact.

\begin{lem}\label{no lines lemma}
Let $I=(a,b)$, where $-\infty\leq a < b\leq+\infty$, let~$\varphi\colon I\to\R$ be convex and let~$\psi\colon\R\to [0, 1]$ be differentiable and such that $\lim_{|t|\to\infty}\psi(t)=0$. If $I\neq\R$, also assume that $\psi^{-1}(0)=\R\setminus I$. If $|\varphi(t)|\leq \psi(t)$ for all $t\in I$, then $\varphi(t)=0$ for all $t\in I$.
\end{lem}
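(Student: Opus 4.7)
The plan is to prove $\varphi\equiv 0$ by splitting into cases according to the nature of $I$; the most delicate situation is when $I$ is a bounded interval, which is the only case where the differentiability of $\psi$ is actually used.

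First I would dispose of the easy cases where $I$ has at least one infinite endpoint. When $I=\R$, the hypothesis $|\varphi|\leq\psi\leq 1$ makes $\varphi$ a bounded convex function on $\R$; if $\varphi$ had a subgradient of non-zero slope at some point, that subgradient inequality would dominate an unbounded affine minorant, contradicting boundedness, so $\varphi$ must be constant, and the limit $\psi(t)\to 0$ at infinity pins the constant to $0$. When $I=(a,\infty)$ with $a$ finite (and symmetrically for $I=(-\infty,b)$), the same support-line reasoning combined with $\varphi(t)\to 0$ at $+\infty$ forces the right derivative $\varphi'_+$ to be non-positive everywhere, so $\varphi$ is non-increasing; continuity of $\psi$ with $\psi(a)=0$ gives $\varphi(t)\to 0$ as $t\to a^+$ as well, and a non-increasing function whose limits at both ends vanish is identically $0$.

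The heart of the matter is the case $I=(a,b)$ with both endpoints finite. For the upper bound $\varphi\leq 0$ on $I$, I would take the chord inequality
\eqq{
\varphi(t)\leq \frac{u-t}{u-s}\,\varphi(s) + \frac{t-s}{u-s}\,\varphi(u), \qquad a<s<t<u<b,
}
and let $s\to a^+$ and $u\to b^-$, using that $\varphi(s),\varphi(u)\to 0$ by continuity of $\psi$. For the reverse inequality $\varphi\geq 0$ I would invoke differentiability of $\psi$: since $\psi\geq 0$ on $\R$ with $\psi(a)=0$, the point $a$ is a (global) minimum of $\psi$, hence $\psi'(a)=0$; this yields $\psi(t)/(t-a)\to 0$ as $t\to a^+$, and therefore $\varphi(t)/(t-a)\to 0$ by squeezing. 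If $\varphi(t_0)<0$ for some $t_0\in(a,b)$, then plugging $u=t_0$ into the chord inequality and sending $s\to a^+$ gives $\varphi(t)\leq \frac{t-a}{t_0-a}\,\varphi(t_0)$, so $\varphi(t)/(t-a)\leq \varphi(t_0)/(t_0-a)<0$ throughout $(a,t_0)$, contradicting the first-order vanishing at $a$. Combining the two inequalities gives $\varphi\equiv 0$.

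The main obstacle is precisely the reverse inequality in this bounded case: mere continuity of $\psi$ at the endpoints would only imply $\varphi\to 0$ there, and the chord inequality by itself only produces the upper bound $\varphi\leq 0$. Ruling out strictly negative interior values genuinely requires $\psi$ to vanish to first order at every finite endpoint of $I$, and that is exactly what the hypothesis that $\psi$ is differentiable on all of $\R$ (not just on $I$) supplies.
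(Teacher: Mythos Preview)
Your argument is correct and matches the paper's proof in its essential point: in the presence of a finite endpoint $a$, the differentiability of $\psi$ at $a$ (together with $\psi(a)=0$ being a minimum) forces $\psi'(a)=0$, hence $\varphi(t)/(t-a)\to 0$, which by the monotonicity of secant slopes rules out $\varphi(t_0)<0$. The paper organizes things a bit differently—it first disposes of $\varphi\leq 0$ in one stroke via an interior-maximum argument, then handles $\varphi\geq 0$ by using whichever endpoint is finite (and the constancy argument when $I=\R$)—whereas you split into the three cases $I=\R$, $I$ half-infinite, $I$ bounded from the outset. A small bonus of your organization is that your treatment of the half-infinite case (non-increasing with vanishing limits at both ends) does not invoke differentiability of $\psi$ at all, whereas the paper's uniform finite-endpoint argument does; so your remark that differentiability is only genuinely needed in the bounded case is accurate for your proof, and in fact sharpens the picture slightly.
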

\begin{proof}
If $\varphi(s)>0$ for some $s\in I$, then $\varphi$ attains a maximum in $I$, and since $\varphi$ is convex and $\lim_{t\to a^{+}}\varphi(t)=0$, $\varphi$ it must be constantly $0$. Hence $\varphi\leq 0$. Let us see that $\varphi(t)=0$ for all $t\in I$. Take $t_0\in I$. If $a\in\R$, by convexity we have
$$
-\frac{\psi(t)}{t-a}\leq \frac{\varphi(t)}{t-a}\leq \frac{\varphi(t_0)}{t_0-a} \textrm{ for all } t\in [a, t_0],
$$
but
$$
\lim_{t\to a^{+}}\frac{-\psi(t)}{t-a}=-\psi'(a)=0,
$$
so $0\leq \varphi(t_0)$. If $b\in\R$, similarly we get $\varphi(t_0)\geq 0$. Finally, if $I=\R$, since $\varphi\leq 0$ is convex, $\varphi$ must be constant, and the assumptions that $\lim_{|t|\to\infty}\psi(t)=0$ and $|\varphi|\leq\psi$ imply that this constant must be $0$.
\end{proof}

\medskip

\begin{proof}[Proof of Theorem \ref{main thm for upper approximation of convex functions}]
\noindent $(2)\implies (3)$ is trivial.

\medskip
\noindent $(3)\implies (1)$ is a consequence of Lemma \ref{no lines lemma}: if the restriction of $f$ to $U\cap\{x+tv\colon t\in\R\}$ is affine, we may consider a function $\varepsilon\colon\R^d\to [0, 1]$ of class $C^1$ such that $\lim_{|x|\to\infty}\varepsilon(x)=0$ and $\R^n\setminus U=\varepsilon^{-1}(0)$ (if $U\neq\R^n$). By assumption there exists a convex function $g\colon U\to\R$ such that $f<g<f+\varepsilon$. Then we may apply Lemma \ref{no lines lemma} with $\varphi(t):=g(x+tv)-f(x+tv)$ and $\psi(t)=\varepsilon(x+tv)$ to find that $g(x+tv)=f(x+tv)$ for all $t$, contradicting that $f<g$.

\medskip

\noindent $(1)\implies (2)$:
We may assume that $\lim_{|x|\to\infty}\varepsilon(x)=0$, and if $U\neq\R^n$, we may also assume that $\varepsilon$ has $C^1$ extension to all of $\R^n$, denoted still by $\varepsilon$, such that $\varepsilon^{-1}(0)=\R^n\setminus U$. Let us fix a sequence of compact sets $(K_j)$ such that
$$
U=\bigcup_{j=1}^{\infty}K_j \,\,\, \textrm{ and} \,\,\, K_j \subset \textrm{int}(K_{j+1}) \textrm{ for every  } j\in \N.
$$
By Corollary \ref{Just convex approximation from inside} there exists a convex function $h_1\colon U\to\R$ such that
$$
f<h_1<f+\varepsilon.
$$
Since the graph of $f$ contains no lines, using the preceding lemma it is easy to see that the graph of $h_1$ contains no lines either (if the restriction of $f$ to $U\cap\{x+te\colon t\in\R\}$ is affine, we may apply the lemma with the functions $\varphi(t):=h_1(x+te)-f(x+te)$ and $\psi(t):=\varepsilon(x+te)$). In particular $h_1$ is not of the form $h_1=c\circ P+\ell$ for any linear projection $P\colon \R^n\to\R^k$ with $k<n$ and $\ell$ linear. Then, according to Theorem \ref{Uniform approximation by strongly convex functions}, we may find a strongly convex $C^{\infty}$ function $g_1\colon U\to\R$ such that
$$
h_1-\frac{\varepsilon_1}{3}<g_1<h_1-\frac{\varepsilon_1}{6} \textrm{ on } U,
$$
where 
$$
\varepsilon_1 :=\inf_{x\in K_1}\{h_1(x)-f(x)\}.
$$
For future notational consistency, we also write~$\varphi_1=g_1$.
By continuity of $\varepsilon$ and compactness of $K_1$ there exists $m_1\in\N$ such that
$$
f+\frac{\varepsilon}{m_1}<h_1-\frac{2}{3}\varepsilon_1 \textrm{ on } K_1,
$$
and applying again Corollary \ref{Just convex approximation from inside} we can take a convex function $h_2\colon U\to\R$ such that
$$
f<h_2<f+\frac{\varepsilon}{m_1} \textrm{ on } U.
$$
Observe that the graph of $h_2$ cannot contain any line. Now let us set
$$
\varepsilon_2 :=\inf_{x\in K_2}\{h_2(x)-f(x)\}>0,
$$
and use Theorem \ref{Uniform approximation by strongly convex functions} to obtain a strongly convex $C^{\infty}$ function $\varphi_2\colon U\to\R$ such that
$$
h_2-\frac{\varepsilon_2}{3}<\varphi_2<h_2-\frac{\varepsilon_2}{6} \textrm{ on } U.
$$
Let us define
$$
g_2 :=M_{\delta_2}(g_1, \varphi_2),
$$
where $\delta_2=\varepsilon_2/12$. By using Proposition \ref{properties of M(f,g)} we see that $g_2$ is a strongly convex $C^\infty$ function satisfying
$$
\max\{g_1, \varphi_2\}\leq g_2\leq \max\{g_1, \varphi_2\} +\delta_2/2.
$$
Also, since $\varphi_2<h_2 -\varepsilon_2/6 <h_1-2\varepsilon_1/3<g_1-\varepsilon_1/3$ on $K_1$, and $\varepsilon_1/3>\delta_2$, we obtain
$$
g_2=g_1 \textrm{ on } K_1.
$$
Moreover, we have
$$
f<g_2<f+\varepsilon \textrm{ on } K_2,
$$
because
$$
g_2\geq\varphi_2>h_2-\varepsilon_2/3>h_2-\varepsilon_2\geq f \textrm{ on } K_2
$$
and
\mltt{
g_2 \leq \max\{g_1, \varphi_2\}+\delta_2/2 \leq\max\{h_1-\varepsilon_1/6, h_2-\varepsilon_2/6\}+\delta_2/2\leq\\ \max\{h_1, h_2\}-\varepsilon_2/12 <f+\varepsilon 
}
on $U$.


We continue this process by induction: suppose that, for $N\geq 2$, we have defined convex functions $h_1, ..., h_N\colon U\to\R$, strongly convex functions $g_1, ..., g_N$, and $\varphi_1, ..., \varphi_N\in C^{\infty}(U)$ (with $\varphi_1=g_1$),
numbers $1=m_0<m_1<m_2< ...< m_{N-1}\in\N$ such that, for every $j=1, ..., N$,
$$
f+\frac{\varepsilon}{m_{j}}<h_j-\frac{2}{3}\varepsilon_j \textrm{ on } K_j,
$$
and
$$
h_j-\frac{\varepsilon_j}{3}<\varphi_j<h_j-\frac{\varepsilon_j}{6} \textrm{ on } U,
$$
where
$$
\varepsilon_j :=\inf_{x\in K_j}\{h_j(x)-f(x)\},
$$
$$
g_j=M_{\delta_{j}}(g_{j-1}, \varphi_j),
$$
with
$$
\delta_j=\frac{\varepsilon_j}{3\cdot 2^{j}},
$$
and also
$$
g_j=g_{j-1} \textrm{ on } K_{j-1},
$$
$$
g_{j}\leq\max\{h_1, ..., h_{j}\}-\delta_j <f+\varepsilon \textrm{ on } U,
$$
$$
g_j\geq h_j-\varepsilon_j\geq f \textrm{ on } K_j.
$$
Then we can find $m_N\in\N$ such that  $m_N>m_{N-1}$ and
$$
f+\frac{\varepsilon}{m_N}<h_{N}-\frac{2}{3}\varepsilon_N \textrm{ on } K_N,
$$
and using Corollary \ref{Just convex approximation from inside}, we obtain a convex function $h_{N+1}\colon U\to\R$ such that
$$
f<h_{N+1}<f+\frac{\varepsilon}{m_N} \textrm{ on } U.
$$
According to Lemma \ref{no lines lemma} the graph of $h_N$ cannot contain any line, so by Theorem \ref{Uniform approximation by strongly convex functions}, for
$$
\varepsilon_{N+1}:=  \inf_{x\in K_{N+1}}\{h_{N+1}(x)-f(x)\}>0,
$$
there exists a strongly convex function $\varphi_{N+1}\in C^{\infty}(U)$ such that
$$
h_{N+1}-\frac{\varepsilon_{N+1}}{3}<\varphi_{N+1}<h_{N+1}-\frac{\varepsilon_{N+1}}{6} \textrm{ on } U.
$$
We define $$\delta_{N+1}=\frac{\varepsilon_{N+1}}{3\cdot 2^{N+1}},$$ and
$$
g_{N+1}=M_{\delta_{N+1}}(g_N, \varphi_{N+1}),
$$
which is a strongly convex $C^\infty$ function satisfying
$$
\max\{g_N, \varphi_{N+1}\}\leq g_{N+1}\leq \max\{g_N, \varphi_{N+1}\} +\delta_{N+1}/2.
$$
Since $\varphi_{N+1}<h_{N+1} -\varepsilon_{N+1}/6 <h_{N+1}-2\varepsilon_N/3<g_N-\varepsilon_N/3$ on $K_N$, and $\varepsilon_N/3>\delta_{N+1}$, Proposition \ref{properties of M(f,g)} implies
$$
g_{N+1}=g_N \textrm{ on } K_N.
$$
On the other hand,
$$
g_{N+1}\geq\varphi_{N+1}>h_{N+1}-\varepsilon_{N+1}/3>h_{N+1}-\varepsilon_{N+1}\geq f \textrm{ on } K_{N+1}
$$
and
\begin{eqnarray*}
& & g_{N+1} \leq \max\{g_N, \varphi_{N+1}\}+\frac{\delta_{N+1}}{2}\leq \max\{
\max\{h_1, ..., h_{N}\}-\delta_N, \, h_{N+1}-\frac{\varepsilon_{N+1}}{6}\}+\delta_{N+1} \\
& & \leq  \max\{h_1, ..., h_{N+1}\}-\delta_{N+1}<f+\varepsilon
\end{eqnarray*}
on $U$.

Therefore, by induction there exist sequences of functions $(g_j)$, $(\varphi_j)$, $(h_j)$ satisfying the properties listed above for every $j\in\N$.

Let us finally define
$$
g(x)=\lim_{j\to\infty}g_j(x), \,\,\, x\in U.
$$
Since the $g_j\in C^{\infty}(U)$ are strongly convex and satisfy $g_{j+1}=g_{j}$ on $K_j$,  $K_j\subset \mathrm{int}\, K_{j+1}$  for every $j$, and $U=\bigcup_{j\in\N} K_j$, it is clear that $g$ is well defined, strongly convex, and of class $C^{\infty}(U)$. We also have $g_j>f$ on $K_j$ for every $j$, and $g_j<f+\varepsilon$ on $U$ for every $j$, which imply $f<g<f+\varepsilon$ on $U$.

In order to obtain a real-analytic function $\psi$ with these properties, let 
\mltt{
\eta(x) :=\\ \frac{1}{2}\min\Big\{ g(x)-f(x), \, f(x)+\varepsilon(x)-g(x), \, \min\big\{D^{2}g(x)(v)^2 \,
\colon v\in \R^{n}, \|v\|=1\big\}\Big\}, \\  x\in U,
   }
which defines a strictly positive continuous function on $U$. We can apply Whitney's theorem (Lemma~$6$ in~\cite{Whitney}) on $C^2$-fine
approximation of $C^2$ functions to find a real-analytic function $\psi\colon U\to\R$ such
that
    $$
\max\{|\psi-g|, \|D\psi-Dg|, \|D^2\psi-D^2g|\}\leq \eta.
    $$
This implies that $f<\psi<f+\varepsilon$ and
$D^2\psi\geq\frac{1}{2}D^2g>0$, so $g$ is strongly convex too. 
\end{proof}

\medskip

\begin{proof}[Proof of Theorem \ref{main thm for lower approximation of convex functions}]

\noindent $(1)\implies (2)$: Since the graph of $f$ does not contain any ray, it does not contain any line either. Then, according to Theorem \ref{NoRaysTheorem}, there exists a convex function $h\colon U\to\R$ such that $f-\varepsilon<h<f$. Setting $\delta(x)=f(x)-h(x)$, $x\in U$, we may apply Theorem \ref{main thm for upper approximation of convex functions}  to $h$ to find a real-analytic strongly convex function $g\colon U\to\R$ such that $h<g<h+\delta$, which implies $f-\varepsilon<g<f$.

\noindent $(2)\implies (3)$ is trivial.

\noindent $(3)\implies (1)$ can be proved by using the following variant of Lemma \ref{no lines lemma} with the function $\varphi(t)=g(x+tv)-f(x+tv)$, $t\in (a, t_0]$, assuming that the graph of $f$ is affine on some ray $\{x+tv : t\in (a, t_0]\}$ of $U$.
\begin{lem}\label{no rays lemma}
Let $I=(a,b)$, where $-\infty\leq a < b\leq+\infty$, let~$\varphi\colon I\to\R$ be convex and let $\psi\colon \R\to [0, 1]$ be differentiable and such that $\lim_{|t|\to\infty}\psi(t)=0$. If $I\neq\R$, also assume that $\psi^{-1}(0)=\R\setminus I$. Let $t_0\in I$. If $-\psi(t)\leq\varphi(t)$ for all $t\in I\cap (a, t_0]$, then $\varphi(t)\geq 0$ for all $t\in I$ sufficiently close to~$a$.
\end{lem}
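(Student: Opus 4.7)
The plan is to first show that the one-sided limit $\varphi(a^+):=\lim_{t\to a^+}\varphi(t)$ exists (in $[-\infty,+\infty]$, by a standard property of convex functions on an open interval) and is non-negative, reduce to the only non-trivial case $\varphi(a^+)=0$, and then propagate non-negativity onto a full right neighborhood of $a$ by a slope-monotonicity argument. The inequality $\varphi(a^+)\geq 0$ follows from $\varphi(t)\geq -\psi(t)$ together with $\lim_{t\to a^+}\psi(t)=0$: when $a$ is finite this holds because $\psi(a)=0$ by the hypothesis $\psi^{-1}(0)=\R\setminus I$ and continuity of $\psi$, and when $a=-\infty$ it is the hypothesis $\lim_{|t|\to\infty}\psi=0$. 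If $\varphi(a^+)>0$, the definition of the limit directly gives $\varphi>0$ on some interval $(a,a+\delta)$, and we are done.

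Assume therefore that $\varphi(a^+)=0$; I shall prove the stronger statement that $\varphi\geq 0$ on the whole of $I$. Suppose first that $a$ is finite. Since $\psi\geq 0$ is differentiable with $\psi(a)=0$, the point $a$ is a minimum of $\psi$, so $\psi'(a)=0$ and $\psi(t)/(t-a)\to 0$ as $t\to a^+$. Consider $h(s):=\varphi(s)/(s-a)$. For $a<r<s_1<s_2<b$, the slope inequality for convex functions gives $(\varphi(s_1)-\varphi(r))/(s_1-r)\leq (\varphi(s_2)-\varphi(s_1))/(s_2-s_1)$; letting $r\to a^+$ and using $\varphi(a^+)=0$ yields $\varphi(s_1)/(s_1-a)\leq (\varphi(s_2)-\varphi(s_1))/(s_2-s_1)$, which rearranges to $h(s_1)\leq h(s_2)$. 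Thus $h$ is non-decreasing on $(a,b)$, and since $h(s)\geq -\psi(s)/(s-a)\to 0$ as $s\to a^+$, the infimum of $h$ is non-negative, so $h\geq 0$ and hence $\varphi\geq 0$ on $I$. When $a=-\infty$ one argues analogously but with the slope taken from any fixed reference point $s_0\in I$: the function $\tilde h(s):=(\varphi(s)-\varphi(s_0))/(s-s_0)$ is non-decreasing by convexity and tends to $0$ as $s\to-\infty$, because $\varphi(s)\to 0$ while $s-s_0\to-\infty$. Hence $\tilde h\geq 0$, which makes $\varphi$ non-decreasing on $I$; combined with $\varphi(a^+)=0$ this again gives $\varphi\geq 0$.

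The hard part is precisely the degenerate case $\varphi(a^+)=0$: the raw bound $\varphi\geq-\psi$ only yields $\liminf_{t\to a^+}\varphi\geq 0$ a priori, so one has to rule out a small negative dip of $\varphi$ just to the right of $a$. In the finite-$a$ case the decisive ingredient is the superlinear decay $\psi(t)=o(t-a)$, which is automatic because $\psi\geq 0$ is differentiable with a minimum at $a$, and which is precisely what prevents the monotone normalized slope $h(s)=\varphi(s)/(s-a)$ from having a negative infimum. In the case $a=-\infty$ the argument is actually lighter, since the large denominator $s-s_0$ already drives the secant slope to $0$ at $-\infty$ regardless of any decay rate of $\psi$.
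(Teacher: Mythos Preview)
Your argument is correct and follows the same slope-monotonicity idea the paper intends (it explicitly says to mimic the proof of the previous lemma and leaves the details to the reader). Your preliminary case split on the value of $\varphi(a^{+})$ is the natural way to compensate for the missing upper bound $\varphi\le\psi$ that was available in the line lemma.

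One small point deserves an extra sentence. In the case $a=-\infty$ you assert that $\tilde h\ge 0$ ``makes $\varphi$ non-decreasing on $I$''. As written, $\tilde h\ge 0$ only says that all secants through the \emph{single} base point $s_0$ have non-negative slope, which by itself just means $\varphi(s)\le\varphi(s_0)$ for $s<s_0$ and $\varphi(s)\ge\varphi(s_0)$ for $s>s_0$; that alone does not force monotonicity. Convexity is needed again: if $\varphi$ strictly decreased on some interval $[s_1,s_2]$, then the supporting line through $(s_1,\varphi(s_1))$ with slope at most $(\varphi(s_2)-\varphi(s_1))/(s_2-s_1)<0$ would drive $\varphi(r)\to+\infty$ as $r\to-\infty$, contradicting $\varphi(-\infty)=0$ (equivalently, it would make $\tilde h(r)$ negative for $r$ very negative). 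With that one line added, the $a=-\infty$ case is complete. Alternatively, you can bypass the monotonicity claim altogether: for any $s$, convexity gives $\varphi'_{+}(s)\ge (\varphi(s)-\varphi(r))/(s-r)$ for all $r<s$, and the right-hand side tends to $0$ as $r\to-\infty$; hence $\varphi'_{+}\ge 0$, so $\varphi$ is non-decreasing and therefore $\varphi\ge\varphi(-\infty)=0$.
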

The proof of this lemma is similar to that of Lemma \ref{no lines lemma} and is left to the reader.
\end{proof}

\begin{proof}[Proof of Corollary \ref{corollary for approximation of convex functions}]
$(1)\implies (2)$ is an obvious consequence of Theorem \ref{main thm for upper approximation of convex functions}, and $(2)\implies (3)$ is trivial. Let us see that $(3)\implies (1)$: assume $(1)$ is false; then there exists a line $\{x+tv\colon t\in\R\}\cap U=\{x+tv\colon t\in (a, b)\}$, on which $f$ is affine. Let $\varepsilon\colon \R^n\to [0, 1]$ be of class $C^1$ and such that $\lim_{|x|\to\infty}\varepsilon(x)=0$ and (if $U\neq\R^n$) also $\R^n\setminus U=\varepsilon^{-1}(0)$. By the assumption there exists a strictly convex function $g\colon U\to\R$ such that $|f-g|<\varepsilon$. Then, by applying Lemma \ref{no lines lemma} with $\varphi(t):=g(x+tv)-f(x+tv)$ and $\psi(t)=\varepsilon(x+tv)$, we deduce that $g(x+tv)=f(x+tv)$ for all $t$. This contradicts that $g$ is strictly convex.
\end{proof}

\section{Proofs of Theorems \ref{main thm on outer approximation} and \ref{main thm on inner approximation}.}\label{S4}

Besides Theorems \ref{main thm for lower approximation of convex functions} and \ref{main thm for upper approximation of convex functions}, in the proofs of Theorems \ref{main thm on outer approximation} and \ref{main thm on inner approximation} we will use the following lemmas.

\begin{lem}[See \cite{AzagraHajlasz}, Lemma 3.2]\label{mu is essentially coercive}
Let $W\subset \R^n$ be a closed convex set such that $0\in\mathrm{int}(W)$, and let $\mu=\mu_{W}$ denote the Minkowski functional of $W$. The following assertions are equivalent:
\begin{enumerate}
\item[{(a)}] $W$ does not contain any line. 
\item[{(b)}] $\partial W$ does not contain any line.
\item[{(c)}] $\mu^{-1}(0)$ does not contain any line
\item[{(d)}] $\mu$ is essentially coercive.
\end{enumerate}
\end{lem}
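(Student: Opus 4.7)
The plan is to establish the four equivalences via (a) $\Leftrightarrow$ (b), (a) $\Leftrightarrow$ (c), and (a) $\Leftrightarrow$ (d), leaning on the identification of $\mu^{-1}(0)$ with the recession cone of $W$ and on the characterization of essential coercivity already invoked in the introduction (\cite[Lemma 4.2]{Azagra} or \cite[Theorem 1.11]{AzagraMudarra}).

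For (a) $\Leftrightarrow$ (b), the direction (a) $\Rightarrow$ (b) is immediate from $\partial W \subset W$. For the converse I argue by contrapositive: if $W$ contains a line in direction $v$, then convexity and closedness make both $v$ and $-v$ recession directions, so $W = W + \R v$ and the orthogonal projection $\pi(W)$ onto $v^{\perp}$ equals $W \cap v^{\perp}$. Assuming $W \ne \R^n$, this projection is a proper closed convex subset of $v^{\perp}$; I would pick $p$ on its relative boundary and verify that the whole line $p + \R v$ sits in $\partial W$, because a putative interior point $p + t_0 v$ would project to an open neighborhood of $p$ in $v^{\perp}$ contained in $\pi(W)$, contradicting $p \in \partial(\pi(W))$.

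For (a) $\Leftrightarrow$ (c), I first identify $\mu^{-1}(0)$ with the recession cone of $W$: $\mu(x)=0$ iff $x\in tW$ for every $t>0$ iff the ray $\{sx:s\geq 0\}$ lies in $W$, which uses $0 \in \mathrm{int}(W)$ together with closedness of $W$. The set $\mu^{-1}(0)$ is then a closed convex cone with vertex at $0$, so a short limit argument (taking $v$ to be the direction of any contained line $L \ni p$ and using $(p+tv)/t \to v$ together with closure under positive scaling) shows that $\mu^{-1}(0)$ contains a line iff it contains a line through $0$ iff $W$ contains a line.

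For (a) $\Leftrightarrow$ (d), I would apply the cited characterization that $\mu$ is essentially coercive iff the graph of $\mu$ contains no line, so it remains to compare lines in the graph of $\mu$ with lines in $W$. Since $\mu \geq 0$ is convex on $\R^n$, an affine restriction of $\mu$ to a line is forced to be constant equal to some $c \geq 0$: if $c=0$ the line lies in $\mu^{-1}(0) \subset W$, while if $c>0$ it lies in $c\,\partial W$, and in either case (a) $\Leftrightarrow$ (b),(c) furnish a line in $W$. Conversely, a line in $W$ yields, through (a) $\Leftrightarrow$ (b), a line in $\partial W$ on which $\mu \equiv 1$, hence a line in the graph. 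The main obstacle is the projection-boundary argument in (b) $\Rightarrow$ (a); the remaining implications are routine bookkeeping once the recession-cone picture and the cited theorem are in place.
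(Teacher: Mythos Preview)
The paper does not supply its own proof of this lemma: it is quoted verbatim from \cite[Lemma 3.2]{AzagraHajlasz}, so there is nothing in the present paper to compare your argument against. Your proposal is a correct self-contained proof, and the overall scheme (recession-cone identification of $\mu^{-1}(0)$, the standard line-in-boundary argument via the cylinder decomposition $W=(W\cap v^{\perp})+\R v$, and the reduction of essential coercivity to the ``no line in the graph'' criterion from \cite{Azagra,AzagraMudarra}) is exactly the natural one and is in the spirit of the cited reference.

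Two minor points worth tightening. First, your handling of the degenerate case $W=\R^n$ is only made explicit in the $(\mathrm{b})\Rightarrow(\mathrm{a})$ step; note that if $W=\R^n$ then $(\mathrm{b})$ holds vacuously while $(\mathrm{a}),(\mathrm{c}),(\mathrm{d})$ all fail, so the lemma tacitly assumes $W\neq\R^n$ (which is how it is used in Section~\ref{S4}, where $\partial V\neq\varnothing$). Second, in your $(\mathrm{a})\Leftrightarrow(\mathrm{d})$ argument you implicitly use that $\partial W=\mu^{-1}(1)$; this is true here because $0\in\mathrm{int}(W)$ forces $\mu(x)<1$ for $x\in\mathrm{int}(W)$ and $\mu(x)=1$ for $x\in\partial W$, but it is worth stating, since for general closed convex $W$ the level set $\mu^{-1}(1)$ can be a proper subset of $\partial W$. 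With these clarifications your argument is complete.
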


\begin{lem}\label{If S does not contain rays then S is a graph}
Let $S=\partial V$, where $V$ is a closed convex set $V$ with nonempty interior in $\R^n$. If $S$ does not contain any rays and is unbounded, then $S$ can be regarded (up to a suitable rotation and translation) as the graph of a convex function $f:U\subseteq\R^{n-1}\to\R$ such that $\lim_{y\in U, |y|\to\infty}f(y)=\infty$ (if $U$ is unbounded) and $\lim_{y\to x}f(y)=\infty$ for every $x\in\partial U$ (if $U\neq\R^{n-1}$). In particular $f$ is properly convex and its graph contains no ray.
\end{lem}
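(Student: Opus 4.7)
The plan is to realize $V$ as the epigraph of a convex function $f$ after an appropriate rotation. Translate so that $0 \in \mathrm{int}(V)$, and let $K$ denote the recession cone of $V$; since $V$ is unbounded, $K \neq \{0\}$. The hypothesis that $\partial V$ contains no rays implies in particular that $\partial V$ contains no lines, which in turn forces $K$ to be pointed (contain no lines): if $\pm v \in K$ and $x \in \partial V$, then the entire line $x + tv$ lies in $V$, and a standard convexity argument shows it must lie entirely in $\partial V$, contradicting the absence of lines in $\partial V$.

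The main technical step is to select $e \in K$ with the additional property $K \cap e^\perp = \{0\}$. Note that the separating hyperplane of Theorem~\ref{SeparationTheorem} produces a normal that strictly separates $K \setminus \{0\}$ from the origin but need not lie in $K$, while an arbitrary nonzero element of $K$ is not enough to force $f$ to blow up at infinity of its domain; we need both conditions at once. Consider the compact convex set $A := \mathrm{conv}(K \cap \mathbb{S}^{n-1})$. Since $K$ is pointed, $0 \notin A$ (otherwise, in a convex combination $0 = \sum \lambda_i v_i$ one could isolate some $-\lambda_1 v_1 = \sum_{i\geq 2}\lambda_i v_i \in K$, so both $v_1$ and $-v_1$ would lie in $K$, a contradiction). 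Let $e$ be the point of $A$ closest to the origin. Then $e \in A \subset K$, and by the standard projection inequality $\langle e, y\rangle \geq |e|^2 > 0$ for every $y \in A$, hence for every $y \in K \setminus \{0\}$. After a rotation we may assume $e$ is a positive multiple of $e_n$, which gives both $V + te_n \subset V$ for all $t \geq 0$ and $K \setminus \{0\} \subset \{y_n > 0\}$.

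Let $\pi\colon\R^n \to \R^{n-1}$ be the projection onto the first $n-1$ coordinates, set $U := \pi(V)$, and define $f(y) := \inf\{z : (y, z) \in V\}$. Then $U$ is convex, and it is open because for each $y_0 \in U$ the half-line $\{(y_0, z_0 + t) : t \geq 0\}$ is contained in $V$ but cannot lie entirely in $\partial V$ (such a ray is excluded by hypothesis), so it meets $\mathrm{int}(V)$ and delivers a neighborhood of $y_0$ inside $U$. The closedness of $V$ together with $-e_n \notin K$ makes $f(y)$ finite and attained (so $(y, f(y)) \in \partial V$); meanwhile $e_n \in K$ forces $V$ to coincide with the epigraph of $f$ over $U$, and convexity of $f$ is inherited from $V$. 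To verify the limit conditions: if $y_k \in U$ with $|y_k| \to \infty$ and $f(y_k)$ bounded, then, after passing to a subsequence, $(y_k, f(y_k))/\|(y_k, f(y_k))\|$ converges to a unit vector $(v, 0) \in K$, contradicting $K \cap \{y_n = 0\} = \{0\}$; if $y_k \to x \in \partial U$ with $f(y_k)$ bounded, then $(y_k, f(y_k)) \to (x, z) \in V$, forcing $x \in U$ and contradicting openness of $U$. These limits yield proper convexity of $f$ and rule out rays in its graph (a classical ray in the graph would sit inside $\partial V$, while a graph-ray approaching $\partial U$ would contradict the blow-up of $f$ there). The principal obstacle in the whole plan is the simultaneous choice of $e \in K$ with $K \cap e^\perp = \{0\}$ carried out in the second paragraph; the rest is a relatively routine unpacking of the epigraph picture.
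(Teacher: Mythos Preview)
Your proof is correct and follows essentially the same route as the paper's: both use the recession cone $K$, and your key step of choosing $e$ as the point of $\mathrm{conv}(K\cap\mathbb{S}^{n-1})$ closest to the origin is precisely the ``explicit construction with the closest point $\zeta$'' from the proof of Theorem~\ref{SeparationTheorem} that the paper invokes---so the normal to \emph{that} particular separating hyperplane does lie in $K$, contrary to your parenthetical worry. Your write-up is a bit more thorough in spots (you argue openness of $U$ and finiteness of $f$ explicitly, which the paper leaves implicit), but the architecture is the same.
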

\begin{proof}
Since $S$ does not contain any line, nor does $V$ (according to the preceding lemma). And since $S$ is unbounded, so is $V$, hence $V$ contains a ray. Consider the maximal inscribed cone of~$V$:
\eq{
K = \Set{e\in\R^n}{\exists x\in U \text{ such that } \set{x+te}{t > 0} \subset V}.
}
The cone~$K$ is non-empty, closed, convex, and does not contain lines. Consider the hyperplane~$H$ constructed in the proof of Theorem~\ref{SeparationTheorem} (we need the explicit construction with the closest point~$\zeta$ presented in the proof). Let us introduce the orthogonal coordinates  such that~$H = \set{x\in \R^n}{x_n=0}$ and~$x_n > 0$ on~$K$. Note that in such a case~$\zeta$ lies on the~$Ox_n$ axis, which yields the ray~$(0,0,\ldots,0,t)$, where~$t\in \R_+$, belongs to~$K$. We will call this ray the positive half of the~$Ox_d$-axis.

Let~$P_H$ be the orthogonal projection of~$\R^n$ onto~$H$. Set~$U = P_H(V)$ and~$f(y) = \inf \set{t\in\R}{(y,t)\in V}$, here~$y\in U$. Let us prove that this choice indeed fulfills the requirements. First, since the positive half of the~$Ox_d$ axis lies in~$K$, any ray of the form~$\set{x+(0,0,\ldots,0,t)}{t > 0}$ lies in~$V$, provided~$x\in V$. Thus,~$V$ is indeed the epigraph of~$f$. 

Second, let us check two limit assertions. Similar to~\eqref{StrictPositivity},
\eq{
K \subset \Set{y\in \R^n}{y_n > \delta \|y\|}
}
for a sufficiently small number~$\delta>0$. This, in particular, leads to the bound~$f(y) \geq \delta \|y\| - C$ for a sufficiently large constant~$C$. Therefore,~$f(x)$ tends to infinity as~$x$ tends to infinity inside~$U$, supporting the first limit assertion. On the other hand, if $U\neq\R^{n-1}$ and $x\in\partial U$, the limit $\beta:=\lim_{y\to x}f(y)$ exists in $(0, +\infty]$. If $\beta$ were finite then $S$ would contain the ray $\{(x, t) \colon t\geq\beta\}$, contradicting the assumption that $S$ contain no rays. Therefore $\beta$ is infinite, and the second limit assertion is also verified. This also shows~$S$ is the graph of~$f$.
\end{proof}

\begin{proof}[Proof of Theorem \ref{main thm on outer approximation}]
$(1)\implies (3)$: If $S$ is unbounded then this implication is an immediate consequence of Theorem \ref{main thm for lower approximation of convex functions} and Lemma \ref{If S does not contain rays then S is a graph}. On the other hand, if $S$ is compact, the result is well known. Nevertheless, for completeness, let us provide a short proof of this case based on the preceding results. We may assume that $0\in\textrm{int}(V)$ and $U$ is of the form $\varphi^{-1}(1-2\varepsilon, 1+2\varepsilon)$, where $\varphi=\mu^2$, $\mu$ denoting the Minkowski functional of $V$, and $\varepsilon$ is a positive constant. The function $\varphi$ is convex and coercive, and its graph does not contain any ray. By Theorem \ref{main thm for lower approximation of convex functions} there exists a real-analytic strongly convex function $g\colon\R^n\to\R$ such that $\varphi-\varepsilon<g<\varphi$. Let us define $W=g^{-1}(-\infty, 1]$. Then $S_U:=\partial W=g^{-1}(1)$ is a real-analytic strongly convex hypersurface with $S_U\subset U\setminus V$.

\noindent $(3)\implies (2)$ is obvious. 

\noindent $(2)\implies (1)$: this can be proved similarly to $(3)\implies (1)$ of Theorem \ref{main thm for lower approximation of convex functions}. The details are left to the reader.
\end{proof}

\begin{proof}[Proof of Theorem \ref{main thm on inner approximation}]

\noindent $(1)\implies (3)$: By Lemma \ref{mu is essentially coercive}, the Minkowski functional of $V$, which we will denote $\mu$, is essentially coercive (and in particular its graph does not contain any line). Given an open set $U\supset S$, by using partitions of unity for instance, it is not difficult to construct a continuous function $\varepsilon\colon\R^n\to (0,1]$ such that
$$
\mu(x)+\varepsilon(x)<1 \textrm{ for all } x\in V\setminus U.
$$
Then we may apply Theorem \ref{main thm for upper approximation of convex functions} to find a real-analytic strongly convex function $g\colon\R^n\to\R$ such that $\mu<g<\mu+\varepsilon$. Define $S_U=g^{-1}(1)$. It is clear that $S_U$ is a real-analytic strongly convex hypersurface (the boundary of the convex body $g^{-1}(-\infty, 1]$). If $x\in V\setminus U$, then we have $g(x)<\mu(x)+\varepsilon(x)<1$, and if $x\in\R^n\setminus \textrm{int}(V)$, then $g(x)>\mu(x)\geq 1$. Therefore, $S_U=g^{-1}(1)\subset U\cap \textrm{int}(V)$.

\noindent $(3)\implies (2)$ is trivial.

\noindent $(2)\implies (1)$ is similar to $(3)\implies (1)$ of Theorem \ref{main thm for upper approximation of convex functions}. The details are left to the reader.
\end{proof}

\section{Proof of Theorem \ref{C1 fine result}.}\label{S5}

Let us gather some preliminary results that we will be using in the proof. The following theorem is well known; see, for instance, \cite[Theorem 25.7]{Rockafellar}).

\begin{thm}\label{automatic convergence of gradients}
Let $U$ be a nonempty open convex subset of $\R^n$, let $f:U\to\R$ be a differentiable convex function, and $(f_k)$ be a sequence of differentiable convex functions such that $f(x)=\lim_{k\to\infty}f_k(x)$ for every $x\in U$. Then $Df_k$ converges to $Df$, uniformly on each compact subset of $U$.
\end{thm}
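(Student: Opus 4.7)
The plan is to combine two standard facts about convex functions with a direct one-sided squeeze on the gradient. The first fact is that pointwise convergence of convex functions on an open convex set is automatically uniform on compact subsets; the second is that a convex function on an open convex set which is differentiable everywhere is automatically of class $C^1$, because the subdifferential $\partial f$ is upper semicontinuous as a set-valued map and is single-valued at every point of $U$. With $f\in C^1(U)$ and $f_k\to f$ locally uniformly, the gradient convergence will then fall out of the elementary inequality that sandwiches $\langle Df_k(x),v\rangle$ between one-sided difference quotients of $f_k$.

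To obtain local uniform convergence I would first prove local equi-boundedness of $(f_k)$: enclose a compact set $K\subset U$ in a compact simplex $\Sigma\subset U$ with vertices $v_0,\dots,v_n$, so that convexity yields $f_k\le\max_j f_k(v_j)$ on $\Sigma$, and the numbers $f_k(v_j)$ are bounded because they converge. A lower bound on $\Sigma$ follows from pinning down an interior point $x_0$ and choosing a supporting affine minorant $f_k(x_0)+L_k(\fdot-x_0)$, whose slope $L_k$ is controlled by $f_k(x_0)$ together with the upper bound on a ball around $x_0$. Local equi-boundedness combined with convexity yields local equi-Lipschitz bounds in the standard way, and Arzel\`a--Ascoli then upgrades pointwise convergence to uniform convergence on compact subsets of $U$. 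The $C^1$ regularity of the limit $f$ additionally makes $Df$ uniformly continuous on every compact thickening of $K$.

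The core estimate is then direct. Fix a compact $K\subset U$, pick an open $W$ with $K\subset W\subset\overline{W}\subset U$, put $\varepsilon_k:=\sup_{\overline W}|f_k-f|\to 0$, and take any $h\in(0,d(K,\partial W))$. For $x\in K$ and $v\in\mathbb{S}^{n-1}$, convexity of $f_k$ gives
\[
\frac{f_k(x)-f_k(x-hv)}{h}\le \langle Df_k(x),v\rangle\le\frac{f_k(x+hv)-f_k(x)}{h},
\]
and the analogous inequality holds for $f$. Replacing $f_k$ by $f$ in the outer expressions costs at most $2\varepsilon_k/h$, while uniform continuity of $Df$ on $\overline W$ makes the two difference quotients for $f$ agree with $\langle Df(x),v\rangle$ up to some $\eta(h)$ with $\eta(h)\to 0$ as $h\to 0$, uniformly in $x\in K$ and $v$. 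Taking the supremum over $v$ yields $\|Df_k(x)-Df(x)\|\le \eta(h)+2\varepsilon_k/h$ uniformly in $x\in K$. The only genuine subtlety is the order of quantifiers: since $\varepsilon_k/h$ explodes when $h\to 0$ at fixed $k$, one must first fix $h$ and let $k\to\infty$ to get $\limsup_k\sup_{x\in K}\|Df_k(x)-Df(x)\|\le \eta(h)$, and only then send $h\to 0$. I expect this step, together with the derivation of $f\in C^1$ from pointwise differentiability plus convexity, to be the main point to watch; neither poses a real obstacle.
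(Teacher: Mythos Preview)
The paper does not prove this theorem at all; it merely records it as well known and cites Rockafellar's \emph{Convex Analysis}, Theorem~25.7. Your argument is correct and essentially reconstructs the classical proof behind that reference: you upgrade pointwise convergence to locally uniform convergence via equi-Lipschitz bounds, observe that a convex function differentiable everywhere is automatically $C^1$, and then run the two-sided difference-quotient sandwich, taking care with the order of the limits in $k$ and $h$. One minor technical quibble: an arbitrary compact $K\subset U$ need not fit inside a single simplex contained in $U$, but since your boundedness argument is local, this is easily repaired by covering $K$ with finitely many such simplices (or simply by working on small balls and using the standard reflection trick $f_k(x)\ge 2f_k(x_0)-M$).
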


The following fact about smooth maxima is shown in \cite[Lemma 7.1]{Azagra}.
\begin{lem}\label{estimate of first derivatives of smooth maxima}
Let $M_\delta$ the smooth maximum of Proposition \ref{properties of M(f,g)}, and let $V\subseteq\R^n$ be an open set.
If $\varphi, \psi\in C^{1}(V)$, then
$$
\| DM_{\delta}(\varphi, \psi)-\frac{D\varphi +D\psi}{2}\|\leq\frac{1}{2}\|D\varphi-D\psi\|.
$$
\end{lem}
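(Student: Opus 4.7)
The plan is to unfold the definition of the smooth maximum and differentiate. Recall that $M_\delta(x,y) = \tfrac{1}{2}\bigl(x+y+\theta(x-y)\bigr)$ for a certain $C^\infty$ convex even function $\theta$ with Lipschitz constant $1$. Thus, for $\varphi,\psi\in C^1(V)$,
\[
M_\delta(\varphi,\psi)(x) = \frac{\varphi(x)+\psi(x)+\theta\bigl(\varphi(x)-\psi(x)\bigr)}{2}.
\]
I would then simply apply the chain rule pointwise. Since $\varphi,\psi$ are $C^1$ on the open set $V$ and $\theta$ is $C^\infty$, so is the composition, and one gets
\[
DM_\delta(\varphi,\psi) = \frac{D\varphi + D\psi + \theta'(\varphi-\psi)\,(D\varphi-D\psi)}{2}.
\]

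The second step is algebraic rearrangement: subtract $(D\varphi+D\psi)/2$ from both sides to obtain
\[
DM_\delta(\varphi,\psi) - \frac{D\varphi+D\psi}{2} = \frac{\theta'(\varphi-\psi)}{2}\,\bigl(D\varphi-D\psi\bigr).
\]
Taking Euclidean norms and using $|\theta'(t)|\leq \operatorname{Lip}(\theta) = 1$ for all $t\in\R$ (which is the property $(3)$ of $\theta$ that is built into the definition of $M_\delta$), we conclude
\[
\Bigl\|DM_\delta(\varphi,\psi) - \tfrac{D\varphi+D\psi}{2}\Bigr\| = \frac{|\theta'(\varphi-\psi)|}{2}\,\|D\varphi-D\psi\| \leq \tfrac{1}{2}\|D\varphi-D\psi\|.
\]

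There is essentially no obstacle here; the only point requiring care is noting that the bound $|\theta'|\leq 1$ is exactly the Lipschitz constant hypothesis on $\theta$, so the estimate is sharp and independent of $\delta$. Everything else is a one-line chain rule computation.
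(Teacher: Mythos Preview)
Your argument is correct: the chain rule gives $DM_\delta(\varphi,\psi)-\tfrac{1}{2}(D\varphi+D\psi)=\tfrac{1}{2}\theta'(\varphi-\psi)(D\varphi-D\psi)$, and $|\theta'|\le\operatorname{Lip}(\theta)=1$ yields the bound. The paper does not supply its own proof here but simply cites \cite[Lemma~7.1]{Azagra}; your computation is exactly the standard one that reference records.
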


We will also use the following consequence of Theorems \ref{NoLinesTheorem}, \ref{main thm for lower approximation of convex functions} and \ref{automatic convergence of gradients}. 

\begin{lem}\label{C1approxlemma}
Let $U$ be a nonempty open convex subset of $\R^n$, and $f:U\to\R$ be convex and $C^1$. Assume that the graph of $f$ contains no lines. Then, for every continuous function $\varepsilon:U\to (0,1)$ and every compact set $K\subset U$ there exist a compact set $C$ and a convex $C^1$ function $\psi:U\to\R$ such that:
\begin{enumerate}
\item $f\leq \psi<f+\varepsilon$ on $U$;
\item $f<\psi$ on $K$;
\item $K\subset \textrm{int}(C)\subset C \subset U$;
\item $f=\psi$ on $U\setminus C$;
\item $\psi$ is strongly convex on $\textrm{int}(C)$, and
\item $\|D\psi-Df\|<\varepsilon$ on $U$.
\end{enumerate}
\end{lem}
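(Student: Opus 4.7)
My approach is to combine, via smooth maxima (Proposition~\ref{properties of M(f,g)}), a real-analytic strongly convex global approximation of $f$ produced by Theorem~\ref{main thm for upper approximation of convex functions} with a finite family of affine functions delivered by Theorem~\ref{NoLinesTheorem} that exceed $f$ at prescribed points of $K$ but lie uniformly below $f$ outside given compact sets. The plan is first to build a $C^\infty$ strongly convex $\tilde G\colon U\to\R$ satisfying $\tilde G>f+2\delta'$ on a neighborhood of $K$ and $\tilde G<f-2\delta'$ outside some compact $C_0\subset U$, and then to define $\psi:=M_{\delta'}(f,\tilde G)$, whose domain of disagreement with $f$ will be the required compact set $C$.

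For each $x\in K$, Theorem~\ref{NoLinesTheorem} yields a compact $C_x\subset U$ and an affine $A_x$ with $A_x(x)>f(x)$ and $A_x<f$ on $U\setminus C_x$; a small downward shift (combined with a quantitative inspection of the proof of that theorem) gives a uniform negative gap $A_x<f-\gamma_x$ there. A finite subcover of $K$ by the open sets $\{A_{x_j}>f\}$ produces affine functions $A_1,\dots,A_N$, compact sets $C_1,\dots,C_N$, and positive constants $\sigma,\gamma$ such that on the said cover some $A_j>f+\sigma$ while $A_j<f-\gamma$ on $U\setminus C_j$; set $C_0:=\bigcup_j C_j$. Next, Theorem~\ref{main thm for upper approximation of convex functions} produces, for a tiny parameter $\eta>0$ to be chosen later, a real-analytic strongly convex $g\colon U\to\R$ with $f<g<f+\eta$; passing to a pointwise-convergent sequence, Theorem~\ref{automatic convergence of gradients} lets us also arrange $\|Dg-Df\|<\varepsilon/3$ on a compact neighborhood of $C_0$.

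For a small $\tau\in(0,1)$ I set $G_j:=(1-\tau)g+\tau A_j$, which are real-analytic strongly convex; when $\eta\ll\tau\gamma$, these satisfy $G_j>f+\tau\sigma/2$ on the cover of $K$ and $G_j<f-\tau\gamma/2$ outside $C_j$. Put $\tilde G:=M_{\delta_M}(G_1,M_{\delta_M}(G_2,\dots,G_N))$ with $\delta_M$ so small that $(N-1)\delta_M/2\ll\tau\gamma$; by items~(5) and~(9) of Proposition~\ref{properties of M(f,g)}, $\tilde G$ is a $C^\infty$ strongly convex function, with $\tilde G>f+2\delta'$ on a neighborhood of $K$ and $\tilde G<f-2\delta'$ outside $C_0$, for a suitable $\delta'>0$. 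Finally define $\psi:=M_{\delta'}(f,\tilde G)$ and $C:=\{x\in U\colon f(x)-\tilde G(x)\leq\delta'\}$. Then $C$ is a compact subset of $C_0$ with $K\subset\mathrm{int}(C)$, and items~(2)--(5) of Proposition~\ref{properties of M(f,g)} yield at once conditions (3), (4), the inequality $f\leq\psi\leq\max(f,\tilde G)+\delta'/2$, and the facts that $\psi\in C^1(U)$ and $\psi>f$ on $K$. Since $D\psi$ is a convex combination of $Df$ and $D\tilde G$, and $D\tilde G$ is a convex combination of the gradients $DG_j=(1-\tau)Dg+\tau DA_j$, a direct estimate yields $\|D\psi-Df\|\leq\|Dg-Df\|+\tau\max_j\|DA_j-Df\|$ on $C_0$. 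Shrinking $\eta,\tau,\delta_M,\delta'$ then forces (1) and (6).

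The main technical obstacle is the verification of strong convexity of $\psi$ on $\mathrm{int}(C)$ in the paper's sense, since $f$ is only $C^1$ and $\psi$ coincides with neither $f$ nor $\tilde G$ on the transition zone $|\tilde G-f|<\delta'$. A direct distributional computation from $\psi=\tfrac12(f+\tilde G+\theta(f-\tilde G))$ yields
\eqq{
D^2\psi=\frac{\theta''(f-\tilde G)}{2}(Df-D\tilde G)(Df-D\tilde G)^T+\alpha D^2 f+\beta' D^2\tilde G,
}
with $\alpha,\beta'\geq 0$, $\alpha+\beta'=1$, and $\beta'(x)=\tfrac{1-\theta'(f(x)-\tilde G(x))}{2}$ strictly positive on $\mathrm{int}(C)=\{f-\tilde G<\delta'\}$ by the defining properties of $\theta$. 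Using $\theta''\geq 0$ together with the fact that $\tilde G$ is $C^2$ strongly convex with some constant $\mu>0$, this identity gives the distributional bound $D^2\psi\geq\beta'(x)\mu\,I$ on $\mathrm{int}(C)$. Given any $x_0\in\mathrm{int}(C)$, continuity of $\beta'$ provides a ball $B(x_0,r)$ and $\beta_0>0$ with $\beta'(x)\geq\beta_0$ on it, and then the $C^\infty$ strongly convex quadratic $\psi_{x_0}(x):=\tfrac{\beta_0\mu}{4}\|x-x_0\|^2$ satisfies $D^2(\psi-\psi_{x_0})\geq 0$ distributionally on $B(x_0,r)$. Since $\psi-\psi_{x_0}$ is a $C^1$ function, this distributional inequality implies that $\psi-\psi_{x_0}$ is convex, which is precisely the paper's definition of strong convexity of $\psi$ at $x_0$; hence (5) holds.
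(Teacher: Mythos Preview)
Your route is genuinely different from the paper's. The paper never forms a smooth maximum $M_{\delta'}(f,\tilde G)$ with the non-$C^2$ function $f$; instead it works locally. For each $x\in K$ it takes the compact convex set $D_x=\{y:f(y)\le A_x(y)\}$, applies Theorem~\ref{main thm for lower approximation of convex functions} on $\mathrm{int}(D_{x_j})$ (where the restriction of $f$ has a graph with no rays) to produce a strongly convex $C^\infty$ function $\varphi_j$ with $f<\varphi_j<f+\varepsilon_j$, extends $\varphi_j$ by $f$ outside $\mathrm{int}(D_{x_j})$ (this extension is $C^1$ because $\varepsilon_j$ is $C^1$ and vanishes on $\partial D_{x_j}$), and then simply \emph{averages}: $\varphi=\tfrac1m\sum_j\varphi_j$, with $C=\bigcup_jD_{x_j}$. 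The gradient estimate (6) is obtained afterwards by rerunning the same construction with $\varepsilon/k$ and invoking Theorem~\ref{automatic convergence of gradients} on the fixed compact $C$. This avoids both the iterated smooth maxima and the distributional Hessian computation you carry out; what your approach buys in exchange is a single globally defined $C^\infty$ strongly convex $\tilde G$ and a very explicit description of the transition zone.

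Your argument is largely correct, but there is a gap exactly at the point you identified as delicate. You assert that $\mathrm{int}(C)=\{f-\tilde G<\delta'\}$ and verify strong convexity only on the right-hand set. The inclusion $\{f-\tilde G<\delta'\}\subset\mathrm{int}(C)$ can be strict: any interior point $x_0$ of $C$ with $f(x_0)-\tilde G(x_0)=\delta'$ is a local maximum of $f-\tilde G$, and there your coefficient $\beta'(x_0)=\tfrac{1-\theta'(\delta')}{2}=0$, so the bound $D^2\psi\ge\beta'\mu I$ collapses. This is not a phantom: in the smooth model $f(t)=t^4/12$, $\tilde G(t)=t^2-\delta'$ one has $C=[-\sqrt{12},\sqrt{12}]$, $0\in\mathrm{int}(C)\setminus\{f-\tilde G<\delta'\}$, and a direct computation gives $\psi''(0)=0$, so $\psi$ fails the paper's definition of strong convexity at $0$. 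You do establish strong convexity on the open set $\{f-\tilde G<\delta'\}\supset K$, which is all that Proposition~\ref{C1 version of just convex approx from inside} actually consumes; but to obtain (5) on all of $\mathrm{int}(C)$ as stated you would need an additional device (e.g.\ perturbing $\tilde G$ or $\delta'$ so that $f-\tilde G$ has no local maximum with value $\delta'$ inside $C_0$). To be fair, the paper's averaging construction has an analogous soft spot at points of $\mathrm{int}\big(\bigcup_jD_{x_j}\big)\setminus\bigcup_j\mathrm{int}(D_{x_j})$.
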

\begin{proof}
For every $x\in K$, by Theorem \ref{NoLinesTheorem} there exist an affine function $A_x:\R^n\to\R$ and a compact set $C_x\subset U$ such that
$f(x)-A_x(x)<0$ and $f(y)-A_x(y)>0$ for all $y\in U\setminus C_x$. In particular $D_x:=\{y\in U : f(y)-A_x(y)\leq 0\}$ is a compact convex neighborhood of $x$. Since $K$ is compact, we may find finitely many points $x_1, ..., x_m\in K$ such that
$$
K\subset \bigcup_{j=1}^{m}\textrm{int}(D_{x_j}).
$$
Observe that the graph of the restriction of $f$ to $\textrm{int}(D_{x_j})$ cannot contain any line for any $j=1, ..., m$. For every $j=1, ..., m$, let $\varepsilon_j:U\to [0,1]$ be a $C^1$ function such that $\varepsilon_{j}^{-1}(0)=U\setminus\textrm{int}(D_{x_j})$ and $\varepsilon_j\leq\varepsilon$. According to Theorem \ref{main thm for lower approximation of convex functions}, for each $j$ there exists a strongly convex $C^{\infty}$ function $\varphi_j:\textrm{int}(D_{x_j})\to\R$ such that $f<\varphi_j<\varepsilon_j$ on $\textrm{int}(D_{x_j})$. For each $j$, we can extend $\varphi_j$ to all of $U$ by setting $\varphi_j=f$ on $U\setminus\textrm{int}(D_{x_j})$, and since $\varepsilon_j$ is of class $C^1$ and satisfies $\varepsilon_j=0$ on $U\setminus\textrm{int}(D_{x_j})$, we have that $\varphi_j$ is differentiable on $U$, which (because $\varphi_j$ is convex) amounts to saying that $\varphi_j\in C^1(U)$.

Let us call $C=\bigcup_{j=1}^{m}D_{x_j}$ and
$$
\varphi=\frac{1}{m}\sum_{j=1}^{m}\varphi_j.
$$
It is easy to check that $C$ and $\varphi$ satisfy properties $(1)-(5)$ of the statement (with $\varphi$ in place of $\psi$). 

Now, for each $k\in\N$, we may apply what we have just proved with $\varepsilon/k$ replacing $\varepsilon$, and we obtain a sequence $(\psi_k)$ of $C^1$ convex functions satisfying properties $(1)-(5)$ (with the same $C$) and also
$$
f\leq \psi_k\leq f+\frac{\varepsilon}{k} \textrm{ on } U
$$
for every $k$. Then, by Theorem \ref{automatic convergence of gradients}, $D\psi_k$ converges to $Df$ uniformly on $C$, and therefore we can find $k$ large enough so that
$$
\|D\psi_k(x)-Df(x)\|\leq \min_{y\in C}\varepsilon(y) \textrm{ for all } x\in C.
$$
Since $\psi_k=f$ on $U\setminus C$, we also have $D\psi_k=Df$ on $U\setminus C$, so by setting $\psi=\psi_k$ we get a $C^1$ convex function satisfying properties $(1)-(6)$.
\end{proof}

We are ready to establish a $C^1$-fine version of Corollary \ref{Just convex approximation from inside}.

\begin{prop}\label{C1 version of just convex approx from inside}
Let $U$ be an nonempty open convex subset of $\R^n$, $f:U\to\R$ a convex function, and $\varepsilon\colon U\to (0,1)$ a continuous function. Assume that the graph of $f$ contains no lines. Then there exists a $C^1$ strongly convex function $g: U\to \R$ such that
$$
f<g<f+\varepsilon \textrm{ and } 
\|Dg-Df\|< \varepsilon \textrm{ on } U.
$$
\end{prop}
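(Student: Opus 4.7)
The plan is to mimic the inductive construction in the proof of Theorem \ref{main thm for upper approximation of convex functions}, with Lemma \ref{C1approxlemma} taking the role of Corollary \ref{Just convex approximation from inside} combined with Theorem \ref{Uniform approximation by strongly convex functions}, and Lemma \ref{estimate of first derivatives of smooth maxima} supplying the gradient control that was absent from that earlier proof.

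First I would fix a compact exhaustion $K_1\subset\mathrm{int}(K_2)\subset K_2\subset\cdots$ of $U$ and inductively construct $C^1$ convex functions $\psi_j,g_j\colon U\to\R$ together with nested compact sets $C_1\subset\mathrm{int}(C_2)\subset C_2\subset\cdots$ satisfying $K_j\subset\mathrm{int}(C_j)$. The function $\psi_j$ is the output of Lemma \ref{C1approxlemma} applied to the compact set $C_{j-1}\cup K_j$ with a small error $\tilde\varepsilon_j\leq\varepsilon$; it is strongly convex on $\mathrm{int}(C_j)$, equals $f$ off $C_j$, and satisfies $f\leq\psi_j<f+\tilde\varepsilon_j$ and $\|D\psi_j-Df\|<\tilde\varepsilon_j$ on $U$. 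Setting $g_1:=\psi_1$ and $g_{j+1}:=M_{\delta_{j+1}}(g_j,\psi_{j+1})$ for carefully chosen $\delta_{j+1}>0$, I would maintain as inductive hypotheses that $f<g_j<f+\varepsilon$ with $f<g_j$ strictly on $K_j$, $\|Dg_j-Df\|<\varepsilon$ on $U$, $g_j$ is strongly convex on $\mathrm{int}(C_j)$, and $g_j=f$ off $C_j$ (this last property follows iteratively because $\psi_i=f$ off $C_i\subset C_j$ for $i\leq j$, together with $M_\delta(f,f)=f$).

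At step $j+1$, the choices $\tilde\varepsilon_{j+1}<\inf_{K_j}(g_j-f)/4$ and $\delta_{j+1}<\inf_{K_j}(g_j-\psi_{j+1})/2$ force the strict inequality $g_j-\psi_{j+1}>2\delta_{j+1}$ on an open neighborhood $W_j$ of $K_j$ (by continuity and compactness), so Proposition \ref{properties of M(f,g)}(3) yields $g_{j+1}=g_j$ on $W_j$. Choosing the $W_j$ to be nested (for instance, $W_j\subset\mathrm{int}(K_{j+1})\cap\mathrm{int}(C_j)$), the sequence $(g_j)$ stabilizes on $W_j$ from step $j$ onward; its pointwise limit $g$ thus coincides with $g_j$ on $W_j$, and since $U=\bigcup_j W_j$, the function $g$ is well-defined, $C^1$, and strongly convex throughout $U$. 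Gradient control is immediate from $DM_\delta(u,v)=\alpha Du+(1-\alpha)Dv$ with $\alpha(x)=(1+\theta'(u(x)-v(x)))/2\in[0,1]$, yielding $\|DM_\delta-Df\|\leq\max(\|Du-Df\|,\|Dv-Df\|)<\varepsilon$ pointwise; the value bounds follow from Proposition \ref{properties of M(f,g)}(5) together with summable choices of $\tilde\varepsilon_j$ and $\delta_j$.

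The main obstacle is verifying strong convexity of $g_{j+1}$ on $\mathrm{int}(C_{j+1})$. I would decompose this open set into the subset $\mathrm{int}(C_j)$, where both $g_j$ and $\psi_{j+1}$ are strongly convex by the inductive hypothesis, and its complement within $\mathrm{int}(C_{j+1})$, where $g_j=f$ and only $\psi_{j+1}$ is strongly convex. On the first piece, the identity $M_\delta(u,v)-c|x|^2/2=M_\delta(u-c|x|^2/2,v-c|x|^2/2)$ combined with Proposition \ref{properties of M(f,g)}(1) gives strong convexity of $g_{j+1}$ with a common modulus $c$ smaller than both moduli. On the second piece, where $g_{j+1}=M_{\delta_{j+1}}(f,\psi_{j+1})$ and this shift identity no longer directly works (since $f-c|x|^2/2$ need not be convex), strong convexity is established by the Hessian computation $D^2M_\delta=\alpha D^2f+(1-\alpha)D^2\psi_{j+1}+(\text{PSD})\geq(1-\alpha)\mu I\geq\tfrac{\mu}{2}I$, the last inequality using $\alpha\leq\tfrac{1}{2}$, which is valid because $f\leq\psi_{j+1}$ forces $\theta'(f-\psi_{j+1})\leq 0$. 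The case $f\in C^1\setminus C^2$ is handled by approximating $f$ (and hence $g_j$) locally by $C^\infty$ convex functions, applying the Hessian bound to the smoothed approximations, and passing to the pointwise limit, using that pointwise limits of convex functions remain convex so that $g_{j+1}-\tfrac{\mu}{4}|x|^2$ is convex on the relevant neighborhood.
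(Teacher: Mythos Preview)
Your approach via iterated smooth maxima can be made to work, but it is considerably more elaborate than the paper's proof, and your treatment of strong convexity has a loose end. The paper dispenses with smooth maxima entirely here: it simply applies Lemma~\ref{C1approxlemma} once for each $K_j$ (with error $\varepsilon/2$) to obtain $C^1$ convex functions $g_j$ satisfying $f\le g_j\le f+\varepsilon/2$, $f<g_j$ on $K_j$, $g_j=f$ off a compact $C_j$, $g_j$ strongly convex on $\mathrm{int}(C_j)$, and $\|Dg_j-Df\|\le\varepsilon/2$, and then sets
\[
g=\sum_{j=1}^\infty 2^{-j}g_j.
\]
All the conclusions are then immediate: the series and its term-by-term derivative converge uniformly on compacta (since $f\le g_j\le f+\varepsilon/2$ and $\|Dg_j-Df\|\le\varepsilon/2$), so $g\in C^1$; the bounds $f<g<f+\varepsilon$ and $\|Dg-Df\|<\varepsilon$ follow by summing; and strong convexity at any $x$ comes from the single summand $2^{-j}g_j$ with $x\in K_j\subset\mathrm{int}(C_j)$, since the remaining summands are convex. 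No gluing, no boundary analysis, no Hessian computations are needed.

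By contrast, your smooth-maximum construction forces you to analyze strong convexity of $M_{\delta}(g_j,\psi_{j+1})$ when only one argument is strongly convex, which Proposition~\ref{properties of M(f,g)} does not cover. Your two-piece decomposition of $\mathrm{int}(C_{j+1})$ into $\mathrm{int}(C_j)$ and its complement does not cleanly handle points of $\partial C_j\cap\mathrm{int}(C_{j+1})$: there $g_j$ need not equal $f$ on a full neighborhood, nor is $g_j$ strongly convex on a full neighborhood, so neither of your two arguments applies directly. The fix is to observe that $g_j(x_0)=f(x_0)<\psi_{j+1}(x_0)$ at such $x_0$ (since $x_0\in C_j$ lies in the compact set fed to Lemma~\ref{C1approxlemma}), hence $g_j<\psi_{j+1}$ on a neighborhood, and then run your mollification-and-limit argument with $g_j$ (not $f$) as the function being approximated. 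This works, but it is precisely the sort of technicality that the paper's averaging trick sidesteps. The smooth-maximum machinery you set up is the right tool for the proof of Theorem~\ref{C1 fine result} itself, where one genuinely needs the limit to be $C^\infty$ via local stabilization; for the present $C^1$ proposition it is unnecessary.
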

\begin{proof}
Let us fix a sequence of compact sets $(K_j)$ such that
$$
U=\bigcup_{j=1}^{\infty}K_j \,\,\, \textrm{ and} \,\,\, K_j \subset \textrm{int}(K_{j+1}) \textrm{ for every  } j\in \N.
$$
For every $j\in\N$, by Lemma \ref{C1approxlemma} there exist a compact set $C_j$ and a convex $C^1$ function $g_j:U\to\R$ such that:
\begin{enumerate}
\item $f\leq g_j\leq f+\varepsilon/2$ on $U$;
\item $f< g_j$ on $K_j$;
\item $K_j\subset \textrm{int}(C_j)\subset C_j \subset U$;
\item $f=g_j$ on $U\setminus C_j$;
\item $g_j$ is strongly convex on $\textrm{int}(C_j)$, and
\item $\|Dg_j-Df\|\leq \varepsilon/2$ on $U$.
\end{enumerate}
Let us define $$g=\sum_{j=1}^{\infty}\frac{1}{2^j}g_j.$$ It is routine to check that $g:U\to\R$ is of class $C^1$, strongly convex, and satisfies $f<g<f+\varepsilon$ and $\|Dg-Df\|< \varepsilon$ on $U$.
\end{proof}

Now let us proceed with the proof of Theorem \ref{C1 fine result}.
We only need to show that $(1)\implies (2)$ (the converse is easily shown as in the proof of Corollary \ref{corollary for approximation of convex functions}).

As in the proof of Theorem \ref{main thm for upper approximation of convex functions}, we may assume that $\lim_{|x|\to\infty}\varepsilon(x)=0$ and, if $U\neq\R^d$, that $\varepsilon$ has $C^1$ extension to all of $\R^d$, denoted still by $\varepsilon$, such that $\varepsilon^{-1}(0)=\R^d\setminus U$. Let us fix a sequence of compact sets $(K_j)$ such that
$$
U=\bigcup_{j=1}^{\infty}K_j \,\,\, \textrm{ and} \,\,\, K_j \subset \textrm{int}(K_{j+1}) \textrm{ for every  } j\in \N.
$$
By the preceding proposition there exist a compact set $C_1$ and a strongly convex $C^1$ function $h_1:U\to\R$ such that $h_1:U\to\R$ such that
$$
f<h_1<f+\varepsilon \textrm{ and } \|Dh_1-Df\|<\varepsilon \textrm{ on } U.
$$
Let us set
$$
\varepsilon_1 :=\inf_{x\in K_1}\{h_1(x)-f(x)\}.
$$
By continuity of $\varepsilon$ and compactness of $K_1$ there exists $m_1\in\N$ such that
$$
f+\frac{\varepsilon}{m_1}<h_1-\frac{2}{3}\varepsilon_1 \textrm{ on } K_1,
$$
and applying again Proposition \ref{C1 version of just convex approx from inside} we can take a $C^1$ strongly convex function $h_2\colon U\to\R$ such that
$$
f<h_2<f+\frac{\varepsilon}{m_1} \textrm{ and } \|Df- Dh_2\|\leq \frac{1}{4}\varepsilon \textrm{ on } U.
$$
Using the limiting properties of $\varepsilon$, the inequalities $f<h_1<f+\varepsilon$ and $f<h_2<f+\varepsilon/m_1$, and the fact that $U=\bigcup_{j=1}^{\infty} K_j$, we may find a number $n_2\in\N$ such that
$$
h_2>h_1-\frac{\varepsilon_{1}}{12} \textrm{ on } U\setminus K_{n_2}.
$$
We set $n_1=1$, 
$
\varepsilon_2=\inf_{y\in K_{n_2}}\{h_2(y)-f(y)\},
$
and find $n_3>n_2$ so that
$$
h_3>h_2-\frac{\varepsilon_2}{96} \textrm{ on } U.
$$
By continuing this process by induction, we obtain sequences $m_0=1<m_1<m_2<...$ and $n_1=1<n_2<n_3<...$ of positive integers, and $C^1$ strongly convex functions $h_j:U\to\R$, $j\in\N$, satisfying
$$
f+\frac{\varepsilon}{m_{j}}<h_j-\frac{2}{3}\varepsilon_j \textrm{ on } K_{n_j},
$$
$$
f<h_{j}<f+\frac{\varepsilon}{m_{j-1}} \textrm{ and } \|Dh_j-Df\|<\frac{1}{4}\varepsilon \textrm{ on } U,
$$
and
$$
h_{j+1}>h_j-\frac{\varepsilon_j}{2^{3j-1}\cdot 3} \textrm{ on } U\setminus K_{n_{j+1}},
$$
where
$$
\varepsilon_j :=\inf_{x\in K_{n_j}}\{h_j(x)-f(x)\}.
$$
Next, for every $j\in\N$, we may combine Theorems \ref{main thm for upper approximation of convex functions} and \ref{automatic convergence of gradients} in order to find a $C^{\infty}$ strongly convex function $\varphi_j:U\to\R$ such that
$$
h_j-\frac{\varepsilon_j}{2^{3j-3}\cdot 3}<\varphi_j<h_j-\frac{\varepsilon_j}{2^{3j-2}\cdot 3} \textrm{ on } U,
$$
and
$$
\|D\varphi_j-Dh_j\|<\frac{1}{4}\inf\{ \varepsilon(y) : y\in K_{n_{j+1}} \} \textrm{ on } K_{n_{j+1}}.
$$
Now let us call $\varphi_1=g_1$, and for every $j\geq 2$, define
$$
g_j=M_{\delta_j}\left( g_{j-1}, \varphi_j\right),
$$
where
$$
\delta_j=\frac{\varepsilon_j}{2^{3j-2}\cdot 3}.
$$
\begin{claim}\label{estimates on g}
For every $j\in\N$, $g_j$ is a $C^{\infty}$ strongly convex function satisfying:
\begin{enumerate}
\item $g_{j+1}=g_j$ on $K_{n_j}$
\item $g_{j+1}=\varphi_{j+1}$ on $U\setminus K_{n_{j+1}}$
\item $g_{j}\leq\max\{h_1, ..., h_{j}\}-\delta_j/2 <f+\varepsilon \textrm{ on } U$
\item $g_j\geq h_j-\varepsilon_j\geq f \textrm{ on } K_{n_j}$.
\end{enumerate}
\end{claim}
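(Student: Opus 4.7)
The plan is to argue by induction on $j$. The base case $j=1$: by construction $g_1=\varphi_1$ is $C^{\infty}$ strongly convex, and the inequalities $h_1-2\delta_1<\varphi_1<h_1-\delta_1$ together with $2\delta_1=\varepsilon_1/3\leq\varepsilon_1$ immediately yield items~(3) and~(4) at $j=1$; items~(1) and~(2) at $j=1$ involve $g_2$ and will be established in the inductive step. For the inductive step from $j$ to $j+1$, that $g_{j+1}=M_{\delta_{j+1}}(g_j,\varphi_{j+1})$ is $C^{\infty}$ and strongly convex is immediate from items~(2) and~(9) of Proposition~\ref{properties of M(f,g)}; the remaining tasks are to prove items~(1) and~(2) at level~$j$ and items~(3) and~(4) at level~$j+1$, all by applying Proposition~\ref{properties of M(f,g)}(3)--(5) to $M_{\delta_{j+1}}(g_j,\varphi_{j+1})$.

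For item~(1), that $g_{j+1}=g_j$ on $K_{n_j}$, the key lower bound is $g_j\geq\varphi_j>h_j-2\delta_j$ (valid on all of $U$ by the inequality $g_j\geq\varphi_j$ from Proposition~\ref{properties of M(f,g)}(5) and the construction of $\varphi_j$). On $K_{n_j}$, combining this with $\varphi_{j+1}<h_{j+1}-\delta_{j+1}$ and the construction inequality $h_{j+1}<f+\varepsilon/m_j<h_j-\tfrac{2}{3}\varepsilon_j$ yields $g_j-\varphi_{j+1}>\tfrac{2}{3}\varepsilon_j-2\delta_j+\delta_{j+1}>\delta_{j+1}$, since $2\delta_j=\varepsilon_j/(2^{3j-3}\cdot 3)\leq\tfrac{2}{3}\varepsilon_j$ for every $j\geq 1$; Proposition~\ref{properties of M(f,g)}(3) then gives item~(1). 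Item~(4) at level $j+1$ follows by the same kind of estimate at the shifted level: $g_{j+1}\geq\varphi_{j+1}>h_{j+1}-2\delta_{j+1}\geq h_{j+1}-\varepsilon_{j+1}\geq f$ on $K_{n_{j+1}}$.

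For item~(2), that $g_{j+1}=\varphi_{j+1}$ on $U\setminus K_{n_{j+1}}$, I would use the inductive instance of item~(2) at level $j-1$ (which for $j=1$ reduces to the identity $g_1=\varphi_1$), giving $g_j=\varphi_j$ on $U\setminus K_{n_j}\supset U\setminus K_{n_{j+1}}$ and hence $g_j<h_j-\delta_j$ there. Combining with the construction inequality $h_{j+1}>h_j-\delta_j/2$ on $U\setminus K_{n_{j+1}}$ and $\varphi_{j+1}>h_{j+1}-2\delta_{j+1}$ yields $\varphi_{j+1}-g_j>\delta_j/2-2\delta_{j+1}>\delta_{j+1}$, so Proposition~\ref{properties of M(f,g)}(4) gives item~(2). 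Item~(3) at level $j+1$ follows by inserting the inductive bound $g_j\leq\max\{h_1,\ldots,h_j\}-\delta_j/2$ and $\varphi_{j+1}<h_{j+1}-\delta_{j+1}$ into $g_{j+1}\leq\max\{g_j,\varphi_{j+1}\}+\delta_{j+1}/2$ (Proposition~\ref{properties of M(f,g)}(5)) and doing casework on the outer maximum, using $\delta_j\geq 2\delta_{j+1}$ in the case where $g_j$ dominates.

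The main obstacle I anticipate is the bookkeeping of the ratios $\delta_{j+1}/\delta_j=\varepsilon_{j+1}/(8\varepsilon_j)$: items~(2) and~(3) require $\delta_j$ to dominate a small fixed multiple of $\delta_{j+1}$, whereas $\varepsilon_{j+1}=\inf_{K_{n_{j+1}}}(h_{j+1}-f)$ is not a priori controlled by $\varepsilon_j$. The resolution is that the integer $m_j$ can be chosen arbitrarily large beyond its stated constraint on $K_{n_j}$, so one may additionally require $\varepsilon/m_j$ to be small enough on $K_{n_{j+1}}$ to force $\varepsilon_{j+1}\leq\varepsilon_j$; absorbing this into the choice of $m_j$ makes all the separation inequalities above hold with room to spare, and the induction closes cleanly.
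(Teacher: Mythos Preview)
Your induction is essentially the paper's proof: the paper also uses $g_j\geq\varphi_j$ together with $h_{j+1}<h_j-\tfrac{2}{3}\varepsilon_j$ on $K_{n_j}$ for item~(1), the inductive identity $g_j=\varphi_j$ on $U\setminus K_{n_j}$ together with $h_{j+1}>h_j-\delta_j/2$ on $U\setminus K_{n_{j+1}}$ for item~(2), and the same casework on $\max\{g_j,\varphi_{j+1}\}$ for item~(3).

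The obstacle you anticipate is not one: the inequality $\varepsilon_{j+1}\leq\varepsilon_j$ (indeed $\varepsilon_{j+1}<\tfrac{1}{3}\varepsilon_j$) is automatic from the construction, because on $K_{n_j}\subset K_{n_{j+1}}$ one has
\[
h_{j+1}-f<\frac{\varepsilon}{m_j}<h_j-\tfrac{2}{3}\varepsilon_j-f,
\]
and taking infima over $K_{n_j}$ yields $\varepsilon_{j+1}\leq\inf_{K_{n_j}}(h_{j+1}-f)<\varepsilon_j-\tfrac{2}{3}\varepsilon_j$. The paper invokes $\varepsilon_{\ell}\geq\varepsilon_{\ell+1}$ at exactly the same spot in its chain for item~(2), also without comment; so your concern is well placed, but no modification of the choice of $m_j$ is required (and note that your proposed fix as phrased refers to $K_{n_{j+1}}$, which is not yet defined when $m_j$ is chosen --- though it is easily repaired by using $\varepsilon<1$ globally).
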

\begin{proof}
By using Proposition \ref{properties of M(f,g)} we see that $g_{j+1}$ is a strongly convex $C^\infty$ function satisfying
$$
\max\{g_j, \varphi_{j+1}\}\leq g_j\leq \max\{g_j, \varphi_{j+1}\} +\delta_{j+1}/2.
$$
Since 
\begin{eqnarray*}
& &g_j\geq\varphi_j>h_j-\frac{\varepsilon_j}{2^{3j-3}\cdot 3}>h_{j+1}+\frac{2}{3}\varepsilon_j-\frac{\varepsilon_j}{2^{3j-3}\cdot 3} \\
& &> \varphi_{j+1}+\frac{\varepsilon_{j+1}}{2^{3(j+1)-2}\cdot 3}+\frac{2}{3}\varepsilon_j-\frac{\varepsilon_j}{2^{3j-3}\cdot 3}>\varphi_{j+1}+\delta_{j+1},
\end{eqnarray*}
Proposition \ref{properties of M(f,g)} also implies that
$$
g_{j+1}=M_{\delta_{j+1}}(g_j, \varphi_{j+1})=g_j \textrm{ on } K_{n_j},
$$
which shows $(1)$. As for $(4)$, we have
$$
g_{j}\geq\varphi_{j}>h_{j}-\frac{\varepsilon_{j}}{2^{3j-3}\cdot 3}>h_{j}-\varepsilon_{j}\geq f \textrm{ on } K_{n_{j}}.
$$
We show the rest of these properties by induction on $j$. On $U\setminus K_{n_2}$, we have
$$
\varphi_2>h_2-\varepsilon_2/24>h_1-\varepsilon_1/12-\varepsilon_2/24> g_1+\varepsilon_1/6 -\varepsilon_1/12-\varepsilon_2/24 >g_1+\delta_2,
$$
so we obtain that $g_2=\varphi_2$ outside $K_{n_2}$. Assuming that $(2)$ is true for $1\leq j\leq \ell-1$, let us see that $g_{\ell+1}=\varphi_{\ell+1}$ on $U\setminus K_{n_{\ell+1}}$. On $U\setminus K_{n_{\ell+1}}$ we have
\begin{eqnarray*}
& & \varphi_{\ell+1}>h_{\ell+1}-\frac{\varepsilon_{\ell+1}}{2^{3(\ell+1)-3}\cdot 3}>h_{\ell}-\frac{\varepsilon_j}{2^{3\ell-1}\cdot 3} -\frac{\varepsilon_{\ell+1}}{2^{3(\ell+1)-3}\cdot 3} \\
& & >\varphi_{\ell}+\frac{\varepsilon_\ell}{2^{3\ell-2}\cdot 3}-\frac{\varepsilon_\ell}{2^{3\ell-1}\cdot 3} -\frac{\varepsilon_{\ell+1}}{2^{3(\ell+1)-3}\cdot 3} \\
& & =g_{\ell}+\frac{\varepsilon_\ell}{2^{3\ell-1}\cdot 3}-\frac{\varepsilon_{\ell+1}}{2^{3(\ell+1)-3}\cdot 3}\geq 
g_{\ell}+\frac{\varepsilon_{\ell+1}}{2^{3\ell-1}\cdot 3}-\frac{\varepsilon_{\ell+1}}{2^{3(\ell+1)-3}\cdot 3}>g_{\ell}+\delta_{\ell+1},
\end{eqnarray*}
hence $g_{\ell+1}=M_{\delta_{\ell+1}}(g_{\ell}, \varphi_{\ell+1})=\varphi_{\ell+1}$ outside $K_{n_{\ell+1}}$. This proves $(2)$.

Finally, for $(3)$, we have
$$
g_2 \leq \max\{g_1, \varphi_2\}+\delta_2/2 \leq\max\{h_1-\varepsilon_1/6, h_2-\varepsilon_2/48\}+\delta_2/2\leq\max\{h_1, h_2\}-\delta_2/2 <f+\varepsilon 
$$
on $U$. Assume now that property $(3)$ is true for $j$, and let us see that it is also true for $j+1$. Indeed we have
\begin{eqnarray*}
& & g_{j+1} \leq \max\{g_j, \varphi_{j+1}\}+\frac{\delta_{j+1}}{2}\leq \max\{
\max\{h_1, ..., h_{j}\}-\frac{\delta_j}{2}, \, h_{j+1}-\delta_{j+1}\}+\frac{\delta_{j+1}}{2} \\
& & \leq  \max\{h_1, ..., h_{j+1}\}-\delta_{j+1}<f+\varepsilon
\end{eqnarray*}
on $U$. This shows $(3)$.
\end{proof}

\begin{claim}\label{estimate on Dg}
For every $j\in\N$ we have $\|Dg_{j}-Df\|\leq\varepsilon$ on $K_{n_{j+1}}$.
\end{claim}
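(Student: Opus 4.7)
The plan is to prove Claim \ref{estimate on Dg} by induction on $j$, combining a sharpening of Lemma \ref{estimate of first derivatives of smooth maxima} with the region decomposition of $K_{n_{j+2}}$ dictated by parts (1) and (2) of Claim \ref{estimates on g}.

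The first step is to observe that the explicit formula $M_\delta(x,y)=(x+y+\theta(x-y))/2$, together with the fact that $\theta$ is $1$-Lipschitz (so $|\theta'|\le 1$), gives for any differentiable $\varphi,\psi\colon V\to\R$ that the gradient $DM_\delta(\varphi,\psi)$ is a pointwise convex combination of $D\varphi$ and $D\psi$: explicitly,
$$
DM_\delta(\varphi,\psi)=\frac{1+\theta'(\varphi-\psi)}{2}\,D\varphi+\frac{1-\theta'(\varphi-\psi)}{2}\,D\psi,
$$
with both coefficients lying in $[0,1]$. This immediately yields the sharp pointwise estimate
$$
\|DM_\delta(\varphi,\psi)-Df\|\le\max\bigl(\|D\varphi-Df\|,\,\|D\psi-Df\|\bigr),
$$
which is the main tool driving the induction.

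With this in hand, I would proceed by induction. For the base case $j=1$ we have $g_1=\varphi_1$, and the triangle inequality together with the constructed bounds on $\|D\varphi_1-Dh_1\|$ (on $K_{n_2}$) and $\|Dh_1-Df\|$ (on $U$) delivers $\|Dg_1-Df\|\le\varepsilon$ on $K_{n_2}$. For the inductive step, assume $\|Dg_j-Df\|\le\varepsilon$ on $K_{n_{j+1}}$ and consider $g_{j+1}=M_{\delta_{j+1}}(g_j,\varphi_{j+1})$ on the compact set $K_{n_{j+2}}$. By Claim \ref{estimates on g}, this set decomposes naturally into three pieces reflecting which branch of the smooth maximum is active: on $K_{n_j}$ part (1) gives $g_{j+1}=g_j$, so the bound is inherited directly from the hypothesis; on $K_{n_{j+2}}\setminus K_{n_{j+1}}$ part (2) gives $g_{j+1}=\varphi_{j+1}$, and so the bound reduces to $\|D\varphi_{j+1}-Df\|\le\|D\varphi_{j+1}-Dh_{j+1}\|+\|Dh_{j+1}-Df\|<\tfrac{\varepsilon}{4}+\tfrac{\varepsilon}{4}=\tfrac{\varepsilon}{2}$ by the construction bounds; and on the transition region $K_{n_{j+1}}\setminus K_{n_j}$ the sharp bound from the first step gives $\|Dg_{j+1}-Df\|\le\max(\|Dg_j-Df\|,\|D\varphi_{j+1}-Df\|)\le\varepsilon$.

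The main obstacle is the bookkeeping of tolerances: one must carefully track how the factors $\tfrac{\varepsilon}{4}$, $\tfrac{1}{4}\inf_{K_{n_{j+1}}}\varepsilon$, and the nested compact exhaustion $K_{n_j}\subset K_{n_{j+1}}\subset K_{n_{j+2}}$ combine through the triangle inequality so that every bound stays below $\varepsilon$. Once the convex-combination identity for $DM_\delta$ is isolated, the rest of the argument becomes a clean geometric partitioning of $K_{n_{j+2}}$ according to which of $g_j$ or $\varphi_{j+1}$ is active in the smooth maximum, with the inductive hypothesis handling one extreme and the bounds on $\|D\varphi_{j+1}-Df\|$ handling the other.
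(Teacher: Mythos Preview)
Your proposal is correct and in fact cleaner than the paper's argument. The key difference is your observation that $DM_\delta(\varphi,\psi)$ is a pointwise \emph{convex combination} of $D\varphi$ and $D\psi$, which immediately yields the max-bound $\|DM_\delta(\varphi,\psi)-Df\|\le\max(\|D\varphi-Df\|,\|D\psi-Df\|)$. The paper instead uses Lemma~\ref{estimate of first derivatives of smooth maxima} as stated, which only gives $\|DM_\delta(\varphi,\psi)-\tfrac{1}{2}(D\varphi+D\psi)\|\le\tfrac{1}{2}\|D\varphi-D\psi\|$; after a triangle inequality this produces a \emph{sum} rather than a max, forcing the paper to work harder. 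Specifically, on the transition shell $K_{n_j}\setminus K_{n_{j-1}}$ the paper does not invoke an inductive hypothesis on $\|Dg_{j-1}-Df\|$, but instead exploits that $g_{j-1}=\varphi_{j-1}$ there (Claim~\ref{estimates on g}(2)) to reduce both ingredients of the smooth maximum to $\varphi$'s, and then sums four $\tfrac{1}{4}\varepsilon$-type terms to land exactly at $\varepsilon$. Your route sidesteps this bookkeeping entirely: once the max-bound is available, the inductive hypothesis handles $g_j$ on all of $K_{n_{j+1}}$ in one stroke, and the three-piece decomposition of $K_{n_{j+2}}$ falls out cleanly. What the paper's approach buys is that it stays strictly within the lemmas already recorded; what yours buys is a genuinely simpler argument that would also survive looser constants in the construction of the $h_j$ and $\varphi_j$.
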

\begin{proof}
On $K_{n_{j+1}}\setminus K_{n_j}$ we know by the preceding claim that $g_{j}=\varphi_j$, so we also have
$$
\|Dg_{j}-Df\|=\|D\varphi_{j}-Df\|\leq \|D\varphi_j-Dh_j\|+\|Dh_j-Df\|\leq\frac{1}{4}\min_{y\in K_{n_{j+1}}}\varepsilon(y) +\frac{1}{4}<\varepsilon.
$$
On $K_{n_{j}}\setminus K_{n_{j-1}}$ we have, using Lemma \ref{estimate of first derivatives of smooth maxima} and the above claim,
\begin{eqnarray*}
& & \|Dg_j-Df\|\leq\frac{1}{2}\|Dg_{j-1}-D\varphi_j\|+\frac{1}{2}\|Dg_{j}+D\varphi_{j}-2Df\| \\
& & \leq \frac{1}{2}\|Dg_{j-1}-D\varphi_j\|+ \frac{1}{2}\|Dg_{j-1}-Df\| + \frac{1}{2}\|Df-D\varphi_j\| \\
& & \leq \frac{1}{2}\|D\varphi_{j-1}-Dh_{j-1}\|+\frac{1}{2}\|Dh_{j-1}-Df\|+
\frac{1}{2}\|Df-D\varphi_j\|+\frac{1}{2}\|Dg_{j-1}-Df\|+ \frac{1}{2}\|Df-D\varphi_j\| 
\\
& & \leq \|D\varphi_{j-1}-Dh_{j-1}\|+\|Df-Dh_{j-1}\|+ \|D\varphi_{j}-Dh_{j}\|+
\|Df-Dh_{j}\| \\
& & \leq \frac{1}{4}\min_{y\in K_{n_j}}\varepsilon(y) +\frac{1}{4}\varepsilon +\frac{1}{4}\min_{y\in K_{n_{j+1}}}\varepsilon(y)+\frac{1}{4}\varepsilon.
\end{eqnarray*}
On $K_{n_1}=K_1$, we have $g_2=g_1=\varphi_1$, hence
$$
\|Dg_2-Df\|\leq \|Dg_1-Dh_1\|+\|Dh_1-Df\|\leq\frac{1}{4}\min_{y\in K_{n_2}}\varepsilon(y) +\frac{1}{4}\varepsilon\leq\varepsilon.
$$
By combining this with the above properties and an obvious induction argument (using property $(1)$ of the preceding claim), we deduce that $\|Dg_{j}-Df\|\leq\varepsilon$ on $K_{n_{j+1}}$ for every $j$.
\end{proof}

Let us finally define
$$
g(x)=\lim_{j\to\infty}g_j(x), \,\,\, x\in U.
$$
Since the $g_j\in C^{\infty}(U)$ are strongly convex and satisfy $g_{j+1}=g_{j}$ on $K_{n_j}$,  $K_{n_j}\subset \textrm{int}(K_{n_{j+1}})$  for every $j$, and $U=\bigcup_{j\in\N} K_{n_j}$, it is clear that $g$ is well defined, strongly convex, and of class $C^{\infty}(U)$. From Claim \ref{estimates on g} we see that
$$
f<g<f+\varepsilon \textrm{ on } U,
$$
and from Claim \ref{estimate on Dg} we deduce that
$$
\|Dg-Df\|\leq\varepsilon \textrm{ on } U.
$$
In order to obtain a real-analytic function $\psi$ with these properties, we consider
$$
\eta(x) :=\frac{1}{4}\min\{ g(x)-f(x), \|Dg(x)-Df(x)\|, \, f(x)+\varepsilon(x)-g(x), \, \min_{|v|=1} D^{2}g(x)(v)^2 \}, \,\,\, x\in U,
    $$
which defines a strictly positive continuous function on $U$, and we apply Whitney's approximation theorem to find a real-analytic function $\psi:U\to\R$ such
that
    $$
\max\{|\psi-g|, \|D\psi-Dg|, \|D^2\psi-D^2g|\}\leq \eta.
    $$
This implies that $f<\psi<f+\varepsilon$, $\|Df-Dg\|<\varepsilon$  and
$D^2\psi\geq\frac{1}{2}D^2g>0$, so $g$ is strongly convex too.  \qed

\medskip

Let us make one final remark. One can wonder if there are analogues of Theorem \ref{C1 fine result} for $C^k$ fine approximation with $k\geq 2$. Our methods cannot be employed to answer this question, due to the following fact: if $f, g$ are $C^2$ convex functions, then in general the second derivative of $M_{\delta}(f,g)$ blows up as $\delta$ goes to $0$.

\end{document}